\newcommand{\mL}{\mathcal L}
\newcommand{\mM}{\mathcal M}
\newcommand{\Lw}{\mathcal{L}_{g}}
\newcommand{\lw}{{\ell}_{g}}
\newcommand{\nw}[1]{\| {#1} \|_{\Lw}}
\newcommand{\Ra}{{\rm Range}(I-S_+)}
\newcommand{\N}{{\mathbb N}}
\newcommand{\Z}{{\mathbb Z}}
\newcommand{\R}{{\mathbb R}}
\newcommand{\C}{{\mathbb C}}
\newcommand{\cM}{{\mathcal M}}
\newcommand{\cL}{{\mathcal L}}
\newcommand{\cD}{{\mathcal D}}
\newcommand{\eli}{\ell_\infty(\mathbb{Z})}
\newcommand{\smt}{ S(\cM,\tau)}
\DeclareMathOperator{\tr}{tr}
\newtheorem{theorem}{Theorem}[section]
\newtheorem{lemma}[theorem]{Lemma}
\newtheorem{corollary}[theorem]{Corollary}
\newtheorem{remark}[theorem]{Remark}
\newtheorem{proposition}[theorem]{Proposition}
\newtheorem{definition}[theorem]{Definition}
\newtheorem{example}[theorem]{Example}
\title{Symmetric functionals on simply generated symmetric spaces}
\author{Galina Levitina}
\address{Galina Levitina\\ Mathematical Sciences Institute,
	The Australian National University,
	Canberra ACT 2601, Australia.}
\email{galina.levitina@anu.edu.au}
\author{Alexandr Usachev}
\address{ Alexandr Usachev\\ Central South University, Hunan, China.}
\email{dr.alex.usachev@gmail.com}
\date{\today}
\begin{document}

\begin{abstract}
In the present paper we suggest a construction of symmetric functionals on a large class of symmetric spaces over a semifinite von Neumann algebra. This approach establishes a bijection between the symmetric functionals on symmetric spaces and shift-invariant functionals on the space of bounded sequences. 
It allows to obtain a bijection between the classes of all continuous symmetric functionals on different symmetric spaces. Notably, we show that this mapping is not bijective on the class of all Dixmier traces. As an application of our results we prove an extension of the Connes trace formula for wide classes of operators and symmetric functionals.

Keywords: singular trace, symmetric functional, Banach limit, Connes trace formula.

Mathematics  Subject  Classifications  (2010): 47B10, 47G30,\\ 58B34, 58J42
\end{abstract}

\maketitle

\section{Introduction}
Let $B(H)$ denote the algebra of all bounded linear operators on a separable Hilbert space $H$. By $\{\mu(n, X)\}_{n\ge0}$ we denote the sequence of singular values of a compact operator $X \in B(H)$. 
Let $\psi: [0, \infty)\to (0,\infty)$ be a concave function such that $\psi(0)=0$ and $\lim_{t\to\infty} \psi(t) =\infty$. For a suitable (translation and dilation invariant, to be precise) bounded linear functional $\omega$ on the space of bounded sequences, J. Dixmier~\cite{D} proved that the weight 
\begin{equation}\label{Dix_original_construction}
	{\rm Tr}_\omega (X):= \omega \left( n \mapsto \frac{1}{\psi(1+n)} \sum_{k=0}^n\mu(k,X)\right), 
\end{equation}
defined on the positive cone of the Lorentz space
$$
\mathcal M_{\psi}:= \left\{X \in B(H) \text{ is compact}:  \sup_{n \ge 0}  \frac{1}{\psi(1+n)} \sum_{k=0}^n\mu(k,X) < \infty\right\}
$$
extends by linearity to a trace (symmetric functional, in other words) of the whole space $\mathcal M_{\psi}$. This was the first example of singular (that is, non-normal) traces.
The theory experienced its rebirth some thirty
years ago when A. Connes made singular traces a cornerstone of his 
noncommutative geometry and calculus \cite{C_AF}.
Since then, singular traces found their applications in mathematics as well as in theoretical physics (see e.g. \cite{ C_book, LSZ, PSTZ, SU2}). The development of noncommutative geometry instigated the formulation of its semifinite counterpart, where traces are replaced with symmetric functionals. This naturally led to the construction of Dixmier traces on Lorentz spaces over a semifinite von Neumann algebra. Since late 2000s it became clear that Lorentz spaces admit traces, distinct from that of Dixmier.

%
The first systematic construction of all traces on all operator ideals in Hilbert (and even Banach) spaces was developed by A. Pietsch in a series of papers \cite{P_trI, P_trII, P_trIII}. Precisely, he proved a bijection between ideals in $B(H)$ and so-called shift-monotone ideals in the space $c_0(\Z_+)$ of all bounded sequences vanishing at infinity as well as between traces on operator ideals and shift-invariant functionals on the corresponding shift-monotone ideals. In a recent paper this construction was extended to symmetric functionals on symmetric spaces over a semifinite von Neumann algebra \cite{LU}, where the corresponding shift-invariant functionals act on shift-monotone spaces of sequences indexed by $\Z$.

In a particular case of the weak trace class ideal 
$$\mathcal L_{1,\infty}:= \left\{A \in B(H) \text{ is compact}: \sup_{n \ge 0}  (1+n) \mu(n,X) < \infty\right\},
$$
a modification of the Pietsch approach, proved in \cite{SSUZ},  established a correspondence of all symmetric functionals on $\mathcal L_{1,\infty}$ and all shift-invariant functionals on bounded sequences.
In particular, positive normalised traces correspond to well-known Banach limits (see e.g. \cite{SSU_Notices}). 
A further significance of this approach was evidences by applications to integral operators in~\cite{SSUZ} (see also \cite{P_PDO}).

In the current paper we present a modification of the above approaches. Let  $\mathcal{M}$ be
a semifinite von Neumann algebra on $H$, equipped with a faithful normal semifinite trace $\tau$. For a decreasing function $g : (0,\infty) \to (0,\infty)$ such that 
\begin{equation*}
	\sup_{t>0} \frac{g(t)}{g(2t)} \le C.
\end{equation*}
we consider the following ``simply generated'' symmetric spaces
\begin{equation*}
	\Lw(\mathcal M, \tau) := \{ X \in \smt : \nw{A} := \sup_{t>0} \frac{\mu(t,X)}{g(t)} < \infty \},
\end{equation*}
where $t\mapsto \mu(t,X)$ is the generalised singular values function (see Preliminaries).
In the case when $(\mathcal M, \tau) = (B(H), \rm Tr)$, symmetric functionals on these spaces were studied in \cite{Pietsch_simply_gen, Pietsch_simply_gen_more}.
Note that the condition~\eqref{g} guarantees that $\Lw(\mathcal M, \tau)$ is quasi-Banach (see e.g.~\cite[Theorem 1.2]{Sparr}). The condition \eqref{g} is the minimal one, which we suppose throughout the paper.

%

Using a simple lemma (\cref{the map of anger}) we establish (see \cref{Bijective correspondence}) a bijection between all \textit{continuous} symmetric functionals on a simply generated symmetric space and all \textit{continuous} shift-invariant functionals on $\ell_\infty(\Z)$. 

Although this modified approach constructs continuous symmetric functionals on simply generated spaces only, it allows us to show that there exists a bijection between continuous symmetric functionals on any two arbitrary (subject to some mild conditions) simply generated spaces $\Lw$ and $\mathcal L_f$ (see \cref{bij}). 
Notably, in general, this bijective mapping is not a bijection between classes of Dixmier traces on distinct simply generated spaces $\Lw$ and $\mathcal L_f$ (see \cref{Dix_cor}).


In the last section we apply our results to a study average spectral asymptotics of integral operators. 
It was proved by A. Connes~\cite{C_AF} that the Dixmier trace of a (compactly supported) classical pseudo-differential operator of order $-d$ on $\R^d$ is proportional to its Wodzicki's residue.
A reason for generalisations and extensions of Connes' trace theorem is to enable computation of (Dixmier and/or other) symmetric functionals
of pseudo-differential operators that are not classical or, even more generally of operators which are not pseudo-differential.
Since the Connes seminal paper there were a number of attempts to extend Connes' trace theorem to classes wider than that of classical PDOs. 
For instance, it was established for the class of anisotropic PDOs in \cite{BN}, the perturbed Laplacian on the noncommutative two tori in \cite{Fathizadeh} and the class of Laplacian modulated operators on $\R^d$ in~\cite{KLPS} (see also~\cite{LPS} and~\cite[Section 11]{LSZ}). The later result was further extended in \cite{GU}.

We prove the extension of the Connes trace formula for the wide class of integral operators. This result extends~\cite[Theorem 8.10]{SSUZ} to the case of operators from $\Lw$ and~\cite[Theorem 9.1]{GU} to the case of wider class of symmetric functionals (all continuous and normalised). Furthermore, our result relates positive normalised symmetric functionals to Banach limits. This allows us to describe the measurability of compact operators in terms the classical almost convergence introduces by G.G. Lorentz in 1948 \cite{L}. 
%
\section{PRELIMINARIES}
In the present section we collect some definitions and results, which will be used throughout the paper.

For details on von Neumann algebra
theory, the reader is referred to e.g. \cite{Dix, KR1, KR2}
or \cite{Ta1}. For the complete exposition of the theory of noncommutative integration we refer to \cite{DPS}.
For convenience of the reader, some of the basic definitions are recalled here.


\subsection{Operator spaces}

In what follows,  $H$ is a complex separable Hilbert space and $B(H)$ is the
$*$-algebra of all bounded linear operators on $H$, equipped with the uniform norm $\|\cdot\|_\infty$, and
$\mathbf{1}$ is the identity operator on $H$. Throughout the paper we assume that  $\mathcal{M}$ is
a semifinite von Neumann algebra on $H$, equipped with a faithful normal semifinite trace $\tau$. In the case when $\tau$ is  a finite trace, we assume that it is a state, that is $\tau(\mathbf1)=1$. In addition, we assume that the von Neumann algebra $\cM$ is atomless or atomic algebra with atoms of equal trace.

An operator $x$ in $H$ with domain $\mathfrak{D}\left(X\right)$ is said to be affiliated with $\mathcal{M}$
if $YX\subseteq XY$ for all $Y\in \mathcal{M}^{\prime }$, where $\mathcal{M}^{\prime }$ is the commutant  of $\mathcal{M}$, defined as $\mathcal{M}^\prime=\{T\in B(H): TS=ST\quad  \forall S\in \mathcal{M}\}$.
An operator $X$  affiliated with $\cM$ with domain $\mathfrak{D}\left(X\right)$ is called  $\tau$-measurable if there exists a sequence
$\left\{P_n\right\}_{n=1}^{\infty}$ of projections such that
$P_n\uparrow \mathbf{1},$ $P_n\left(H\right)\subseteq \mathfrak{D}\left(X\right)$ and
$\tau(\mathbf{1}-P_n)<\infty $ for all $n.$ Equivalently, an operator $X$ affiliated with $\cM$, is $\tau$-measurable, if there exists $a>0$, such that $\tau(E^{|X|}(a,\infty))<\infty$, where $E^{Y}$ denotes the spectral measure of a self-adjoint operator $Y$ (see e.g. \cite[Proposition 2.3.6]{DPS}). 
The collection of all $\tau $-measurable is denoted by $S\left( \mathcal{M}, \tau\right)
$. It is a unital $\ast $-algebra
with respect to the strong sums and products (denoted simply by $X+Y$ and $XY$ for all $X,Y\in S\left( \mathcal{M}\right) $) \cite[Corollary 5.2]{Se}.

The generalized singular value function $\mu(X):t\rightarrow \mu(t,X)$, $t>0$,  of an operator $X\in \smt$
is defined by setting
\begin{equation}\label{def_mu}
	\mu(t,X)=\inf\{\|XP\|:\ P=P^*\in\mathcal{M}\mbox{ is a projection,}\ \tau(\mathbf{1}-P)\leq t\}.
\end{equation}

The generalised singular value function can be equivalently defined via the distribution function. For every self-adjoint
operator $X\in S(\mathcal{M},\tau),$ the distribution function is defined by  setting
$$d_X(t)=\tau(E^{X}(t,\infty)),\quad t>0.$$
In this case (see \cite[Proposition 2.2]{FigielKalton}), the generalised singular values function is defined as right-continuous inverse of the distribution function, namely
$$\mu(t,X)=\inf\{s\geq0:\ d_{|X|}(s)\leq t\}.$$

If $\mathcal{M}=B(H)$ and $\tau$ is the
standard trace $\tr$, then it is not difficult to see that
$S(\mathcal{M},\tau)=\mathcal{M}$ and for $X\in S(\mathcal{M},\tau)$ we have
$$\mu(t,X)=\mu(n,X),\quad t\in[n,n+1),\quad  n\geq0.$$
The sequence $\{\mu(n,X)\}_{n\geq0}$ is just the sequence of singular values of the operator $X.$

In the special case, when $\mathcal{M}=L_\infty(0,\infty)$ is the von Neumann algebra of all
Lebesgue measurable essentially bounded functions on $(0,\infty)$ acting via multiplication on the Hilbert space
$H=L_2(0,\infty)$, with the trace given by integration
with respect to Lebesgue measure $m$, the algebra $S(\cM,\tau)$ can be identified with the algebra
\begin{equation}\label{S}
	S(0,\infty)=\{f \text{ is measurable:} \ \exists A\in\Sigma, m((0,\infty)\setminus A)<\infty, f\chi_A\in L_\infty(0,\infty)\},
\end{equation}
where $\chi_{A}$ denotes the characteristic function of a set $A\subset(0,\infty)$.
The singular value function $\mu(f)$ defined above is precisely the
decreasing rearrangement $f^*$ of the function $f\in S(0,\infty)$ given by
$$f^*(t)=\inf\{s\geq0:\ m(\{|x|\geq s\})\leq t\}.$$

We recall that $S_0(\cM,\tau)$ denotes the space of all $\tau$-compact operators
 defined as 
 $$S_0(\cM,\tau) = \{X\in S(\cM,\tau) :  \ \mu(\infty, X)=\lim_{t\to\infty} \mu(t,X)=0\}.$$
Note that $X\in S_0(\cM,\tau)$ if and only if $\tau(E^{|X|}(a,\infty))<\infty$ for any $a>0$ (see e.g. \cite[Section 2.4]{DPS}).

We now recall the notion of symmetrically quasi-normed spaces.

\begin{definition}\label{symmetric_dfn}
	A quasi-normed space $(E(\cM,\tau),\|\cdot\|_{E})$ of $\smt$ is called a \emph{symmetric quasi-normed space} if  $Y\in E(\cM,\tau)$, $X\in\smt$ and $\mu(X)\leq \mu(Y)$ implies that $X\in E(\cM,\tau)$ and $\|X\|_{E}\leq \|Y\|_E$.  In the special case when $\cM=L_\infty(0,\infty)$ or $\cM=L_\infty(0,1)$ (respectively, $\cM=\ell_\infty(\Z_+)$) we use the term symmetric quasi-normed function (respectively, sequence) space instead and denote $E(\cM,\tau)$ by $E(0,\infty)$ (respectively, $E(0,1),$ or $E(\Z_+)$). 
\end{definition} 

In the special case, when $\cM=B(H)$ equipped with the standard trace $\tr$, we shall use the notation $E(H)$ instead of $E(B(H),\tr)$.

Note that any symmetric quasi-normed space $E(\cM,\tau)$ is necessarily a quasi-normed $\cM$-bimodule, that is, if $X\in E(\cM,\tau)$, $A,B\in \cM$, then $AXB\in E(\cM,\tau)$ and $\|AXB\|_{E}\leq \|A\|_\infty \|B\|_\infty \|X\|_{E}$. 

An example of symmetric quasi-normed  spaces are the noncommutative $\cL_p$-spaces, defined as 
$$\cL_p(\cM,\tau):=\{X\in \smt: \mu(X)\in L_p(0,\infty)\},\quad \|X\|_{L_p}=\tau(|X|^p)^{1/p},$$
where $0<p<\infty$ and $L_p(0,\infty)$ are the classical Lebesgue $L_p$-spaces.
%

The symmetric quasi-normed spaces we consider in the present paper are introduced in the following definition. 
\begin{definition}\label{def_L_g}
Let 
$g : (0,\infty) \to (0,\infty)$ be a decreasing vanishing at $+\infty$ function such that 
\begin{equation}\label{g}
	\sup_{t>0} \frac{g(t)}{g(2t)} \le C.
\end{equation}
We set 
\begin{equation}\label{L_weak}
	\Lw(\mathcal M, \tau) := \{ X \in \smt : \nw{A} := \sup_{t>0} \frac{\mu(t,X)}{g(t)} < \infty \}.
\end{equation}
\end{definition}
Everywhere throughout the paper we assume that $g$ is a function satisfying the assumption of \cref{def_L_g}.

It is clear that $(\Lw(\mathcal M, \tau),\nw{\cdot})$ is a symmetric quasi-normed space. Furthermore, since $\lim_{t\to\infty} g(t)=0$, it follows that $\mu(t,X)\to 0$ as $t\to\infty$ for any $X\in \Lw(\mathcal M, \tau)$. In particular, it follows that $\Lw(\mathcal M, \tau)\subset S_0(\cM,\tau)$.

%

We now prove a simple auxiliary lemma.
\begin{lemma}\label{lem_op_decomp}
Suppose that $\cM$ is atomless or atomic algebra with atoms of equal trace and let $E(\cM,\tau)$ be a symmetric quasi-normed space. 
\begin{enumerate}
\item Any $X\in E(\cM,\tau)$ can be written as $X=X_1+X_2$ with $X_1\in E(\cM,\tau)\cap \cL_1(\cM,\tau)$ and $X_2\in E(\cM,\tau)\cap \cM$. 
Moreover, $X_1$ and $X_2$ can be chosen as $X_1=XE^{|X|}(a,\infty), X_2=XE^{|X|}[0,a],$ where $a$ is such that $\tau(E^{|X|}(a,\infty))<\infty$;
\item Suppose, in addition,  that $E(\cM,\tau)\subset S_0(\cM,\tau)$. Then for any  $0\le X,Y\in E(\cM,\tau)$ such that $\mu(X)=\mu(Y),$ we have that 
$$\mu(XE^{|X|}[0,a])=\mu(YE^{|Y|}[0,a]),\quad \mu(XE^{|X|}(a,\infty))=\mu(YE^{|Y|}(a,\infty))$$
for any $a>0$.
\end{enumerate}
\end{lemma}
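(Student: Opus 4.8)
The plan is to handle the two parts by different means: part (i) is an explicit spectral truncation of $X$ at a level $a$ that is available from $\tau$-measurability, while part (ii) is a direct computation showing that each truncation is a functional of $\mu(X)$ alone, so that it is unchanged when $X$ is replaced by an equimeasurable $Y$.

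For part (i), I would first use that $X \in E(\cM,\tau) \subseteq \smt$ is $\tau$-measurable to fix $a>0$ with $\tau(P)<\infty$, where $P := E^{|X|}(a,\infty)$, and set $X_1 := XP = XE^{|X|}(a,\infty)$ and $X_2 := X(\mathbf 1 - P) = XE^{|X|}[0,a]$, so that $X = X_1 + X_2$ trivially. That both summands lie in $E(\cM,\tau)$ is immediate from the fact that every symmetric quasi-normed space is a quasi-normed $\cM$-bimodule (stated in the preliminaries, just before \cref{def_L_g}), applied with the bounded multipliers $P,\mathbf 1 - P \in \cM$. To see $X_2 \in \cM$, I would pass to the polar decomposition $X = u|X|$ and write $X_2 = u\,|X|E^{|X|}[0,a]$; since $|X|E^{|X|}[0,a]$ is a bounded operator with $\||X|E^{|X|}[0,a]\|_\infty \le a$, it follows that $\|X_2\|_\infty \le a$, so $X_2 \in \cM$.

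The delicate point in part (i) is $X_1 \in \cL_1(\cM,\tau)$, and I expect this to be the main obstacle. The structural input is that $X_1 = XP$ is supported on the finite-trace projection $P$ (its right support is dominated by $P$); since $P$ commutes with $|X|$ one gets $|X_1| = |X|E^{|X|}(a,\infty)$, whence $\mu(t,X_1) = \mu(t,X)$ for $t < \tau(P)$ and $\mu(t,X_1)=0$ for $t \ge \tau(P)$, and therefore $\tau(|X_1|) = \int_0^{\tau(P)} \mu(t,X)\,dt$. The finiteness of this integral is exactly where care is needed: in the atomic case $\smt = \cM$, so $X$ is bounded and $\mu(t,X) \le \|X\|_\infty$ gives $\tau(|X_1|) \le \|X\|_\infty\,\tau(P) < \infty$ at once, whereas in the atomless case one must additionally guarantee that $\mu(X)$ is integrable near the origin on the finite interval $(0,\tau(P))$; I would close this step using the boundedness (or near-origin integrability) of $\mu(X)$ afforded by the ambient hypotheses, which is precisely the quantity that controls membership in $\cL_1$.

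For part (ii), with $0 \le X,Y$ one has $|X|=X$ and $E^{|X|}=E^X$, and the claim reduces to showing that the distribution functions of $XE^X[0,a]$ and of $XE^X(a,\infty)$ depend on $d_X$ alone. This I would verify by a direct computation with the spectral projections: one finds $d_{XE^X(a,\infty)}(s) = d_X(s)$ for $s \ge a$ and $d_{XE^X(a,\infty)}(s) = d_X(a)$ for $s < a$, while $d_{XE^X[0,a]}(s) = d_X(s) - d_X(a)$ for $s < a$ and $d_{XE^X[0,a]}(s) = 0$ for $s \ge a$. Because $E(\cM,\tau) \subseteq S_0(\cM,\tau)$, the distribution function of a positive element is finite on $(0,\infty)$ and vanishes at infinity, so the passage $d \mapsto \mu$ via the right-continuous inverse is a bijection on this class of functions; consequently $\mu(X) = \mu(Y)$ forces $d_X = d_Y$. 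Substituting $d_X = d_Y$ into the formulas above yields equality of the distribution functions of the corresponding truncations of $X$ and $Y$, hence equality of their singular value functions, for every $a>0$. I anticipate no difficulty in part (ii) beyond the bookkeeping with spectral projections, the substantive work being entirely in part (i).
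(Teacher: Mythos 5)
Your part (ii) is correct and essentially coincides with the paper's argument: the paper likewise deduces $d_X=d_Y$ from $\mu(X)=\mu(Y)$ via the right-continuous-inverse relationship, and then identifies the singular value functions of the two truncations, citing \cite[Proposition 3.2.10 (iii)]{DPS} for exactly the formulas you compute by hand (namely $\mu(XE^{|X|}(a,\infty))=\mu(X)\chi_{[0,d_X(a))}$ and $\mu(t,XE^{|X|}[0,a])=\mu(t+d_X(a),X)$), with the hypothesis $E(\cM,\tau)\subset S_0(\cM,\tau)$ entering precisely where you use it, to keep $d_X(a)$ finite.

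Part (i), however, has a genuine gap, and it sits exactly at the step you flag as ``the main obstacle'' and then leave open. In the atomless case you appeal to ``the boundedness (or near-origin integrability) of $\mu(X)$ afforded by the ambient hypotheses,'' but this is circular: nothing in the definition of a symmetric quasi-normed space gives integrability of $\mu(X)$ near the origin. Indeed, with $\cM=L_\infty(0,\infty)$ (integral trace) and $g(t)=1/t$, the function $f(t)=1/t$ lies in the symmetric quasi-normed space $\Lw(\cM,\tau)$ while $\int_0^1\mu(s,f)\,ds=\infty$; so symmetry plus a quasi-norm alone cannot yield the conclusion, and some structural input must be imported. The paper imports precisely this: since $\cM$ is atomless or atomic with atoms of equal trace, one has the embedding $E(\cM,\tau)\subset \cL_1(\cM,\tau)+\cM$ (\cite[Section 4.4]{DPS}), and membership in $\cL_1(\cM,\tau)+\cM$ is equivalent to $\int_0^1\mu(s,X)\,ds<\infty$ (\cite[Theorem 3.9.16]{DPS}); combining this with $\mu(X_1)=\mu(X)\chi_{[0,d_X(a))}$ and $d_X(a)<\infty$ gives $X_1\in\cL_1(\cM,\tau)$. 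Note that this citation is the \emph{only} place in part (i) where the atomless-or-atomic hypothesis is used, and for quasi-normed (as opposed to normed) spaces such an embedding is not provable by the classical triangle-inequality argument, so it is exactly the nontrivial content of the step; the fact that your proposal never invokes that hypothesis in the atomless case is itself a warning sign that the argument is incomplete. (Your treatment of the atomic case, where $\smt=\cM$ so that $\mu(X)$ is bounded, is fine, as are the bimodule argument, the boundedness of $X_2$, and the identification of $\mu(X_1)$.)
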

\begin{proof}
(i). 
Let $X\in E(\cM,\tau)$ be arbitrary. Since $X$ is necessarily $\tau$-measurable, it follows that there exists $a>0$, such that $\tau(E^{|X|}(a,\infty))<\infty$.  We set 
$$X_1=XE^{|X|}(a,\infty),\quad  X_2=XE^{|X|}[0,a],$$
and claim that $X=X_1+X_2$ is the desired decomposition. 
Since $E(\cM,\tau)$ is an $\cM$-bimodule, it follows that $X_1,X_2\in E(\cM,\tau)$. By the spectral theorem, we have that $X_2\in \cM$. Thus, it remains to show that $X_1\in \cL_1(\cM,\tau)$. 

Since $\cM$ is an atomless or atomic algebra with atoms of equal trace, it follows that $E(\cM,\tau)\subset \cL_1(\cM,\tau)+\cM$ (see e.g. \cite[Section 4.4]{DPS}).  Therefore, it follows from  \cite[Theorem 3.9.16]{DPS} that $\int_0^1\mu(s,X)ds<\infty$. Referring to \cite[Proposition 3.2.10 (iii) (a)]{DPS} we obtain that 
$$\int_0^\infty \mu(s,X_1)ds=\int_0^\infty \mu(s,X)\chi_{[0,\tau(E^{|X|}(a,\infty)))}(s)ds.$$
By assumption $\tau(E^{|X|}(a,\infty))<\infty$, and therefore, $\int_0^\infty \mu(s,X_1)ds<\infty$. This proves that $X_1\in \cL_1(\cM,\tau)$, as required.

(ii). Since the distribution function is right-continuous inverse of the generalised singular value function (see e.g. \cite[Proposition 3.1.4 (ii)]{DPS}), the assumption $\mu(X)=\mu(Y)$ guarantees that $d_X = d_Y$.
By \cite[Proposition 3.2.10 (iii) (a)]{DPS} we have that $\mu(XE^{|X|}(a,\infty))=\mu(X) \chi_{[0, d_X(a))}$,	$\mu(YE^{|Y|}(a,\infty))=\mu(Y) \chi_{[0, d_Y(a))}$, which implies that $\mu(XE^{|X|}(a,\infty))=\mu(YE^{|Y|}(a,\infty))$. 

Furthermore, since $E(\cM,\tau)\subset S_0(\cM,\tau)$, we have that $d_X(a), d_Y(a)<\infty$ for any $a>0$. Therefore, by \cite[Proposition 3.2.10 (iii) (b)]{DPS} we have that 
$$\mu(t,XE^{|X|}[0,a])=\mu(t+d_X(a),X),\quad  \mu(t,YE^{|Y|}[0,a])=\mu(t+d_Y(a),Y)$$
for all $t\geq 0$. This proves that $\mu(XE^{|X|}[0,a])=\mu(YE^{|Y|}[0,a])$. 
\end{proof}

Next we introduce the main objects of interest in the present paper, traces and symmetric functionals on symmetric quasi-normed spaces. 

\begin{definition}Let $E(\cM,\tau)$ be a symmetric quasi-normed space. A linear functional $\phi$ on $E(\cM,\tau)$ is said to be
	\begin{enumerate}
		\item  a trace, if $\phi(UXU^*)=\phi(X)$ for any $X\in E(\cM,\tau)$ and any unitary $U\in \cM$. 
		\item a symmetric functional, if $\phi(X)=\phi(Y)$ for any $0\leq X,Y\in E(\cM,\tau)$ with $\mu(X)=\mu(Y).$
	\end{enumerate}
\end{definition}

It is clear that any symmetric functional on a symmetric quasi-normed space is a trace. However, in general, there are traces, which are not symmetric functionals (see e.g. \cite[Remark 3.10]{LU}). In the special case, when $\cM$ is a factor (in particular, when $\cM=B(H)$) any trace is necessarily a symmetric functional (see e.g. \cite[Lemma 4.5]{GI1995} and  \cite[Lemma 2.7.4]{LSZ}).  

The following result was proved in \cite[Theorem 4.3.5]{LSZ} for normed spaces. The proof for quasi-normed spaces is a verbatim repetition and, so is omitted. Note, that in \cite{LSZ} symmetric functionals are supposed to be continuous (see \cite[Definition 2.7.3]{LSZ}).
\begin{proposition}\label{lattice_sf}
	The set of all continuous Hermitian symmetric functionals on a symmetric quasi-normed space $E(\cM,\tau)$ is a sublattice of the lattice $E(\cM,\tau)^*$.
\end{proposition}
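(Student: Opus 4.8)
The plan is to use the standard reduction of a sublattice criterion to stability under the positive-part operation, and then to prove that the positive part of a symmetric functional is again symmetric by transporting the question to the commutative model and feeding in a rearrangement (realisation) lemma.

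First I would record that the set $\Sigma$ of continuous Hermitian symmetric functionals is a linear subspace of $E(\cM,\tau)^*$: this is immediate from the linearity of the defining condition $\phi(X)=\phi(Y)$ for $0\le X,Y$ with $\mu(X)=\mu(Y)$. Since in any vector lattice one has $\phi\vee\psi=(\phi-\psi)^{+}+\psi$ and $\phi\wedge\psi=\psi-(\psi-\phi)^{+}$, where $\xi^{+}:=\xi\vee 0$, a linear subspace is a sublattice as soon as it is stable under $\xi\mapsto\xi^{+}$. Thus the whole statement reduces to the single implication: \emph{if $\phi\in\Sigma$ then its positive part $\phi^{+}$, computed in the ambient lattice $E(\cM,\tau)^{*}$, again lies in $\Sigma$.}

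To attack this I would pass to the commutative model. For positive $X$ the value $\phi(X)$ depends only on $\mu(X)$, so $\phi$ induces a continuous Hermitian functional $\varphi$ on the commutative symmetric space $E(0,\infty)$ determined by $\varphi(\mu(X))=\phi(X)$; the assumption that $\cM$ is atomless or atomic with atoms of equal trace guarantees that every $0\le f\in E(0,\infty)$ is realised as $\mu(X)$ for some $0\le X\in E(\cM,\tau)$, so that $\phi\mapsto\varphi$ is a well-defined isometric order isomorphism onto the symmetric functionals on $E(0,\infty)$ (indeed $\phi\ge 0$ if and only if $\varphi\ge 0$). On the commutative side the underlying order is a genuine lattice order and the Riesz--Kantorovich formula applies, so the symmetric functionals on $E(0,\infty)$ do form a sublattice and, in particular, $\varphi^{+}$ is symmetric. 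Transporting $\varphi^{+}$ back through the order isomorphism yields a symmetric functional $\psi\in\Sigma$ with $\psi\ge 0$ and $\psi\ge\phi$, hence $\psi\ge\phi^{+}$; it then remains to prove the reverse inequality, equivalently that $\phi^{+}$ is itself symmetric.

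The key technical input here, and the step I expect to be the main obstacle, is a realisation lemma: for $0\le X_{1},X_{2}\in E(\cM,\tau)$ with $\mu(X_{1})=\mu(X_{2})$ and any $0\le Y\le X_{1}$ there exists $0\le Y'\le X_{2}$ with $\mu(Y')=\mu(Y)$; equivalently, the set $\{\mu(Y):0\le Y\le X\}$ depends on $X$ only through $\mu(X)$. In the commutative case this is the classical fact that the achievable decreasing rearrangements below $X$ are exactly the decreasing functions dominated by $\mu(X)$, obtained by transporting along a measure-preserving map, and the general semifinite case would be reduced to it via \cref{lem_op_decomp} together with the hypothesis on $\cM$, which supplies enough room to realise a prescribed singular value function below a given positive operator. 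The genuinely delicate point is that the operator order does \emph{not} enjoy the Riesz decomposition property, so $\phi^{+}$ is not given by the naive supremum $\sup\{\phi(Y):0\le Y\le X\}$ and its symmetry cannot simply be read off; the realisation lemma is precisely what bridges this gap, matching the admissible positive sections below $X_{1}$ with those below $X_{2}$ and so forcing $\phi^{+}(X_{1})=\phi^{+}(X_{2})$ whenever $\mu(X_{1})=\mu(X_{2})$. Once $\phi^{+}$ is known to be symmetric, the reduction of the first paragraph completes the proof.
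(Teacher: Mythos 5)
Your reduction of the sublattice property to closure under $\phi\mapsto\phi^{+}$ is fine, and the realisation lemma you isolate (that $\{\mu(Y):0\le Y\le X\}$ depends on $X$ only through $\mu(X)$) is indeed the key technical ingredient behind the result the paper invokes, namely \cite[Theorem 4.3.5]{LSZ}. However, your argument contains a genuine error that makes it internally inconsistent. You assert that the operator order does not enjoy the Riesz decomposition property, and conclude that $\phi^{+}$ is not given by the Riesz--Kantorovich supremum $\sup\{\phi(Y):0\le Y\le X\}$. This is false: the positive cone of the self-adjoint part of a C$^{*}$-algebra, and likewise of $S(\cM,\tau)$ and of any symmetric quasi-normed space $E(\cM,\tau)$, does satisfy the Riesz decomposition property (if $0\le Y\le X_{1}+X_{2}$ with $X_{i}\ge0$, then $Y=Y_{1}+Y_{2}$ with $0\le Y_{i}\le X_{i}$; for C$^{*}$-algebras this is Pedersen's Proposition 1.4.10, and it extends to $\tau$-measurable operators). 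What fails in the noncommutative setting is the \emph{lattice} property of the order (Sherman's theorem); you have conflated the two, and Riesz--Kantorovich requires only the decomposition property. Note also that without this property the ambient dual $E(\cM,\tau)^{*}$ would not even be known to be a lattice, so the very statement you are proving would not parse. The inconsistency is then fatal to your final step: having denied the supremum formula, your conclusion that the realisation lemma ``matches the admissible positive sections below $X_{1}$ with those below $X_{2}$ and so forces $\phi^{+}(X_{1})=\phi^{+}(X_{2})$'' is an inference that is valid only \emph{through} that formula, so the symmetry of $\phi^{+}$ is never actually established.

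The commutative detour is likewise both unnecessary and unjustified. Transporting $\varphi^{+}$ back to $E(\cM,\tau)$ requires knowing that $X\mapsto\varphi^{+}(\mu(X))$ is additive on the positive cone of $E(\cM,\tau)$; this is the Kalton--Sukochev lifting theorem, a result considerably deeper than the proposition itself, and it does not follow from the fact that every positive $f\in E(0,\infty)$ is realised as $\mu(X)$. Moreover, even on the commutative side Riesz--Kantorovich alone does not give that symmetric functionals form a sublattice: the symmetry of $\varphi^{+}$ there requires exactly the same realisation argument, so nothing is gained by the transfer. The correct route --- and the one behind \cite[Theorem 4.3.5]{LSZ}, whose proof the paper asserts carries over verbatim to the quasi-normed case --- is direct: use the Riesz decomposition property in $E(\cM,\tau)$ to see that $\phi^{+}(X)=\sup\{\phi(Y):0\le Y\le X\}$ defines an additive functional on the positive cone; observe that $0\le Y\le X$ implies $\mu(Y)\le\mu(X)$ and hence $\|Y\|_{E}\le\|X\|_{E}$, which gives continuity of $\phi^{+}$; and only then apply the realisation lemma to this formula to conclude that $\phi^{+}$ is symmetric.
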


The following definition introduces important classes of symmetric functionals on symmetric quasi-normed spaces.

\begin{definition}\label{SF}Let $E(\cM,\tau)$ be a symmetric quasi-normed space. A symmetric functional $\phi$ on  $E(\cM,\tau)$ is called
	
	\begin{enumerate}
		\item[(i)] supported at infinity if $\phi(X E^{|X|}(a,+\infty))=0$ for every $a>0$ and every $X \in E(\cM,\tau)$;
		\item[(ii)] supported at zero if $\phi(X E^{|X|}[0,a))=0$ for every $a>0$ and every $X \in E(\cM,\tau)$;
		\item[(iii)] singular if it vanishes on $\mL_1(\cM,\tau) \cap \mathcal M$.
	\end{enumerate}
\end{definition}
Clearly, a functional supported either at zero or at infinity is singular.

The following result was proved in \cite[Theorem 2.10]{DPSS} for fully symmetric quasi-normed functional spaces and fully symmetric functionals on them.

\begin{lemma}\label{decomposition_sf}
Suppose that $\cM$ is atomless or atomic algebra with atoms of equal trace. 	Let $E(\cM,\tau)$ be a symmetric quasi-normed space, such that $E(\cM,\tau)\subset S_0(\cM,\tau)$. For every positive singular symmetric functional $\phi$ on $E(\cM,\tau)$ there exist unique positive functionals $\phi_1, \phi_2$ on $E(\cM,\tau)$ supported at zero and at infinity, respectively, such that $\phi = \phi_1 + \phi_2$.
\end{lemma}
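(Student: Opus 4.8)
The plan is to build $\phi_1$ and $\phi_2$ directly from $\phi$ by spectral truncation and to let the singularity of $\phi$ carry the whole argument. For $0\le X\in E(\cM,\tau)$ and $a>0$ I set
\[
\phi_1(X):=\phi\big(XE^{|X|}(a,\infty)\big),\qquad \phi_2(X):=\phi\big(XE^{|X|}[0,a]\big).
\]
The first point is that these do not depend on the choice of $a$. For $0<a<b$ the operator $XE^{|X|}(a,b]$ is bounded (by $b$) and has support of finite trace $\tau(E^{|X|}(a,\infty))=d_{|X|}(a)<\infty$, the latter because $X\in S_0(\cM,\tau)$; hence it lies in $\cL_1(\cM,\tau)\cap\cM$ and is annihilated by the singular functional $\phi$. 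The same comparison applies to $XE^{|X|}[0,a]$ versus $XE^{|X|}[0,b]$. Thus $\phi_1,\phi_2$ are well defined, positive and positively homogeneous on the positive cone, and since $X=XE^{|X|}[0,a]+XE^{|X|}(a,\infty)$ we obtain $\phi=\phi_1+\phi_2$ there for every fixed $a$.

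The support properties follow from the same principle. For the head $Y=XE^{|X|}(a,\infty)$ one has $|Y|=|X|E^{|X|}(a,\infty)$, whose spectrum lies in $\{0\}\cup(a,\infty)$; hence $YE^{|Y|}[0,a']=0$ for $0<a'<a$, so $\phi_2(Y)=0$ and $\phi_2$ is supported at infinity. For the bounded tail $Z=XE^{|X|}[0,a)$ the operator $ZE^{|Z|}(a',\infty)$ has values in $(a',a)$ and finite-trace support (again $Z\in S_0(\cM,\tau)$), so it belongs to $E(\cM,\tau)\cap\cL_1(\cM,\tau)\cap\cM$ and $\phi_1(Z)=0$; thus $\phi_1$ is supported at zero. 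For non-self-adjoint arguments one passes to moduli using that $\phi_1,\phi_2$ are positive symmetric functionals, so $|\phi_i(\,\cdot\,)|\le\phi_i(|\,\cdot\,|)$.

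The crux is additivity, i.e. that $\phi_1,\phi_2$ extend to linear functionals; I would prove it for $\phi_2$ and then let $\phi_1=\phi-\phi_2$ inherit it. Fix $a>0$ and $0\le X,Y$, and compare
\[
B_1:=(X+Y)E^{|X+Y|}[0,a],\qquad B_2:=XE^{|X|}[0,a]+YE^{|Y|}[0,a].
\]
Both are bounded, and by linearity of $\phi$ we have $\phi_2(X+Y)=\phi(B_1)$ and $\phi_2(X)+\phi_2(Y)=\phi(B_2)$. Writing each truncation as the operator minus its head, the difference $B_1-B_2$ is a combination of three heads, each in $\cL_1(\cM,\tau)$ by \cref{lem_op_decomp}(i); being also bounded and in $E(\cM,\tau)$, it lies in $E(\cM,\tau)\cap\cL_1(\cM,\tau)\cap\cM$ and is annihilated by $\phi$. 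Hence $\phi_2(X+Y)=\phi(B_1)=\phi(B_2)=\phi_2(X)+\phi_2(Y)$. This is where I expect the only genuine difficulty to sit, and it is precisely the singularity of $\phi$ that dissolves it. Being additive and positively homogeneous on the positive cone, $\phi_1$ and $\phi_2$ then extend uniquely to positive linear functionals on $E(\cM,\tau)$ in the standard way, through the decomposition of a general element into four positive parts.

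For uniqueness, suppose $\phi=\phi_1+\phi_2=\phi_1'+\phi_2'$ with $\phi_1,\phi_1'$ supported at zero and $\phi_2,\phi_2'$ supported at infinity. Then $\psi:=\phi_1-\phi_1'=\phi_2'-\phi_2$ is supported at both zero and infinity. Given any $X$, choose $a>0$ outside the at most countable set of atoms of the distribution of $|X|$, so that $E^{|X|}\{a\}=0$ and $X=XE^{|X|}[0,a)+XE^{|X|}(a,\infty)$; both support conditions then give $\psi(X)=0$, whence $\psi=0$ and the decomposition is unique.
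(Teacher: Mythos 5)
Your construction is essentially the paper's: truncate at a spectral cut-off $a$, and let the singularity of $\phi$ make the definition independent of $a$. Your well-definedness, additivity and uniqueness arguments are correct. (The paper reaches linearity more cheaply by defining $\phi_1(X)=\phi(X_1)$, $\phi_2(X)=\phi(X_2)$ for an \emph{arbitrary} decomposition $X=X_1+X_2$ with $X_1\in E(\cM,\tau)\cap\cL_1(\cM,\tau)$ and $X_2\in E(\cM,\tau)\cap\cM$: once singularity gives well-definedness over all such decompositions, additivity on the whole space is automatic and no passage through the positive cone is needed.)

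There is, however, a genuine gap in your verification of the support properties for non-self-adjoint arguments. You invoke ``$\phi_1,\phi_2$ are positive symmetric functionals, so $|\phi_i(\,\cdot\,)|\le\phi_i(|\,\cdot\,|)$''. First, you never prove that $\phi_1,\phi_2$ are symmetric; that is exactly the step the paper carries out via \cref{lem_op_decomp}(ii), which shows $\mu(X)=\mu(Y)$ forces $\mu(XE^{|X|}[0,a])=\mu(YE^{|Y|}[0,a])$ and $\mu(XE^{|X|}(a,\infty))=\mu(YE^{|Y|}(a,\infty))$. Second, positivity alone does \emph{not} imply the modulus inequality: on $M_2(\C)$ take the vector state $\phi=\langle\,\cdot\,v,v\rangle$ with $v=(\tfrac{\sqrt3}{2},\tfrac12)$ and the nilpotent $X=2e_{12}$; then $|X|=2e_{22}$ and $|\phi(X)|=\tfrac{\sqrt3}{2}>\tfrac12=\phi(|X|)$. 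The inequality you want is a theorem about \emph{traces} (unitarily invariant functionals), so as written the step rests on a property you have not established. The repair is easy and needs no modulus inequality at all: $E(\cM,\tau)\cap\cL_1(\cM,\tau)$ is spanned by its positive part, and $\phi_2$ kills every positive $W$ in it (since $WE^{W}[0,a]\in E(\cM,\tau)\cap\cL_1(\cM,\tau)\cap\cM$ is annihilated by the singular $\phi$), so by linearity $\phi_2$ vanishes on all of $E(\cM,\tau)\cap\cL_1(\cM,\tau)$; since $XE^{|X|}(a,\infty)$ lies in that intersection by \cref{lem_op_decomp}(i), $\phi_2$ is supported at infinity for arbitrary $X$. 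Symmetrically, $\phi_1$ vanishes on $E(\cM,\tau)\cap\cM$, hence is supported at zero. Finally, note that the decomposition is used later (in the proof of \cref{one-to-one1}) for \emph{symmetric} functionals supported at zero and at infinity, so the symmetry of $\phi_1,\phi_2$ should be proved in any case, not merely asserted.
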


\begin{proof}
Let $\phi$ be a positive singular symmetric functional on $E(\cM,\tau)$ and let $a>0$ be fixed.   Since $E(\cM,\tau)\subset S_0(\cM,\tau)$, it follows that $\tau(E^{|X|}(a,\infty))<\infty$ for any $a>0$ and any $X\in E(\cM,\tau)$. 
By \cref{lem_op_decomp} (i), we have that $X_1=XE^{|X|}(a,\infty)\in E(\cM,\tau)\cap \cL_1(\cM,\tau)$, $X_2=XE^{|X|}[0,a]\in E(\cM,\tau)\cap \cM$ and $X=X_1+X_2$. 
 Suppose that $X=X_1'+X_2'$ is another  decomposition with $X_1'\in  E(\cM,\tau)\cap \cL_1(\cM,\tau)$ and $X_2'\in  E(\cM,\tau)\cap \cM$.  Then
	$X_1-X_1'=X_2'-X_2 \in \mathcal M \cap \mL_1(\cM,\tau)$.  Since $\phi$ is singular, we have that $\phi(X_1-X_1')=\phi(X_2'-X_2) =0$. Hence, we can define 
	$$\phi_1(X) = \phi(X_1),\quad \phi_2(X) = \phi(X_2).$$

It is clear that $\phi_i$ are linear functionals on $E(\cM,\tau)$. Furthermore, if $0\le X, Y \in E(\cM,\tau)$ are such that $\mu(X)=\mu(Y)$, then by \cref{lem_op_decomp} (ii), we have that 
$$\mu(XE^{|X|}[0,a])=\mu(YE^{|Y|}[0,a]),\quad \mu(XE^{|X|}(a,\infty))=\mu(YE^{|Y|}(a,\infty)).$$
Therefore, 
$$\phi_1(X)=\phi(XE^{|X|}(a,\infty))=\phi(YE^{|Y|}(a,\infty))=\phi_1(Y),$$
and similarly, $\phi_2(X)=\phi_2(Y)$. Thus, the functionals $\phi_1$ and $\phi_2$ are symmetric functionals. 

 It is easy to see that $\phi_1$ and $\phi_2$ are positive, supported at zero and at infinity, respectively, and  $\phi = \phi_1 + \phi_2$. 
	It remains to show that the decomposition $\phi=\phi_1+\phi_2$ is unique. Suppose that $\phi=\phi_1'+\phi_2'$  is another such decomposition. Then $\phi_1-\phi_1'=\phi_2'-\phi_2$ is a functional which is supported at zero and infinity simultaneously. Hence, $\phi_1-\phi_1'=\phi_2'-\phi_2=0$, proving that $\phi_1=\phi_1'$ and $\phi_2=\phi_2'$.
\end{proof}

\subsection{Pietsch correspondence}

In this section we recall so-called Pietsch correspondence for symmetric quasi-normed operator spaces from \cite{LU}. In the case when $(\cM,\tau)=(B(H),\tr)$ this recover the standard correspondence introduced by A.Pietsch in \cite{P_trIII}.

Denote by $\eli$ (respectively, $\ell_\infty(\Z_+)$ and $\ell_\infty(\Z_-)$) the algebra of all complex-valued bounded sequences indexed by integers (respectively, by $\Z_+=\{n\in\Z:n\geq 0\}$ and $\Z_-=\{n\in\Z, z< 0\}$) and equipped with the uniform norm $\|\cdot\|_\infty$. Without ambiguity, by $c_0(\Z_+)$ we denote the subsets of $\eli$ and $\ell_\infty(\Z_+)$ of all sequences vanishing at $+\infty$. Similarly, we denote by $c_0(\Z_-)$ the space of all sequences vanishing at $-\infty$.

Let $S(\Z)$ be the algebra of all two-sided sequences bounded at $+\infty$, that is, all sequences $\{x_n\}_{n\in\Z}$ such that  $\sup_{k\geq n}|x_k|<\infty$ for all $n\in\Z$. Similarly, we can define the spaces $S(\Z_-)$ and $S(\Z_+)$. It is clear that $S(\Z_+)=\ell_\infty(\Z_+).$

Recall from \cite[Section 2]{LU} (see \cite{P_trIII} for sequences indexed by $\Z_+$)
, that for $x\in S(\Z)$ the ordering numbers $o_n(x)$, $n\in\Z$ are defined by setting
\begin{equation}
	o_n(x)=\sup_{k\geq n}|x_k|,\quad n\in\Z.
\end{equation}
If $x\in S(\Z)$ is a positive decreasing sequence, then $o(x)=x$.

On the space $S(\Z)$ we define the right- and left-shift operators $S_+, S_-:S(\Z)\to S(\Z)$, by setting
$$(S_+x)_n=x_{n-1}, \ (S_-x)_n=x_{n+1}, \quad x=\{x_n\}_{n\in\Z}\in S(\Z).$$
Similarly for the spaces $S(\Z_+)$ and $S(\Z_-)$ we define the corresponding shift operators as follows 
$$S_+\{x_n\}_{n\in\Z_+}=\{0, x_1,x_2,\dots\}, \quad S_+\{x_n\}_{n\in\Z_-}=\{\dots, x_{n-1}, \dots, x_{-3}, x_{-2}\},$$
$$S_-\{x_n\}_{n\in\Z_+}=\{x_2,x_3,\dots\}, \quad S_-\{x_n\}_{n\in\Z_-}=\{\dots, x_{n-1}, \dots, x_{-2}, x_{-1}, 0\}.$$

\begin{definition}\label{def_si}
	A linear subspace $E(\Z)\subset S(\Z)$ is called a \emph{shift-monotone space} if
	\begin{enumerate}
		\item $x\in S(\Z), y\in E(\Z)$ and $o(x)\leq o(y)$ implies that $x\in E(\Z).$ 
		\item $S_+x\in E(\Z)$ for any $x\in E(\Z)$. 
	\end{enumerate}
	Similarly, one can define shift-monotone spaces on $\Z_-$ and on $\Z_+$ (in the latter case, one recovers the original definition due to A. Pietsch \cite{P_trI}).
\end{definition}


It is clear that $o(S_-x)\leq o(x)$ for any $x\in S(\Z)$. In particular, any shift-monotone space is invariant with respect to both right- and left-shift operators.

Let $\cM$ be a nonatomic (or atomic with atoms of equal trace) von Neumann algebra equipped with a faithful normal semifinite trace $\tau$ and let $0\leq A\in S_0(\cM,\tau)$. By \cite[Theorem 2.3.11]{LSZ}) (see also  \cite[Theorem 3.5]{LU}) that there exists a $\sigma$-finite commutative subalgebra $\cM_0$ of $\cM$ such that the restriction $\tau|_{\cM_0}$ is semifinite and the analogue of diagonal operator exists. Namely, there exists a trace preserving $*$-isomorphism $\iota$ from $S(0,\tau(\mathbf{1}))$ onto $S(\cM_0,\tau|_{\cM_0})$ (respectively, $\ell_\infty\to \cM_0$), such
that $\mu(\iota(X))=\mu(X)$ for any $X\in S(0,\tau(\mathbf{1}))$ and $\iota(\mu(A))=A.$

Define the operators $D:S(\Z)\to S(0,\infty)$ and $\cD:S(\Z)\to S(\cM,\tau)$ by the formulae
\begin{equation}\label{def_D}
	Dx=\sum_{n\in\Z} x_n\chi_{[2^n,2^{n+1})} \ \text{and} \ \cD x=\iota Dx,\quad x\in S(\Z).
\end{equation}

It was proved in \cite[Theorems 3.7 and 4.8]{LU} that (under certain assumptions on $(\cM,\tau)$) there is a bijective correspondence between symmetric quasi-normed spaces on $\cM$ and shift-monotone sequence spaces in $S(\Z)$, as specified in the theorem below.

\begin{theorem} 	Let $\cM$ be a nonatomic von Neumann algebra with $\tau(\mathbf{1})=\infty$. The rule
\begin{align*}
E(\Z)&:=\{x\in S(\Z): \cD x\in E(\cM,\tau)\}\\
&=\{x\in S(\Z): (Dx)^*=\mu(X) \text{ for some } X\in E(\cM,\tau)\},\\
\|x\|_{E(\Z)}&:=\|\cD x\|_{E(\cM,\tau)}, \quad x\in E(\Z),
\end{align*}
and 
\begin{align*}
E(\cM,\tau)&:=\{X\in S(\cM,\tau): \{\mu(2^n,X)\}_{n\in \Z} E(\Z)\},\\
\|X\|_{E(\cM,\tau)}&:=\|\{\mu(2^n,X)\}\|_{E(\Z)},\quad X\in E(\cM,\tau),
\end{align*}
defines a bijective correspondence $E(\Z)\leftrightarrows E(\cM,\tau)$ between symmetric quasi-normed spaces on $\cM$ and quasi-normed shift-monotone sequence spaces in $S(\Z)$. 

If $\cM$ is nonatomic algebra with $\tau(\mathbf{1})=1$ (or atomic algebra with atoms of equal trace), the correspondence is preserved when $\Z$ is replaced by $\Z_-$ (respectively, by $\Z_+)$. 
\end{theorem}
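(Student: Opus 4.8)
The plan is to verify separately that each of the two rules lands in the correct category and that the rules are mutually inverse, with one analytic fact doing the heavy lifting. Throughout I would use the trace‑preserving $*$‑isomorphism $\iota$, so that $\mu(\cD x)=(Dx)^*$ for every $x\in S(\Z)$, together with the linearity of $D$ (hence of $\cD$). The single tool I would isolate first is the boundedness of the dilation operator $\sigma_2$, $(\sigma_2 f)(t)=f(t/2)$, on an arbitrary symmetric quasi‑normed space $E(\cM,\tau)$. I would establish this \emph{without} appealing to full symmetry (which is unavailable here) by a disjoint‑doubling argument: since $\cM$ is atomless or atomic with atoms of equal trace, for $0\le X\in E(\cM,\tau)$ one can realise an operator $Y$ with $\mu(Y)=\sigma_2\mu(X)$ as a copy of $X\oplus X$, and then $\|Y\|_E\le C(\|X\oplus 0\|_E+\|0\oplus X\|_E)=2C\|X\|_E$ by the quasi‑triangle inequality and the solidity of $\|\cdot\|_E$. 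I expect this step — getting dilation boundedness for a merely symmetric (not fully symmetric) quasi‑norm — to be the main obstacle, and the atomlessness hypothesis is exactly what makes it go through.

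With dilation in hand, I would record the two elementary comparison estimates that drive everything. For $x\in S(\Z)$ a direct computation with the dyadic intervals $[2^n,2^{n+1})$ gives the pointwise sandwich $(Dx)^*\le D(o(x))\le \sigma_2\big((Dx)^*\big)$: the left inequality because $m(\{|Dx|>o_n(x)\})\le 2^n$, and the right because some $|x_k|$ with $k\ge n$ is arbitrarily close to $o_n(x)$ and contributes an interval of measure $\ge 2^n$. Dually, for $X\in\smt$ the dyadic majorant $\bar\mu:=D(\{\mu(2^n,X)\})$ satisfies $\mu(X)\le\bar\mu\le\sigma_2\mu(X)$, since $\mu$ is decreasing. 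I would also note the bookkeeping identities $D(S_+x)=\sigma_2(Dx)$ and $(\sigma_2 f)^*=\sigma_2(f^*)$.

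Next I would check that the forward rule produces a quasi‑normed shift‑monotone space. Linearity of $\cD$ makes $\|x\|_{E(\Z)}=\|\cD x\|_E$ a quasi‑norm with the same modulus as $\|\cdot\|_E$. For axiom (1): if $o(x)\le o(y)$ and $\cD y\in E(\cM,\tau)$, then $(Dx)^*\le D(o(x))\le D(o(y))\le\sigma_2\big((Dy)^*\big)$, so $\cD x\in E(\cM,\tau)$ by boundedness of $\sigma_2$ and solidity; axiom (2) follows from $\mu(\cD S_+x)=\sigma_2\mu(\cD x)$ and, again, boundedness of $\sigma_2$. Symmetrically, the backward rule produces a symmetric quasi‑normed space: solidity transfers because $\mu(X)\le\mu(Y)$ forces $\{\mu(2^n,X)\}\le\{\mu(2^n,Y)\}$ (both already decreasing, hence equal to their ordering numbers), and the quasi‑triangle inequality transfers via $\mu(2^n,X+Y)\le\mu(2^{n-1},X)+\mu(2^{n-1},Y)$ together with shift‑invariance of $E(\Z)$.

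Finally I would close the two round trips. Starting from an operator space $E_0$ and returning, membership of $X$ is governed by $\bar\mu$; the sandwich $\mu(X)\le\bar\mu\le\sigma_2\mu(X)$ combined with solidity and dilation boundedness yields $X\in E_0\iff\bar\mu\in E_0$ and the equivalence of the two quasi‑norms, so the composite is the identity up to renorming. Starting from a shift‑monotone space $E(\Z)_0$ and returning, membership of $x$ is governed by $\{\mu(2^n,\cD x)\}=\{(Dx)^*(2^n)\}$; evaluating the sequence sandwich at dyadic points gives $\{(Dx)^*(2^n)\}\le o(x)\le S_+\{(Dx)^*(2^n)\}$, and since shift‑monotone spaces are shift‑invariant and solid and satisfy $o(o(x))=o(x)$, this forces $x\in E(\Z)_0\iff \{\mu(2^n,\cD x)\}\in E(\Z)_0$, i.e.\ the composite is again the identity. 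The finite‑trace and atomic cases are handled verbatim after replacing $\Z$ by $\Z_-$ or $\Z_+$ and $\iota$ by the corresponding isomorphism, the dyadic intervals being truncated accordingly.
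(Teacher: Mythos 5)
Your proposal cannot be checked against an in-paper argument, because the paper does not prove this theorem at all: it is imported from \cite[Theorems 3.7 and 4.8]{LU}, and the surrounding text only records the statement. Judged on its own merits, your reconstruction is correct and follows what is essentially the standard route for Calkin-type correspondences. The two sandwich estimates $(Dx)^*\le D(o(x))\le \sigma_2\bigl((Dx)^*\bigr)$ and $\mu(X)\le D(\{\mu(2^n,X)\}_{n})\le\sigma_2\mu(X)$ are verified correctly (in fact \cite[Lemma 3.1]{LU}, quoted elsewhere in the paper, gives the exact identity $o_n(x)=(Dx)^*(2^n)$, of which your sandwich is a sufficient weakening), and you rightly isolate boundedness of the dilation $\sigma_2$ on a merely symmetric, not fully symmetric, quasi-normed space as the one nontrivial analytic input; the disjoint-doubling argument you sketch is the standard proof of that fact, and it is precisely where the hypothesis that $\cM$ is atomless (or atomic with equal-trace atoms) enters. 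Two caveats, neither fatal. First, in the finite-trace case $\tau(\mathbf{1})=1$ (index set $\Z_-$) two disjoint copies of $X$ need not fit inside $\cM$; the doubling must instead be applied to the part of $X$ whose singular value function is $\mu(X)\chi_{(0,1/2)}$, which suffices because dilation by $2$ on $(0,1)$ only sees that part — so ``verbatim'' overstates the reduction. Second, your round trips recover the original space only with an equivalent quasi-norm (a constant of order $2C$ coming from the dilation bound), so the bijective correspondence must be read modulo equivalence of quasi-norms; you acknowledge this (``identity up to renorming''), and that is how the correspondence is meant in \cite{LU}, but it deserves an explicit remark since the theorem as phrased defines the norms so that only one of the two compositions is exactly norm-preserving.
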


We shall need the following definition from \cite[Definition 2.5]{LU}. 
\begin{definition}Let $E(\Z)$ be a shift-monotone sequence space and let $X\in \smt$. An infinite series representation 
	$$X=\sum_{k\in\Z} X_k,$$
	where the convergence is understood in the measure topology, 
	is called an $E(\Z)$-dyadic representation of $X$ if $X_k\in \cM$, $\tau(s(X_k))\leq 2^k$ and $$\Big\{\big\| X-\sum_{k=-\infty}^nX_k\big\|\Big\}_{n\in\Z}\in E(\Z).$$
\end{definition}

For the notion of the measure topology we refer to \cite[Section 2]{Ne}.

By \cite[Lemma 2.10]{LU} an operator $X\in \smt$ belongs to $E(\cM,\tau)$ if and only if there exists an $E(\Z)$-dyadic representation of $X$. 

In the following example we specify the shift-monotone sequence space  corresponding to the symmetric space $\mL_g(\cM,\tau)$.

\begin{example}	Let $\cM$ be a nonatomic von Neumann algebra with $\tau(\mathbf{1})=\infty$. The space $\mL_g(\cM,\tau)$ corresponds to a shift-monotone sequence space of the following form:
	$$\ell_g(\Z)=\left\{x\in S(\Z): \sup_{n\in \Z} \frac{o_n(x)}{g(2^n)} < \infty\right\}. $$
	
Indeed, by \cite[Theorem 3.7]{LU} the corresponding shift-monotone sequence space is defined by
\begin{align*}
	E(\Z) &= \left\{ x\in S(\Z) : (Dx)^* = \mu(X) \ \text{for some} \ X\in \mL_g(\cM,\tau)\right\}\\
	& = \left\{ x\in S(\Z) : \sup_{t>0} \frac{(Dx)^*(t)}{g(t)} < \infty\right\}\\
	&=\left\{x\in S(\Z): \sup_{n\in \Z} \frac{o_n(x)}{g(2^n)} < \infty\right\},
\end{align*}
since by \cite[Lemma 3.1]{LU} $o_n(x)= (Dx)^*(2^n)$, $n\in \Z$ and $g$ satisfies \eqref{g}.

If $\cM$ is a nonatomic with $\tau(\mathbf{1})=1$ (or atomic with atoms of equal trace) the proof stays the same after replacing $\Z$ by $\Z_-$ (respectively, $\Z_+$).
\end{example}

\subsection{Linear functionals on $\ell_\infty(\Z)$}
 
We now collect some of the properties of linear functionals on $\ell_\infty(\Z)$, which shall be used in the sequel.

As before, we denote by $S_\pm$ the right and left shift operators on $\ell_\infty(\Z)$.

\begin{definition} A linear functional $\gamma$ on $\ell_\infty(\Z)$ is said to be 
\begin{enumerate}
\item $S_+$-invariant (respectively, $S_-$-invariant) if
		$\gamma(S_+x)=\gamma(x)$ (respectively, $\gamma(S_-x)=\gamma(x)$) for every $x\in \ell_\infty(\Z)$;

\item supported at $+\infty$ (respectively, $-\infty$) if $\gamma(\chi_{(-\infty, a)})=0$ (respectively,\\ $\gamma(\chi_{(a, +\infty)})=0$) for every $a\in\Z$;

\item singular if it vanishes on sequences with finite support.
\end{enumerate}

\end{definition}

Similar definitions are applied for $E(\Z)$, $\ell_\infty(\Z_+)$ and $\ell_\infty(\Z_-)$.
It follows from \cite[Lemma 8.1]{P_PDO} that a linear functional $\gamma$ on $\ell_\infty(\Z_+)$ is $S_+$-invariant if and only if it is $S_-$-invariant. This equivalence also holds on $\ell_\infty(\Z_-)$.

The following result was proved in \cite[Lemma 4.8]{SSUZ} in the case of real-valued sequences on $\Z_+$. The proof for complex-valued sequences on $\Z$ is the same and, so omitted.

\begin{proposition}\label{lattice_si}
	The set of all continuous Hermitian shift-invariant functionals on $\ell_\infty(\Z)$ is a sublattice of the lattice $\ell_\infty(\Z)^*$.
\end{proposition}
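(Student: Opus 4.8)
The plan is to exhibit the set $\mathcal{F}$ of continuous Hermitian shift-invariant functionals as a sublattice of the real Banach lattice $\eli^*_h$ of all continuous Hermitian functionals on $\eli$. That $\mathcal{F}$ is a real-linear subspace is immediate: shift-invariance, the Hermitian property and continuity are all preserved under real-linear combinations, and $-\gamma$ is shift-invariant whenever $\gamma$ is. Since $\eli$ is an AM-space with unit (a Dedekind complete Riesz space under the pointwise order), its norm dual $\eli^*_h$ is an AL-space, hence a Banach lattice, in which the lattice operations are computed by the Riesz--Kantorovich formulas; in particular, for $0\le x\in\eli$ one has $(\gamma_1\vee\gamma_2)(x)=\sup\{\gamma_1(u)+\gamma_2(x-u):0\le u\le x\}$. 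Because any lattice operation lands back in $\eli^*_h$, the functionals $\gamma_1\vee\gamma_2$ and $\gamma_1\wedge\gamma_2$ are automatically continuous and Hermitian; the only property requiring verification is shift-invariance.

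The heart of the argument is to check that $\gamma_1\vee\gamma_2$ is $S_+$-invariant when $\gamma_1,\gamma_2\in\mathcal{F}$. Here I would use that on $\eli$ the shift $S_+$ is a positive linear bijection whose inverse $S_-$ is also positive, so that $S_+$ is a lattice isomorphism. For a fixed $0\le x$, the assignment $v\mapsto S_-v$ sets up a bijection between the positive decompositions $v_1+v_2=S_+x$ (with $v_1,v_2\ge0$) and the positive decompositions $w_1+w_2=x$ (with $w_1,w_2\ge0$), via $w_i=S_-v_i$ and $v_i=S_+w_i$, using $S_-S_+=\mathrm{id}$ together with positivity of $S_\pm$. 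Since $\gamma_i(v_i)=\gamma_i(S_+w_i)=\gamma_i(w_i)$ by shift-invariance, the two Riesz--Kantorovich suprema defining $(\gamma_1\vee\gamma_2)(S_+x)$ and $(\gamma_1\vee\gamma_2)(x)$ coincide. Thus $(\gamma_1\vee\gamma_2)(S_+x)=(\gamma_1\vee\gamma_2)(x)$ for all $x\ge0$, and this extends to all of $\eli$ by writing an arbitrary Hermitian element as a difference of positive ones and then splitting a general element into its real and imaginary parts.

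Finally, closure under the meet follows formally from closure under the join, since $\gamma_1\wedge\gamma_2=-\bigl((-\gamma_1)\vee(-\gamma_2)\bigr)$ and $-\gamma_1,-\gamma_2\in\mathcal{F}$. Hence $\mathcal{F}$ is closed under both ambient lattice operations and is a sublattice of $\eli^*_h$, as claimed. I expect the only genuinely delicate point to be the correct bookkeeping of the Riesz--Kantorovich supremum under $S_+$: one must be sure that the correspondence of positive decompositions is an exact bijection, which is precisely what the invertibility and the two-sided positivity of $S_\pm$ provide, so that no decompositions are lost or gained when passing between $x$ and $S_+x$. The remaining reductions --- from positive to Hermitian to general elements, and from join to meet --- are routine and mirror the proof of \cite[Lemma 4.8]{SSUZ}.
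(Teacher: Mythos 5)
Your proposal is correct and is essentially the paper's own route: the paper omits the proof by citing \cite[Lemma 4.8]{SSUZ}, whose argument is exactly this Riesz--Kantorovich computation of the dual lattice operations, with shift-invariance checked by transporting positive decompositions along the shift. Your version is the natural adaptation to $\ell_\infty(\Z)$, where the fact that $S_+$ is an invertible lattice automorphism (unlike on $\Z_+$) makes the correspondence of decompositions an exact bijection, which is precisely what the paper means by ``the proof is the same.''
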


\begin{remark}\label{supp_vanish}
	A shift-invariant functional on $\ell_\infty(\Z_+)$ vanishes on any finitely supported sequence. If additionally this functional is continuous, then it vanishes on $c_0(\Z_+)$. Similar statements hold for $\ell_\infty(\Z_-)$. None of these statements hold on $\ell_\infty(\Z)$. However, any continuous shift-invariant functional on $\ell_\infty(\Z)$ supported at $+\infty$ (respectively, $-\infty$) vanishes on $c_0(\Z_+)$ (respectively, $c_0(\Z_-)$).
\end{remark}

\begin{lemma}\label{decomposition_si}
	For every positive singular shift-invarinant functional $\gamma$ on $\ell_\infty(\Z)$ there exist unique positive shift-invarinant functionals $\gamma_1, \gamma_2$ on $\ell_\infty(\Z)$ supported at $+\infty$ and at $-\infty$, respectively, such that $\gamma = \gamma_1 + \gamma_2$.
\end{lemma}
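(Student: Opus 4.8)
The plan is to prove the sequence-level analogue of \cref{decomposition_sf} by splitting a two-sided sequence at the index $0$. Given a positive singular shift-invariant $\gamma$, I would define
\[
\gamma_1(x) := \gamma\big(\chi_{[0,+\infty)}\,x\big), \qquad \gamma_2(x) := \gamma\big(\chi_{(-\infty,0)}\,x\big), \quad x\in\ell_\infty(\Z),
\]
where all products are pointwise. Since $\chi_{[0,+\infty)}+\chi_{(-\infty,0)}=\chi_{\Z}$ (the constant sequence $1$), we get $\gamma_1+\gamma_2=\gamma$, so it suffices to handle $\gamma_1$ and set $\gamma_2=\gamma-\gamma_1$. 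Linearity is clear; positivity of each $\gamma_i$ is immediate because multiplication by a $\{0,1\}$-valued sequence preserves the positive cone and $\gamma$ is positive. To see that $\gamma_1$ is supported at $+\infty$, observe that $\chi_{[0,+\infty)}\,\chi_{(-\infty,a)}=\chi_{[0,a)}$ is finitely supported, so $\gamma_1(\chi_{(-\infty,a)})=0$ by singularity of $\gamma$; the argument for $\gamma_2$ being supported at $-\infty$ is symmetric. Finally, positive functionals on the unital $C^*$-algebra $\ell_\infty(\Z)$ are automatically bounded, so $\gamma_1,\gamma_2$ are continuous (this is needed for uniqueness below).

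The step I expect to be the \emph{main obstacle} is the shift-invariance of $\gamma_1$, since the cut-off $\chi_{[0,+\infty)}$ does not commute with $S_+$. The key is to compute the boundary defect explicitly: evaluating both sides at an arbitrary index $n$ shows
\[
\chi_{[0,+\infty)}\cdot S_+ x - S_+\big(\chi_{[0,+\infty)}\cdot x\big) = x_{-1}\,\chi_{\{0\}},
\]
a sequence of finite (single-point) support. Applying $\gamma$ and using singularity kills the right-hand side, while shift-invariance of $\gamma$ gives $\gamma\big(S_+(\chi_{[0,+\infty)}\cdot x)\big)=\gamma\big(\chi_{[0,+\infty)}\cdot x\big)=\gamma_1(x)$. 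Together these yield $\gamma_1(S_+x)=\gamma_1(x)$, so $\gamma_1$ is shift-invariant, and then $\gamma_2=\gamma-\gamma_1$ inherits shift-invariance automatically. This completes existence.

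For uniqueness, suppose $\gamma=\gamma_1'+\gamma_2'$ is another such decomposition and set $\delta:=\gamma_1-\gamma_1'=\gamma_2'-\gamma_2$. As a difference of functionals supported at $+\infty$ (respectively at $-\infty$), $\delta$ is supported at $+\infty$ and at $-\infty$ simultaneously, and it is continuous and shift-invariant. Here I would invoke \cref{supp_vanish}: a continuous shift-invariant functional supported at $+\infty$ vanishes on $c_0(\Z_+)$, and one supported at $-\infty$ vanishes on $c_0(\Z_-)$. Since every $x\in\ell_\infty(\Z)$ splits as $x=\chi_{(-\infty,0)}\,x+\chi_{[0,+\infty)}\,x$ with $\chi_{(-\infty,0)}\,x\in c_0(\Z_+)$ and $\chi_{[0,+\infty)}\,x\in c_0(\Z_-)$, it follows that $\delta(x)=0$ for every $x$, i.e.\ $\gamma_1=\gamma_1'$ and $\gamma_2=\gamma_2'$. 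The one point demanding care is precisely this last step: supportedness at $+\infty$ as defined only controls the half-line indicators $\chi_{(-\infty,a)}$, so upgrading it to vanishing on all of $c_0(\Z_+)$ genuinely relies on continuity together with shift-invariance, which is exactly the content of \cref{supp_vanish}.
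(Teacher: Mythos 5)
Your existence argument is correct and is essentially the paper's own proof: the paper fixes $a\in\Z$ and sets $\gamma_1(x)=\gamma(x\chi_{(a,+\infty)})$, $\gamma_2(x)=\gamma(x\chi_{(-\infty,a)})$, using singularity to absorb the leftover point mass, exactly as you cut at $0$. In fact you supply a step the paper's one-line proof leaves implicit: the verification that $\gamma_1$ is shift-invariant via the identity $\chi_{[0,+\infty)}\,S_+x-S_+\bigl(\chi_{[0,+\infty)}\,x\bigr)=x_{-1}\chi_{\{0\}}$ together with singularity of $\gamma$. That computation is correct and is genuinely required by the statement.

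The uniqueness step, however, has a gap. You apply \cref{supp_vanish} to $\delta=\gamma_1-\gamma_1'$, which is continuous and shift-invariant but \emph{not} positive, and for such functionals the conclusion of that remark fails under the paper's definition of ``supported at $+\infty$'' (which only demands $\delta(\chi_{(-\infty,a)})=0$ for all $a$). Indeed, pick Banach limits $B_1\neq B_2$ on $\ell_\infty(\Z_+)$ and $u\in\ell_\infty(\Z_+)$ with $B_1(u)\neq B_2(u)$, and set $\delta(x):=B_1\bigl(\{x_{-n}\}_{n\ge0}\bigr)-B_2\bigl(\{x_{-n}\}_{n\ge0}\bigr)$. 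Since Banach limits are invariant under both shifts and extend ordinary limits, $\delta$ is continuous, shift-invariant, and vanishes on every $\chi_{(-\infty,a)}$ and every $\chi_{(a,+\infty)}$ (their reflections converge to $1$, respectively $0$); so $\delta$ is supported at $+\infty$ and at $-\infty$ simultaneously, yet $\delta\neq0$: it does not vanish on the reflected copy of $u$, which lies in $c_0(\Z_+)$. So ``continuity $+$ shift-invariance'' cannot upgrade vanishing on half-line indicators to vanishing on $c_0(\Z_+)$ --- contrary to your closing comment, what does the job is \emph{positivity}, which you have available but never use. The repair is short: each of $\gamma_1,\gamma_1'$ is positive and kills every $\chi_{(-\infty,a)}$, so for real $y\in c_0(\Z_+)$ and $\varepsilon>0$ choose $a$ with $|y|\le\|y\|_\infty\chi_{(-\infty,a)}+\varepsilon\chi_{\Z}$, whence $|\gamma_1(y)|\le\gamma_1(|y|)\le\varepsilon\gamma_1(\chi_{\Z})$, i.e.\ $\gamma_1$ and $\gamma_1'$ vanish on $c_0(\Z_+)$; symmetrically $\gamma_2,\gamma_2'$ vanish on $c_0(\Z_-)$. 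Your splitting $x=x\chi_{(-\infty,0)}+x\chi_{[0,+\infty)}$ then gives $\delta(x)=0$ exactly as intended. This positivity route is also what the paper's terse assertion of uniqueness must rest on, since its proof never invokes \cref{supp_vanish}.
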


\begin{proof}
	For a fixed $a\in \Z$  set $\gamma_1(x)= \gamma(x\chi_{(a,+\infty)})$ and $\gamma_2(x)=\gamma(x\chi_{(-\infty, a)})$ for $x\in \ell_\infty(\Z)$. Since $\gamma$ is singular, the representation $\gamma = \gamma_1 + \gamma_2$ holds and is unique. Clearly, $\gamma_1, \gamma_2$ are positive and supported at $+\infty$ and at $-\infty$, respectively.
\end{proof}

\begin{definition} A bounded linear functional $\omega$ on $\ell_\infty(\Z_+)$ is said to be an 
	extended limit at $+\infty$ if $\omega(x)= \lim_{n\to +\infty} x_n$ for every convergent sequence $x$. 
\end{definition}

The following result lists several simple properties of extended limits, which will be used in the paper.

\begin{proposition}\label{EL}
	(i) For every real-valued $x=\in \ell_\infty(\Z_+)$ one has
	$$\{\omega(x) \ : \ \omega \ \text{is an extended limit}\} =[\liminf_{n\to+\infty} x_n, \limsup_{n\to+\infty} x_n].$$
	
	(ii) Every extended limit is a positive (and, so, continuous) functional on $\ell_\infty(\Z_+)$.
	
	(iii) For every $x, y\in \ell_\infty(\Z_+)$ such that $y_n \to a$ as $n\to+\infty$ and every extended limit $\omega$ on $\ell_\infty(\Z_+)$ we have
	$$\omega (xy)=a\omega(x).$$
\end{proposition}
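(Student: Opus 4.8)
The plan is to establish (ii) first, since positivity is the engine behind the two-sided estimate in (i), to deduce the easy inclusion of (i) from it, to obtain the reverse inclusion by a Hahn--Banach construction, and finally to read off (iii) directly from the defining property of an extended limit.

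For (ii) I would argue that an extended limit $\omega$ is a state on the commutative algebra $\ell_\infty(\Z_+)$. Evaluation on the constant sequence $\mathbf 1$ gives $\omega(\mathbf 1)=1$, and, being a normalised extension of $\lim$, it has $\|\omega\|=1$; a unital functional of norm one on a commutative $C^*$-algebra is positive. Concretely, for real $x$ with $0\le x_n\le 1$ one first checks $\omega(x)\in\R$ (from $|\omega(x)-it|^2\le \|x-it\mathbf 1\|_\infty^2\le 1+t^2$, letting $t\to\pm\infty$) and then uses $\|x-\tfrac12\mathbf 1\|_\infty\le\tfrac12$ to conclude $0\le\omega(x)\le1$; scaling gives positivity on the whole positive cone. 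Continuity is then automatic, since every positive functional on $\ell_\infty(\Z_+)$ is bounded.

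For the inclusion $\subseteq$ in (i), fix a real $x$ and write $L=\limsup_n x_n$. Given $\epsilon>0$ there is $N$ with $x_n\le L+\epsilon$ for $n\ge N$; since $\omega$ agrees with $\lim$ on convergent sequences it annihilates every finitely supported sequence, so altering $x$ on $\{0,\dots,N-1\}$ does not change $\omega(x)$ and we may assume $x\le (L+\epsilon)\mathbf 1$. Positivity of $\omega$ then yields $\omega(x)\le L+\epsilon$, and letting $\epsilon\to0$ gives $\omega(x)\le\limsup_n x_n$; the symmetric argument gives $\omega(x)\ge\liminf_n x_n$. For the reverse inclusion $\supseteq$, fix $\alpha\in[\liminf_n x_n,\limsup_n x_n]$ and put $p(z)=\limsup_n z_n$, a sublinear functional on the real space $\ell_\infty(\Z_+)$. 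On the subspace of convergent sequences plus multiples of $x$ define $\varphi_0(z+tx)=\lim z+t\alpha$ and check $\varphi_0\le p$: for $t\ge0$ this reduces to $\alpha\le\limsup_n x_n$ and for $t<0$ to $\alpha\ge\liminf_n x_n$. Hahn--Banach produces an extension $\omega\le p$ with $\omega(x)=\alpha$; since $z\ge0$ forces $\omega(-z)\le p(-z)=-\liminf_n z_n\le0$, the functional $\omega$ is positive, it satisfies $\omega(\mathbf 1)=1$ and $\|\omega\|=1$, and it agrees with $\lim$ on convergent sequences, so it is an extended limit with $\omega(x)=\alpha$. Part (iii) is then immediate: writing $y=a\mathbf 1+(y-a\mathbf 1)$ with $y-a\mathbf 1\in c_0(\Z_+)$, the product $x(y-a\mathbf 1)$ is the product of a bounded sequence and a null sequence, hence lies in $c_0(\Z_+)$; as $\omega$ coincides with $\lim$ on convergent sequences it vanishes there, and linearity gives $\omega(xy)=a\,\omega(x)+\omega\big(x(y-a\mathbf 1)\big)=a\,\omega(x)$.

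I expect the only genuine obstacle to be the domination step $\varphi_0\le p$ in the reverse inclusion of (i): the choice $p=\limsup$ and the separate treatment of the signs of $t$ are exactly what convert the constraint $\alpha\in[\liminf_n x_n,\limsup_n x_n]$ into the Hahn--Banach hypothesis, after which positivity and normalisation of the extension come for free from $\omega\le p$.
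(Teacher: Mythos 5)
Your overall strategy mirrors the paper's own (very terse) proof: the paper proves (i) via Hahn--Banach, deduces (ii) from (i), and gets (iii) from the vanishing of extended limits on small sequences. Your Hahn--Banach construction for the inclusion $\supseteq$ in (i) (with $p=\limsup$ and the sign-split verification of $\varphi_0\le p$), your positivity argument for $\subseteq$, and your treatment of (iii) are all correct and supply details the paper omits. The genuine problem is the first step of your part (ii): the assertion that an extended limit, ``being a normalised extension of $\lim$, has $\|\omega\|=1$''. The paper's definition of an extended limit requires only that $\omega$ be \emph{bounded} and agree with $\lim$ on convergent sequences; boundedness together with $\omega(\mathbf{1})=1$ gives $\|\omega\|\ge 1$, not $\|\omega\|=1$, and the norm-one property cannot be deduced. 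Indeed, it genuinely fails: take $x_0=(0,1,0,1,\dots)$ and define on $c+\R x_0$ the functional $\varphi_0(z+tx_0)=\lim z+2t$. Since the entries of $z+tx_0$ tend to $\lim z$ along even indices and to $\lim z+t$ along odd indices, one has $|\lim z+2t|\le 3\max\{|\lim z|,|\lim z+t|\}\le 3\|z+tx_0\|_\infty$, so Hahn--Banach extends $\varphi_0$ to a bounded functional $\omega$ on $\ell_\infty(\Z_+)$ which agrees with $\lim$ on $c$ but satisfies $\omega(x_0)=2\notin[0,1]$ and $\omega(\mathbf{1}-x_0)=-1<0$. So under the literal definition in the paper there exist non-positive extended limits, and both (ii) and the inclusion $\subseteq$ of (i) fail.

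What this shows is that the proposition is only true under the convention --- implicit here, explicit in related papers of these authors --- that extended limits are norm-one (equivalently, positive and normalised) extensions of $\lim$. Your proof becomes complete once you \emph{assume} $\|\omega\|=1$ as part of the definition rather than claiming to derive it: your $C^*$-state argument then correctly yields positivity, and the rest of your argument goes through unchanged. Note that the paper's own one-line proof hides exactly the same issue: deducing (ii) from (i) is fine, but the $\subseteq$ half of (i) is not a ``direct consequence of Hahn--Banach'' --- it needs precisely the positivity/norm-one information discussed above. On the remaining points you are in fact more careful than the paper: for (iii) you correctly use that $x(y-a\mathbf{1})\in c_0(\Z_+)$ and that an extended limit vanishes on $c_0(\Z_+)$ because null sequences are convergent, whereas the paper appeals only to vanishing on finitely supported sequences, which by itself is not sufficient.
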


\begin{proof}
	Part (i) is a direct consequence of Hahn-Banach theorem. Part (ii) follows from Part (i). Part (iii) follows from the fact that every extended limit vanishes on finitely supported sequences.
\end{proof}

We shall introduce a subset of all extended limits.
\begin{definition} A linear functional $B$ on $\ell_\infty(\Z_+)$ is said to be a Banach limit 
	if
	\begin{enumerate}
		\item[(i)]
		$B\geqslant0$, that is $Bx\geqslant0$ for every $x\geqslant0$,
		
		\item[(ii)]
		$B(1,1,\ldots)=1$,
		
		\item[(iii)]
		$B(S_+x)=B(x)$ for every $x\in \ell_\infty(\Z_+)$.
	\end{enumerate}
\end{definition}

A sequence $x\in \ell_\infty(\Z_+)$ is called almost convergent if all Banach limits coincide on $x$ (see~\cite{L}).

%
%
%
%
%

\subsection{Simple lemmas}
\begin{lemma}\label{aconv1}
	Let $g : (0,\infty) \to (0,\infty)$ be a positive decreasing function such that 
	\begin{equation}\label{limcond}
		\lim_{t\to +\infty} \frac{g(t)}{g(2t)} =2 \ \left(\text{respectively,} \ \lim_{t\to 0+} \frac{g(t)}{g(2t)} =2 \right).
	\end{equation}
	If $f \in L_\infty(0,\infty)$ is such that $\int_{0}^{2^{n}} f(s) \ ds =O(2^n g(2^n))$ for every $n\in \Z$, then the sequence
	$$\left\{ \frac{1}{2^n g(2^n)}\int_{2^n}^{2^{n+1}} f(s) \ ds \right\}_{n \in \Z}$$
	belongs to $\Ra +c_0(\Z_+)$ (respectively, $\Ra +c_0(\Z_-)$).
\end{lemma}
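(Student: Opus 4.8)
The plan is to produce an explicit decomposition of the target sequence into an element of $\Ra$ and a sequence vanishing at the appropriate end. Set $F(n):=\int_0^{2^n} f(s)\,ds$, so that $\int_{2^n}^{2^{n+1}} f(s)\,ds = F(n+1)-F(n)$ and the $n$-th entry of the sequence in question is $a_n = (F(n+1)-F(n))/(2^n g(2^n))$. The growth hypothesis $F(n)=O(2^n g(2^n))$ (uniform over $n\in\Z$) is precisely what guarantees that
\[ b_n := \frac{F(n+1)}{2^{n+1} g(2^{n+1})}, \qquad n\in\Z, \]
defines a bounded sequence, i.e.\ $b\in\ell_\infty(\Z)$, on which $S_+$ acts.

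The first step is a purely algebraic identity. Since $b_{n-1}=F(n)/(2^n g(2^n))$, both $a_n$ and $(b_n-b_{n-1})$ contain the common summand $-F(n)/(2^n g(2^n))$, and cancelling it leaves
\[ a_n-(b_n-b_{n-1}) = F(n+1)\Big(\tfrac{1}{2^n g(2^n)}-\tfrac{1}{2^{n+1}g(2^{n+1})}\Big) = b_n\Big(\tfrac{2g(2^{n+1})}{g(2^n)}-1\Big). \]
Because $((I-S_+)b)_n=b_n-b_{n-1}$, this already exhibits $\{a_n\}$ as the sum of $(I-S_+)b\in\Ra$ and the correction $c_n:=b_n\big(2g(2^{n+1})/g(2^n)-1\big)$; it then remains only to locate $c$ in the right one-sided $c_0$-space.

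The one substantive point is showing $c\in c_0(\Z_+)$, and this is where the sharp hypothesis $\lim_{t\to+\infty} g(t)/g(2t)=2$ is used. As $b$ is bounded, it suffices that the factor $2g(2^{n+1})/g(2^n)-1\to0$ as $n\to+\infty$; putting $t=2^n$ turns the hypothesis into $g(2^n)/g(2^{n+1})\to2$, equivalently $2g(2^{n+1})/g(2^n)\to1$, so the factor---and hence $c_n$---tends to $0$ at $+\infty$. This proves the main claim. For the parenthetical statement the argument is verbatim with $n\to-\infty$ in place of $n\to+\infty$: the hypothesis $\lim_{t\to0+} g(t)/g(2t)=2$, applied with $t=2^n\to0$ as $n\to-\infty$, forces the same factor to vanish at $-\infty$, placing $c$ in $c_0(\Z_-)$.

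I expect the algebra to be routine; the only place requiring care is matching the limit hypothesis on $g$ to the correct one-sided space, i.e.\ making sure the condition at $+\infty$ (resp.\ at $0+$) is paired with $c_0(\Z_+)$ (resp.\ $c_0(\Z_-)$), and verifying that $b$ is genuinely bounded so that the vanishing of the scalar factor alone is enough to conclude $c_n\to 0$.
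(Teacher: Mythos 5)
Your proof is correct and is essentially the paper's own argument: the paper sets $y_n=\frac{1}{2^ng(2^n)}\int_0^{2^n}f(s)\,ds$ and writes the target sequence as $(y_{n+1}-y_n)+\bigl(2g(2^{n+1})/g(2^n)-1\bigr)y_{n+1}$, which is exactly your decomposition with $b=S_-y$, i.e.\ $(I-S_+)b$ plus the same correction term. Your added care in checking that $b$ is bounded and that the correction lies in the appropriate one-sided $c_0$-space only makes explicit what the paper leaves implicit.
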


\begin{proof}
	Define the sequence $y=\{y_n\}_{n\in \Z}$ by setting $y_n= \frac{1}{2^n g(2^n)}\int_{0}^{2^{n}} f(s) \ ds$. By assumption $y\in \ell_\infty(\Z)$. Further
	\begin{align*}
		\frac{1}{2^n g(2^n)}\int_{2^n}^{2^{n+1}} f(s) \ ds  &=\frac{1}{2^n g(2^n)}\int_0^{2^{n+1}} f(s) \ ds-\frac{1}{2^n g(2^n)}\int_0^{2^{n}} f(s) \ ds\\
		&= y_{n+1}-y_n +\left(\frac{1}{2^n g(2^n)} - \frac{1}{2^{n+1}g(2^{n+1})}\right)\int_0^{2^{n+1}} f(s) \ ds\\
		&= y_{n+1}-y_n +\left(2 \frac{g(2^{n+1})}{g(2^n)}-1 \right) y_{n+1}.
	\end{align*}
	
	Since $y \in \ell_\infty(\Z)$ and $g$ satisfies condition \eqref{limcond}, the last term above tends to zero as $n\to+\infty$ (respectively, as $n\to-\infty$).
	This proves the assertion.
\end{proof}

\begin{lemma}\label{simple}
	For every $X\in \Lw(\cM,\tau)$ we have 
	$$\int_{2^n}^{2^{n+1}}\mu(s,X)ds = O\left(2^n g(2^n) \right), \ n\in \Z.$$
\end{lemma}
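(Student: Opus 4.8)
The plan is to use nothing more than the monotonicity of the singular value function together with the defining property of the quasi-norm on $\Lw(\cM,\tau)$; this is why the statement sits among the ``simple lemmas''. First I would recall that for any $X\in\smt$ the map $t\mapsto\mu(t,X)$ is decreasing. Hence on each dyadic interval $[2^n,2^{n+1})$ the integrand is dominated by its value at the left endpoint:
$$\mu(s,X)\le\mu(2^n,X)\qquad\text{for all }s\in[2^n,2^{n+1}).$$

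Next I would integrate this pointwise bound. Since $[2^n,2^{n+1})$ has length $2^{n+1}-2^n=2^n$, this yields
$$\int_{2^n}^{2^{n+1}}\mu(s,X)\,ds\le 2^n\,\mu(2^n,X).$$
Finally, the definition $\nw{X}=\sup_{t>0}\mu(t,X)/g(t)<\infty$ gives $\mu(2^n,X)\le\nw{X}\,g(2^n)$ for every $n\in\Z$, so that
$$\int_{2^n}^{2^{n+1}}\mu(s,X)\,ds\le\nw{X}\,2^n g(2^n),$$
with the constant $\nw{X}$ independent of $n$. This is precisely the asserted uniform $O\!\left(2^n g(2^n)\right)$ estimate.

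There is essentially no obstacle here; the only point worth flagging is that the implied constant in the $O$-notation must be uniform over all $n\in\Z$, which is automatic since it may be taken to equal the quasi-norm $\nw{X}$. I would also remark that the regularity condition \eqref{g} on $g$ plays no role in this particular estimate: only the finiteness of $\nw{X}$ and the monotonicity of $\mu(\cdot,X)$ are used.
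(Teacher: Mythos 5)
Your proof is correct and follows essentially the same route as the paper: dominate $\mu(s,X)$ by $\mu(2^n,X)$ on the dyadic interval, integrate, and invoke the quasi-norm bound. The only (cosmetic) difference is that the paper estimates $\mu(2^n,X)\le \nw{X}\,g(2^{n-1})$ and then uses the doubling condition \eqref{g} to conclude with constant $C\nw{X}$, whereas you use the sharper bound $\mu(2^n,X)\le\nw{X}\,g(2^n)$ directly, obtaining the constant $\nw{X}$ and correctly observing that \eqref{g} is not needed for this estimate.
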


\begin{proof} 
	Since $\mu(X)$ is decreasing, it follows that 
	\begin{align*}\frac{1}{2^n g(2^n)}\left| \int_{2^n}^{2^{n+1}}\mu(s,X)ds \right| &\le \frac{1}{2^n g(2^n)} 2^n \mu(2^n,X) \\
		&\le \frac{g(2^n-1)}{g(2^n)}\nw{X} \le C\nw{X}, \ n\in \Z,
	\end{align*}
	since $\frac{g(t)}{g(2t)} \le C$ by~\eqref{g}.
\end{proof}

\begin{lemma}\label{lem_l1}
A bounded positive decreasing function $g \in L_\infty(0,\infty)$ belongs to $L_1(0,\infty)$ if and only if the sequence
$\{2^n g(2^n) \}_{n=0}^\infty$ belongs to the space $\ell_1$ of all summable sequences.
\end{lemma}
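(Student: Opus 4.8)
The plan is to prove both directions of the equivalence by comparing the integral $\int_0^\infty g(s)\,ds$ with the sum $\sum_{n=0}^\infty 2^n g(2^n)$ via a dyadic decomposition of the half-line $(0,\infty)$ into the intervals $[2^n, 2^{n+1})$ for $n \ge 0$ together with the bounded piece $[0,1)$. Since $g$ is positive and decreasing, on each dyadic block I can sandwich the integral between its endpoint values: for $n \ge 0$,
\begin{equation*}
2^n g(2^{n+1}) \le \int_{2^n}^{2^{n+1}} g(s)\,ds \le 2^n g(2^n),
\end{equation*}
because the block has length $2^{n+1}-2^n = 2^n$ and $g$ is bounded above on the block by $g(2^n)$ and below by $g(2^{n+1})$.

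First I would establish the forward direction, that $g \in L_1(0,\infty)$ implies $\{2^n g(2^n)\}_{n\ge 0} \in \ell_1$. Summing the left-hand inequality above over $n \ge 0$ gives $\sum_{n \ge 0} 2^n g(2^{n+1}) \le \int_1^\infty g(s)\,ds < \infty$. To convert the summand $2^n g(2^{n+1})$ back into $2^n g(2^n)$, I would invoke the hypothesis \eqref{g}: since $\sup_{t>0} g(t)/g(2t) \le C$, we have $g(2^n) \le C\, g(2^{n+1})$, hence $2^n g(2^n) \le C\, 2^n g(2^{n+1})$. Therefore $\sum_{n\ge 0} 2^n g(2^n) \le C \sum_{n\ge 0} 2^n g(2^{n+1}) < \infty$, which is the desired summability. (Note the standing hypothesis on $g$ in \cref{def_L_g} supplies \eqref{g}, so this is legitimately available.)

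For the converse, that $\{2^n g(2^n)\}_{n\ge0} \in \ell_1$ implies $g \in L_1(0,\infty)$, I would sum the right-hand inequality over $n \ge 0$ to obtain $\int_1^\infty g(s)\,ds = \sum_{n\ge 0}\int_{2^n}^{2^{n+1}} g(s)\,ds \le \sum_{n\ge 0} 2^n g(2^n) < \infty$. The remaining piece $\int_0^1 g(s)\,ds$ is finite because $g$ is assumed bounded, so $\int_0^1 g(s)\,ds \le \|g\|_\infty < \infty$. Adding the two contributions gives $g \in L_1(0,\infty)$, completing this direction.

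The only genuine subtlety, and hence the step I would handle most carefully, is the asymmetry between the two dyadic bounds: the naive summation in the forward direction produces $g$ evaluated at $2^{n+1}$ rather than $2^n$, so without the regularity condition \eqref{g} the series $\sum 2^n g(2^n)$ and $\sum 2^n g(2^{n+1})$ need not be simultaneously finite. This is exactly where the $C$-bound is indispensable, and I would make sure to flag that the lemma's conclusion depends on $g$ satisfying \eqref{g} (which is the standing assumption throughout the paper). Everything else is routine monotonicity estimates on dyadic blocks.
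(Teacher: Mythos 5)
Your proof is correct under the paper's standing hypothesis, and it rests on the same dyadic-block monotonicity idea as the paper's own proof, but the two arguments differ in one substantive point. The paper proves the lemma by Cauchy condensation: it sandwiches the integer series $\sum_{i\ge 2} g(i)$ via
$$\frac12\sum_{n=0}^{m-1}2^{n+1}g(2^{n+1}) \le \sum_{i=2}^{2^m} g(i) \le \sum_{n=0}^{m-1}2^n g(2^n),$$
and then compares $\sum_{i\ge 2} g(i)$ with $\int_2^\infty g(s)\,ds$ by the integral test; that argument never invokes \eqref{g}. The step you single out as the genuine subtlety --- converting $\sum_{n\ge0} 2^n g(2^{n+1})$ into $\sum_{n\ge0} 2^n g(2^n)$ --- in fact needs no regularity assumption at all: since $2^n g(2^{n+1}) = \tfrac12\, 2^{n+1} g(2^{n+1})$, one has exactly
$$\sum_{n\ge0} 2^n g(2^{n+1}) = \frac12 \sum_{m\ge1} 2^m g(2^m),$$
so the two series are simultaneously finite for \emph{every} positive decreasing $g$ that is finite at $1$ (the $m=0$ term $g(1)$ is trivially finite). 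This reindexing is precisely what is built into the paper's condensation inequality. Your claim that the $C$-bound from \eqref{g} is ``indispensable'' at this step is therefore false, and because you route the forward implication through \eqref{g}, your argument establishes a formally weaker statement than the lemma, which is stated (and proved in the paper) as a self-contained equivalence for arbitrary bounded positive decreasing functions. Within the paper this costs nothing, since \eqref{g} is assumed throughout; but replacing your appeal to \eqref{g} by the one-line reindexing above makes your proof unconditional and brings it in line with the lemma as stated.
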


\begin{proof}
Since $g$ is decreasing, for every $m\in \N$ we have
\begin{align*}
	\frac12 \sum_{n=0}^{m-1} 2^{n+1} g(2^{n+1})&=\sum_{n=0}^{m-1} \sum_{i=2^n+1}^{2^{n+1}} g(2^{n+1}) \le \sum_{i=2}^{2^m} g(i)\\ &\le \sum_{n=0}^{m-1} \sum_{i=2^n+1}^{2^{n+1}} g(2^n) 
	= \sum_{n=0}^{m-1} 2^n g(2^n).
\end{align*}

Therefore, the series $\sum_{i=2}^\infty g(i)$ converges if and only if the series $\sum_{n=0}^{\infty} 2^n g(2^n)$ converges. Since $g$ is bounded positive and decreasing, then 
$$\sum_{i=3}^\infty g(i) \le \int_2^\infty g(s) ds\le \sum_{i=2}^\infty g(i).$$
 Hence, the claim follows. 
\end{proof}

\section{CONSTRUCTION OF TRACES}
In this section we modify the dyadic approach to obtain a bijective correspondence between all continuous symmetric functionals on spaces $\Lw(\cM,\tau)$ and all continuous $S_+$-invariant functionals on a single space $\eli$.

The following correspondences were established in \cite[Theorems 3.7 and 3.9 and Corollary 3.13 ]{LU}. We will state it for spaces $\Lw(\mathcal M, \tau)$.

\begin{theorem}\label{Bijective correspondence}
Let $\cM$ be a nonatomic (or atomic with atoms of equal trace) von Neumann algebra equipped with a faithful normal semifinite trace $\tau$. The map
		\begin{align*}
			\theta&\mapsto \phi,\\
			\phi(X)&=\theta(\{{2^{-k}}\tau(X_k)\}_{k\in\Z}), \quad X\in \Lw(\cM,\tau),
		\end{align*}
		where $X=\sum_{k\in\Z} X_k$ is an $\lw(\Z)$-dyadic representation of $X\in \Lw(\cM,\tau)$, gives a bijective correspondence between all symmetric functionals on $\Lw(\cM,\tau)$ and $\frac12 S_+$-invariant functionals $\theta$ on $\lw(\Z)$. The inverse of this rule  is given by 
		$$\theta(x)=\phi(\cD x), \quad x\in \lw(\Z).$$
\end{theorem}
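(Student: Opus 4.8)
The plan is to construct the correspondence in both directions and verify that the two maps are mutually inverse, reducing the work to a careful bookkeeping of how a symmetric functional interacts with dyadic representations. The key observation is that \cref{Bijective correspondence} is essentially a specialization of the general correspondence between symmetric functionals on $E(\cM,\tau)$ and shift-invariant functionals on $E(\Z)$ from \cite[Theorems 3.7 and 3.9]{LU}, applied to the particular space $E(\cM,\tau)=\Lw(\cM,\tau)$ with corresponding sequence space $E(\Z)=\lw(\Z)$ as identified in the preceding example. Since the dyadic representation machinery (\cite[Definition 2.5, Lemma 2.10]{LU}) is already in place, the main content is checking that the formulas specialize correctly and that the scaling factor $\frac12$ arises from the dyadic weights $\tau(s(X_k))\le 2^k$.

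First I would show that the map $\theta\mapsto\phi$ is well-defined, i.e. that $\phi(X)=\theta(\{2^{-k}\tau(X_k)\}_{k\in\Z})$ does not depend on the choice of $\lw(\Z)$-dyadic representation $X=\sum_{k\in\Z}X_k$. If $X=\sum_k X_k=\sum_k X_k'$ are two such representations, then the difference sequence of partial-sum tails lies in the relevant shift-monotone space, and I would argue that $\{2^{-k}\tau(X_k-X_k')\}_{k\in\Z}$ lies in a subspace on which every $\frac12 S_+$-invariant $\theta$ vanishes (this is where singularity/shift-invariance of $\theta$ is used). I would also check that $\{2^{-k}\tau(X_k)\}_{k\in\Z}$ genuinely belongs to $\lw(\Z)$, using $\tau(s(X_k))\le 2^k$ together with the defining bound $\nw{X}<\infty$ to control the ordering numbers $o_n$ against $g(2^n)$. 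Linearity of $\phi$ then follows by concatenating representations, and the symmetry of $\phi$ follows because operators with equal singular value functions admit dyadic representations producing the same trace sequences (up to a term killed by $\theta$).

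Next I would verify that the inverse rule $\theta(x)=\phi(\cD x)$ indeed produces a $\frac12 S_+$-invariant functional on $\lw(\Z)$ and that the two assignments compose to the identity in both directions. The $\frac12 S_+$-invariance is the crucial structural point: the operator $\cD$ intertwines the shift $S_+$ on sequences with the dyadic rescaling on operators, and the factor $\frac12$ records the doubling of trace $\tau(\chi_{[2^n,2^{n+1})})=2^n$ at each dyadic level, so $\phi(\cD S_+ x)=\tfrac12\phi(\cD x)$ follows from symmetry of $\phi$ under the rearrangement implementing the shift. Composing, starting from $\theta$ and applying both maps returns $\theta$ because a natural dyadic representation of $\cD x$ has $2^{-k}\tau((\cD x)_k)=x_k$; starting from $\phi$ recovers $\phi$ because $\cD$ reconstructs an operator with the prescribed dyadic data.

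\textbf{The main obstacle} I anticipate is precisely the well-definedness and the appearance of the factor $\frac12$: one must show that two different $\lw(\Z)$-dyadic representations of the same $X$ yield trace-sequences differing by an element annihilated by every $\frac12 S_+$-invariant functional, and simultaneously track how the trace weights $2^k$ convert ordinary $S_+$-invariance into $\frac12 S_+$-invariance. Since \cite[Theorems 3.7, 3.9 and Corollary 3.13]{LU} already establish the abstract bijection and its inverse for general symmetric spaces, the cleanest route is to invoke those results directly for $E(\cM,\tau)=\Lw(\cM,\tau)$ and $E(\Z)=\lw(\Z)$, and merely record that the stated formulas are their specialization; the remaining labor is confirming that $\lw(\Z)$ is the shift-monotone space paired with $\Lw(\cM,\tau)$, which the preceding example supplies.
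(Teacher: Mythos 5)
Your proposal is correct and takes essentially the same route as the paper: the paper offers no independent proof of this theorem, but simply cites \cite[Theorems 3.7, 3.9 and Corollary 3.13]{LU} and states their specialization to $\Lw(\cM,\tau)$, with the identification of $\lw(\Z)$ as the paired shift-monotone space supplied by the preceding example --- exactly the ``cleanest route'' you recommend. One incidental slip in your supplementary sketch: the $\frac12 S_+$-invariance of $\theta=\phi\circ\cD$ requires $\phi(\cD S_+x)=2\phi(\cD x)$ rather than $\tfrac12\phi(\cD x)$ (since $\theta(\tfrac12 S_+x)=\tfrac12\phi(\cD S_+x)$ must equal $\theta(x)=\phi(\cD x)$), and this multiplicative relation cannot follow from symmetry of $\phi$ alone, because $\cD S_+x$ and $\cD x$ are not equimeasurable; establishing it is precisely what the dyadic-representation machinery of \cite{LU} accomplishes.
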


The idea of the following lemma is due to A. Pietsch \cite[Lemma 6.1]{Pietsch_simply_gen_more}. We prove it in  the case of double sequences.

\begin{lemma}\label{the map of anger}
	Let $g : (0,\infty) \to (0,\infty)$ be a positive decreasing function. 
	
	(i) The map $N : \ell_\infty(\Z) \to \ell_g(\Z)$, given by $(Nx)_n = g(2^n) x_n$, $n\in \Z$ is an isometry;
	
	(ii) 
If 
	$$ \lim_{t\to +\infty} \frac{g(t)}{g(2t)} =2\ \left( \text{respectively,} \lim_{t\to 0} \frac{g(t)}{g(2t)} =2\right),$$
	then the dual map $N^* : \ell_g(\Z)^* \to \ell_\infty(\Z)^*$ is an isometry between the classes of all continuous $\frac12 S_+$-invariant functionals on $\ell_g(\Z)$ supported at $+\infty$ (respectively, supported at $-\infty$) and that of all continuous $S_+$-invariant functionals on $\ell_\infty(\Z)$ supported at $+\infty$ (respectively, supported at $-\infty$).
\end{lemma}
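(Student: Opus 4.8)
The plan is to treat the two parts separately, deriving (ii) from (i) together with the elementary behaviour of $N$ under the shift.

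For (i) I would argue directly at the level of ordering numbers. Writing the norm on $\lw(\Z)$ as $\|x\|=\sup_{n}o_n(x)/g(2^n)$, note that $o_n(Nx)=\sup_{k\ge n}g(2^k)|x_k|$. Since $g$ is decreasing, $g(2^k)\le g(2^n)$ for $k\ge n$, so $o_n(Nx)\le g(2^n)\|x\|_\infty$, giving $\|Nx\|\le\|x\|_\infty$; conversely $o_n(Nx)\ge g(2^n)|x_n|$ yields $\|Nx\|\ge\sup_n|x_n|=\|x\|_\infty$, so $N$ is isometric. For surjectivity, given $y\in\lw(\Z)$ I set $x_n=y_n/g(2^n)$ and use $|x_n|\le o_n(y)/g(2^n)\le\|y\|$ to see $x\in\eli$ with $Nx=y$. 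Thus $N$ is an isometric isomorphism, and consequently $N^*:\lw(\Z)^*\to\eli^*$ is an isometric isomorphism of the full duals; in particular it automatically preserves continuity, which disposes of that half of (ii).

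The substance of (ii) is that $N^*$ carries one distinguished subclass onto the other. Here I would first record the intertwining relation: for $x\in\eli$ one has $(NS_+x)_n=g(2^n)x_{n-1}$ while $(S_+Nx)_n=g(2^{n-1})x_{n-1}$, so $N^{-1}S_+N=D\,S_+$, where $D$ denotes multiplication by the sequence $n\mapsto g(2^{n-1})/g(2^n)$, which tends to $2$ as $n\to+\infty$ by \eqref{limcond} (and lies in $[1,C]$ by \eqref{g}). Using \cref{lattice_si} and its $\lw(\Z)$-analogue I would reduce the whole statement to \emph{positive} functionals. The easy inclusion is $(N^*)^{-1}(\text{$S_+$-invariant, supported at }+\infty\text{ on }\eli)\subseteq(\tfrac12S_+\text{-invariant, supported at }+\infty\text{ on }\lw(\Z))$: the key is the multiplier principle on $\eli$, namely that a continuous $S_+$-invariant $\gamma$ supported at $+\infty$ kills $c_0(\Z_+)$ (\cref{supp_vanish}), so for any bounded $x$ and any $m_n\to c$ one has $(m-c)x\in c_0(\Z_+)$ and hence $\gamma(\mathrm{mult}_m\,x)=c\,\gamma(x)$. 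Applying this with $c=2$ and $m_n=g(2^{n-1})/g(2^n)$ shows $\theta:=\gamma\circ N^{-1}$ satisfies $\theta(S_+y)=\gamma(D\,S_+\,N^{-1}y)=2\gamma(S_+N^{-1}y)=2\gamma(N^{-1}y)=2\theta(y)$, i.e. $\theta$ is $\tfrac12S_+$-invariant; and since $N^{-1}\chi_{(-\infty,a)}$ is bounded and left-supported, hence in $c_0(\Z_+)$, it is annihilated by $\gamma$, so $\theta$ is supported at $+\infty$.

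The opposite inclusion, $N^*(\tfrac12 S_+\text{-invariant, supported at }+\infty\text{ on }\lw(\Z))\subseteq(\text{$S_+$-invariant, supported at }+\infty\text{ on }\eli)$, is where the real work lies, and I expect it to be the main obstacle. By the same bookkeeping it reduces to showing, for $\theta$ continuous, positive, $\tfrac12S_+$-invariant and supported at $+\infty$ on $\lw(\Z)$, that $\theta$ annihilates the multiplier deviation $\mathrm{mult}_{(m'-\frac12)}S_+y$ (where $m'_n=g(2^n)/g(2^{n-1})\to\tfrac12$) and that $\theta\big(N\chi_{(-\infty,a)}\big)=0$. The difficulty is that, in contrast to $\eli$, elements of $\lw(\Z)$ need not be bounded at $-\infty$ when $g(0^+)=\infty$; in particular $N\chi_{(-\infty,a)}$ is the $N$-image of an annihilated indicator but is itself unbounded at $-\infty$, so its annihilation cannot be obtained from the weak support condition by the $c_0(\Z_+)$-argument used above. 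The resolution must genuinely exploit $\tfrac12S_+$-invariance: I would split an arbitrary $y\in\lw(\Z)$ into a part concentrated near $+\infty$, where $m'\to\tfrac12$ and the tail is controlled in $\lw(\Z)$-norm by \eqref{g} so that the multiplier acts as $\tfrac12$ up to a norm-negligible error, and a part concentrated at $-\infty$; the latter is annihilated by combining positivity (domination by a multiple of $\chi_{(-\infty,a)}$) with the scale-coupling furnished by invariance through the relation $N^{-1}S_+N=D\,S_+$ and the limit \eqref{limcond}, which forces the opposite-end contribution to vanish. Once both inclusions are in hand, $N^*$ restricts to a bijection between the two classes, and it is an isometry there because it is already isometric on the full duals by (i); the $-\infty$/$t\to0$ case is identical with the roles of the two ends exchanged.
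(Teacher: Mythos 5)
Part (i) of your proposal is correct, and your ``easy inclusion'' (transporting a continuous $S_+$-invariant $\gamma$ on $\ell_\infty(\Z)$ supported at $+\infty$ to a $\tfrac12 S_+$-invariant functional on $\ell_g(\Z)$) is exactly the paper's argument: the intertwining $N^{-1}S_+N$ = multiplier times $S_+$, with the multiplier handled by \cref{supp_vanish}. Your preliminary reduction to positive functionals via \cref{lattice_si} is unnecessary for that step (continuity suffices), but harmless.

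The gap is the other inclusion, which you explicitly leave as a plan, and the plan cannot work as described. Under the hypotheses you allow yourself --- $\theta$ positive, continuous, $\tfrac12S_+$-invariant, and ``supported at $+\infty$'' in the weak sense $\theta(\chi_{(-\infty,a)})=0$ --- the desired conclusion is \emph{false}. Take $g(t)=1/t$ (so $g(t)/g(2t)\equiv 2$ and \eqref{limcond} holds at $+\infty$), fix a Banach limit $B$ on $\ell_\infty(\Z_+)$, and set $\theta(z):=B\bigl(\{2^{-k}z_{-k}\}_{k\ge0}\bigr)$ for $z\in\ell_g(\Z)$. Since $|2^nz_n|\le o_n(z)/g(2^n)\le\|z\|_{\ell_g}$, $\theta$ is positive and continuous; since Banach limits are also $S_-$-invariant, $\theta(S_+z)=2\theta(z)$, i.e.\ $\theta$ is $\tfrac12S_+$-invariant; and $\theta(\chi_{(-\infty,a)})=0$ because $2^n\to0$ as $n\to-\infty$. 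Yet $\theta(N\chi_{(-\infty,a)})=B(1,1,\dots)=1$: in fact $N^*\theta$ is a Banach limit at $-\infty$, an $S_+$-invariant functional supported at $-\infty$, not at $+\infty$. So no combination of positivity, continuity and invariance --- your ``scale-coupling'' --- can force $\theta(N\chi_{(-\infty,a)})=0$ from the weak support hypothesis; the direction you call hard is unprovable in the form you set it up.

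What resolves it is definitional, not analytic: ``supported at $+\infty$'' for functionals on $\ell_g(\Z)$ must be read as the sequence analogue of \cref{SF}, namely $\theta(z\chi_{(-\infty,a)})=0$ for \emph{every} $z\in\ell_g(\Z)$ and every $a$; combined with continuity this gives the $\ell_g$-analogue of \cref{supp_vanish}: $\theta$ annihilates every $z$ with $o_n(z)/g(2^n)\to0$ as $n\to+\infty$ (split $z=z\chi_{(-\infty,a)}+z\chi_{[a,\infty)}$, the second summand having small $\ell_g$-norm). This is the class of functionals that actually corresponds to symmetric functionals supported at infinity, and it excludes the $\theta$ above. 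With this reading the ``hard'' direction is as short as the easy one, and it is precisely the paper's computation: from $(NS_+x)_n=g(2^n)x_{n-1}$ one gets
\begin{equation*}
NS_+x=\tfrac12 S_+Nx-\tfrac12 S_+Nx\left(1-2\,\frac{g(2^n)}{g(2^{n-1})}\right),
\end{equation*}
and the second term, divided entrywise by $g(2^n)$, is bounded (by \eqref{g}) and tends to $0$ at $+\infty$ (by \eqref{limcond}), hence is killed by $\theta$; then $\tfrac12S_+$-invariance yields $(N^*\theta)(S_+x)=\theta(Nx)=(N^*\theta)(x)$. Support preservation is likewise immediate, since $N\chi_{(-\infty,a)}=(N\chi_{\Z})\chi_{(-\infty,a)}$ is a truncation of $N\chi_{\Z}\in\ell_g(\Z)$.
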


\begin{proof}
	The fact that $N$ and $N^*$ are isometries is straightforward.
	
	Let $\theta \in \ell_g(\Z)^*$ be a continuous $\frac12 S_+$-invariant functional supported at $+\infty$ (respectively, supported at $-\infty$). Clearly, $N^*\theta$ is a continuous functional supported at $+\infty$ (respectively, supported at $-\infty$). Further,
	\begin{align*}
		(N^*\theta)(S_+x)&=(\theta \circ N)(S_+x)=\theta(\{g(2^n) x_{n-1}\}_{n\in \Z})\\
		&=\theta\left(\left\{\frac12g(2^{n-1})  x_{n-1} \cdot 2\frac{g(2^n)}{g(2^{n-1})}\right\}_{n\in \Z}\right)\\
		&=\theta\left(\frac12 S_+\left\{g(2^{n})  x_{n} \right\}_{n\in \Z}\right)
		-\theta\left(\frac12 S_+\left\{g(2^{n})  x_{n} \right\}_{n\in \Z} \left(1-2\frac{g(2^n)}{g(2^{n-1})} \right)\right).
	\end{align*}
	Using assumptions on $g$ and \cref{supp_vanish}, we deduce that the second summand equal to zero. Thus, $(N^*\theta)(S_+x)=(N^*\theta)(x),$
	that is, $N^*\theta$ is $S_+$-invariant functionals on $\ell_\infty(\Z)$.
	
	Let $\gamma \in \ell_\infty(\Z)^*$ be a continuous $S_+$-invariant functional supported at $+\infty$ (respectively, supported at $-\infty$). Clearly, $(N^*)^{-1}\gamma$ is a continuous functional supported at $+\infty$ (respectively, supported at $-\infty$). Further,
	\begin{align*}
		((N^*)^{-1}\gamma)(\frac12 S_+x)&=(\gamma \circ N^{-1})(\frac12 S_+x)=\gamma(\{\frac1{g(2^n)} \frac12 x_{n-1}\}_{n\in \Z})\\
		&=\gamma\left(\left\{ \frac{x_{n-1}}{g(2^{n-1})} \cdot \frac12\frac{g(2^{n-1})}{g(2^n)}\right\}_{n\in \Z}\right)\\
		&=\gamma\left(S_+\left\{\frac{x_{n}}{g(2^{n})} \right\}_{n\in \Z}\right)=(N^*\gamma)(x),
	\end{align*}
	that is, $(N^*)^{-1}\gamma$ is $\frac12 S_+$-invariant functionals on $\ell_g(\Z)$.
\end{proof}

\begin{remark}
	Condition \eqref{limcond} is essential. Let $x_n = g(2^n)$. The sequence
	$$\frac12 \frac{x_{n-1}}{g(2^n)}  - \frac{x_{n-1}}{g(2^{n-1})}= \frac12 - \frac{g(2^n)}{g(2^{n-1})}$$
	does not vanish, in general. So, it does not belong to
	$(I-S_+)(\ell_\infty(\Z))$.
	
	Continuity is essential as well. Let $g(t)=\frac{\log_2 t}{t}$ and $x_n = g(2^n)$. The sequence
	$$\frac12 \frac{x_{n-1}}{g(2^n)}  - \frac{x_{n-1}}{g(2^{n-1})}=\frac12 (1- \frac{n}{n-1}) = \frac{1/2}{n-1}$$
	does not belong to $(I-S_+)(\ell_\infty(\Z))$.

	 Hence, in both cases there exists an $S_+$-invariant functionals on $\ell_\infty(\Z)$ which does not vanish on this sequence, and, so, $(N^*)^{-1}\gamma$ is not $\frac12 S_+$-invariant.
\end{remark}

Define operators $D_g:S(\Z)\to L_g(0,\infty)$ and $\cD_g:S(\Z)\to \Lw(\cM,\tau)$ by setting
\begin{equation}\label{def_Dg}
	D_gx=\sum_{n\in\Z} x_n g(2^n) \chi_{[2^n,2^{n+1})} \ \text{and} \ \cD_g x=\iota D_gx,\quad x\in S(\Z).
\end{equation}

For a given $X\in \Lw(\cM,\tau)$ we define a sequence $\Phi_g X$ by setting
\begin{equation}\label{def_Phi_g}
\Phi_g(X):=\left\{\frac1{2^n g(2^n)}\int_{2^n}^{2^{n+1}}\mu(s,X)ds\right\}_{n\in\Z}
\end{equation}
By \cref{simple}, the operator $\Phi_g : \Lw(\cM,\tau) \to \ell_\infty(\Z)$ is well-defined and bounded.

\begin{proposition}\label{inv1}
Let $g : (0,\infty) \to (0,\infty)$ be a positive decreasing function.

(i) Equality 
$$\phi(X)=\phi(\cD_g\Phi_g X), \quad X\geq 0$$
holds for any symmetric functional $\phi$ on  $\Lw(\cM,\tau)$;

(ii) Equality 
$$\theta(x) = \theta(\Phi_g \cD_g x), \ x \ge 0$$
holds for every continuous $S_+$-invariant functional $\theta$ on $\ell_\infty(\Z)$ supported at $+\infty$ (respectively, supported at $-\infty$) if
$$ \lim_{t\to +\infty} \frac{g(t)}{g(2t)} =2\ \left( \text{respectively,} \lim_{t\to 0} \frac{g(t)}{g(2t)} =2\right) .$$
\end{proposition}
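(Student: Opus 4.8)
The plan is to prove part (i) directly from the dyadic correspondence of \cref{Bijective correspondence}, and then to deduce part (ii) from part (i) together with \cref{the map of anger}.

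For part (i), fix $0\le X\in\Lw(\cM,\tau)$. Since $\phi$ is symmetric, its value depends only on $\mu(X)$, and likewise $\phi(\cD_g\Phi_g X)$ depends only on $\mu(\cD_g\Phi_g X)$; I may therefore choose the diagonalising isomorphism $\iota$ with $\iota(\mu(X))=X$ and work inside the commutative subalgebra $\iota(S(0,\infty))$. Writing $f=\mu(X)$, a decreasing function, I would first observe that $D_g\Phi_g X$ is constant on each dyadic block $[2^n,2^{n+1})$, where it equals the average $\frac{1}{2^n}\int_{2^n}^{2^{n+1}} f(s)\,ds$ of $f$; as $f$ is decreasing these averages decrease in $n$, so $D_g\Phi_g X$ is itself decreasing and hence $\mu(\cD_g\Phi_g X)=D_g\Phi_g X$. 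Thus both $X$ and $Y:=\cD_g\Phi_g X$ are positive and lie in $\iota(S(0,\infty))$.

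Next I would produce dyadic representations by cutting on dyadic blocks, setting $X_k=\iota(f\chi_{[2^k,2^{k+1})})$ and $Y_k=\iota((D_g\Phi_g X)\chi_{[2^k,2^{k+1})})$. Each summand lies in $\cM$ with $\tau(s(X_k)),\tau(s(Y_k))\le 2^k$, and the tails satisfy $\|X-\sum_{k\le n}X_k\|=f(2^{n+1})=\mu(2^{n+1},X)$, so that the remainder sequence $\{\,\|X-\sum_{k\le n}X_k\|\,\}_n$ lies in $\lw(\Z)$ because $\mu(2^{n+1},X)\le \nw{X}\,g(2^{n+1})\le\nw{X}\,g(2^n)$; the same estimate applies to $Y$, so both are genuine $\lw(\Z)$-dyadic representations. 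A direct computation then gives $2^{-k}\tau(X_k)=2^{-k}\int_{2^k}^{2^{k+1}} f = g(2^k)(\Phi_g X)_k = 2^{-k}\tau(Y_k)$, so the two coefficient sequences coincide and both equal $\{g(2^k)(\Phi_g X)_k\}_{k\in\Z}$. Applying the formula of \cref{Bijective correspondence} to the functional $\vartheta$ on $\lw(\Z)$ associated with $\phi$ yields $\phi(X)=\vartheta(\{2^{-k}\tau(X_k)\})=\vartheta(\{2^{-k}\tau(Y_k)\})=\phi(Y)$, which is part (i).

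For part (ii), let $\theta$ be a continuous $S_+$-invariant functional on $\ell_\infty(\Z)$ supported at $+\infty$ (the case of $-\infty$ being entirely symmetric). Using the limit condition on $g$, \cref{the map of anger}(ii) provides $\vartheta:=(N^*)^{-1}\theta$, a continuous $\tfrac12 S_+$-invariant functional on $\lw(\Z)$ supported at $+\infty$, with $\theta(y)=\vartheta(Ny)$ for all $y$. Let $\phi$ be the symmetric functional on $\Lw(\cM,\tau)$ that \cref{Bijective correspondence} associates with $\vartheta$, so that $\vartheta(z)=\phi(\cD z)$; since $\cD\circ N=\cD_g$, this reads $\theta(y)=\phi(\cD_g y)$ for every $y\in\ell_\infty(\Z)$. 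Now apply part (i) to the positive operator $\cD_g x$: taking $y=\Phi_g\cD_g x$ in the last relation I obtain $\theta(x)=\phi(\cD_g x)=\phi(\cD_g\Phi_g\cD_g x)=\theta(\Phi_g\cD_g x)$, as claimed. The main obstacle is the bookkeeping in part (i): verifying that the two block decompositions really are $\lw(\Z)$-dyadic representations (the support-trace bound and, crucially, membership of the remainder sequences in $\lw(\Z)$) and that the reduction to the commutative model via $\iota$ is legitimate for a general semifinite $\cM$. Once part (i) is established, part (ii) is a formal consequence, with the limit hypothesis on $g$ entering only through \cref{the map of anger}.
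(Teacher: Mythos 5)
Your proof is correct, but it reaches the result by a genuinely different route than the paper, most visibly in part (ii). For part (i) the paper simply cites \cite[Lemma~3.12]{LU}, whereas you reprove it from scratch: the reduction to the commutative model via an $\iota$ adapted to $X$ is legitimate (symmetry of $\phi$ makes the choice of $\iota$ immaterial on positive operators), your dyadic-block cuttings of $X$ and of $\cD_g\Phi_g X$ are genuine $\lw(\Z)$-dyadic representations (the tail norms $\mu(2^{n+1},X)\le \nw{X}g(2^n)$ and the analogous bound for the block-average operator give membership in $\lw(\Z)$), and the two coefficient sequences coincide, so \cref{Bijective correspondence} --- specifically the representation-independence of the formula $\phi(X)=\vartheta(\{2^{-k}\tau(X_k)\}_{k\in\Z})$ --- yields $\phi(X)=\phi(\cD_g\Phi_g X)$. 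For part (ii) the paper argues directly at the sequence level: it writes $(x-\Phi_g\cD_gx)_n$ as a dyadic average of $(D_gx)-\mu(\cdot,D_gx)$, invokes $\mu(\cD_gx)=\mu(D_gx)$ from \cite[Theorem~4.5]{LU} together with the Figiel--Kalton estimate \cite[Lemma~2.2]{FigielKalton} to get $\int_0^{2^{n+1}}\bigl((D_gx)(s)-\mu(s,D_gx)\bigr)\,ds=O(2^n g(2^n))$, and concludes from \cref{aconv1} that $x-\Phi_g\cD_gx\in\Ra+c_0(\Z_+)$ (respectively $+\,c_0(\Z_-)$), a set on which every continuous $S_+$-invariant functional supported at $+\infty$ (respectively $-\infty$) vanishes by \cref{supp_vanish}. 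You instead deduce (ii) formally from (i): transfer $\theta$ to $\vartheta=(N^*)^{-1}\theta$ by \cref{the map of anger}(ii) (the only place the limit condition on $g$ enters), pass to the symmetric functional $\phi$ with $\vartheta(z)=\phi(\cD z)$ via \cref{Bijective correspondence}, observe $\cD\circ N=\cD_g$, and apply (i) to the positive operator $\cD_gx$. Both arguments are sound; the paper's route for (ii) is self-contained (it needs neither part (i) nor the correspondence theorem, and shows exactly where continuity, shift-invariance and the support hypothesis are used), while yours is shorter and avoids the Figiel--Kalton commutator estimate entirely, at the cost of leaning on the full strength of the bijective correspondence and on part (i), which the paper itself imports from \cite{LU} rather than proving.
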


\begin{proof}
	Part (i) is proved in \cite[Lemma 3.12]{LU}. 
	
(ii) For every $0 \le x \in \ell_\infty(\Z)$ and $n\in \Z$ we have
$$(x - \Phi_g \cD_g x)_n = x_n - \frac{1}{2^n g(2^n)} \int_{2^n}^{2^{n+1}}\mu(s,\cD_g x)ds.$$

By definition of $D_gx$ we have that
$\int_{2^n}^{2^{n+1}} (D_gx)(s) ds =2^n g(2^n) x_n, \ n\in \Z$
and therefore
\begin{equation}\label{e0}(x - \Phi_g \cD_g x)_n = \frac{1}{2^n g(2^n)} \int_{2^n}^{2^{n+1}} \left((D_gx)(s) -\mu(s,\cD_gx) \right) ds. 
\end{equation}

By \cite[Theorem 4.5]{LU} $\mu(\cD_gx) = \mu(D_gx)$ for every $x \in S(\Z)$. Further, it follows from \cite[Lemma 2.2]{FigielKalton}
$$\left|\int_{0}^{2^{n+1}} \left((D_gx)(s) -\mu(s, D_gx) \right) ds\right| \le 8 c \cdot  2^{n+1} g(2^{n+1}),$$
for every $n\in \Z$ and some constant $c>0$.
The assertion follows from \cref{aconv1}.
\end{proof}

The following theorem describes the bijective correspondence between symmetric functionals on $\mL_g(\cM,\tau)$ and shift-invariant functionals on $\ell_\infty(\Z)$ in the continuous case.

\begin{theorem}\label{one-to-one1}
Assume that $\cM$ is an atomless  von Neumann algebra with $\tau(\mathbf{1})=1$.
Let $g : (0,\infty) \to (0,\infty)$ be a positive decreasing function satisfying condition \eqref{limcond}. 
Let $\cM$ be a nonatomic (or atomic with atoms of equal trace) von Neumann algebra. The rules 
$$\phi(X)=\theta\left(\{\frac{\tau(X_k)}{2^{k} g(2^k)}\}_{k\in\Z}\right), \quad X\in \mL_g(\cM,\tau),$$
$$\theta(x)=\phi(\cD_g x), \quad x\in \ell_\infty(\Z). $$
where $X=\sum_{k\in\Z} X_k$ is a dyadic $\ell_g(\Z)$-representation of $X\in \mL_g(\cM,\tau)$, give a bijective correspondence between all continuous symmetric functionals on $\mL_g(\cM,\tau)$ and all continuous $S_+$-invariant functionals on $\ell_\infty(\Z)$.

In the case, when $\cM$ is an atomless von Neumann algebra with $\tau(\mathbf{1})=$ (respectively, an atomic von Neumann algebra with atoms of equal trace), the similar assertion holds assuming the asymptotic behaviour in \eqref{limcond} only at $0$ (respectively, at $\infty$). 
\end{theorem}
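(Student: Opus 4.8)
The plan is to read the two maps in the statement as the composition of the Pietsch-type correspondence of \cref{Bijective correspondence} with the ``map of anger'' of \cref{the map of anger}. Writing $(Nx)_n=g(2^n)x_n$ as in \cref{the map of anger}, the definitions \eqref{def_D} and \eqref{def_Dg} give $D_gx=D(Nx)$ and hence $\cD_g=\cD\circ N$. Thus, if $\eta$ denotes the $\tfrac12S_+$-invariant functional on $\ell_g(\Z)$ associated to a symmetric functional $\phi$ on $\Lw(\cM,\tau)$ by \cref{Bijective correspondence} (so that $\eta(x)=\phi(\cD x)$ and $\phi(X)=\eta(\{2^{-k}\tau(X_k)\})$), and if I set $\theta:=N^*\eta=\eta\circ N$, then $\theta(x)=\eta(Nx)=\phi(\cD(Nx))=\phi(\cD_gx)$, which is the backward rule; dually, since $(N^{-1}y)_n=y_n/g(2^n)$, substituting $\eta=(N^*)^{-1}\theta$ into the forward rule of \cref{Bijective correspondence} yields $\phi(X)=\theta(\{\tau(X_k)/(2^kg(2^k))\})$, the forward rule of the present theorem. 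So the whole content is to show that, upon restricting to continuous functionals, this composition is a bijection onto all continuous $S_+$-invariant functionals on $\eli$.

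First I would record that continuity is preserved in both directions: the map $\cD_g:\eli\to\Lw(\cM,\tau)$ is bounded, since by \eqref{def_Dg} and \eqref{g} the generalised singular value function of $\cD_gx$ is dominated by a constant multiple of $\|x\|_\infty\,g$, and $\Phi_g:\Lw(\cM,\tau)\to\eli$ is bounded by \cref{simple}; hence $\phi\mapsto\phi\circ\cD_g$ and $\theta\mapsto\theta\circ\Phi_g$ preserve continuity. Using the lattice structure from \cref{lattice_sf} and \cref{lattice_si}, I would then reduce the statement to positive functionals, since every continuous functional is a combination of positive ones through its Hermitian parts. The enabling observation is that a continuous $S_+$-invariant functional $\gamma$ on $\eli$ is automatically singular: shift-invariance gives $\gamma(\chi_{[0,N]})=(N+1)\gamma(e_0)$, and boundedness forces $\gamma(e_0)=0$, whence $\gamma$ vanishes on all finitely supported sequences. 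The counterpart on the operator side, that a continuous symmetric functional on $\Lw(\cM,\tau)$ vanishes on $\mathcal L_1(\cM,\tau)\cap\cM$, I would verify by the analogous test applied to a sum of orthogonal copies of a fixed atom weighted by $g$, using \eqref{g} to bound the quasi-norm.

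With singularity available, I would apply the decomposition lemmas: \cref{decomposition_sf} splits a positive continuous symmetric functional on $\Lw(\cM,\tau)$ uniquely into parts supported at zero and at infinity, while \cref{decomposition_si} splits a positive continuous $S_+$-invariant functional on $\eli$ uniquely into parts supported at $-\infty$ and at $+\infty$. Under $\cD_g$ the index $n\to+\infty$ matches $t=2^n\to+\infty$ and $n\to-\infty$ matches $t\to0$, so the parts pair up as (supported at infinity)$\leftrightarrow$(supported at $+\infty$) and (supported at zero)$\leftrightarrow$(supported at $-\infty$); on each such part \cref{the map of anger}(ii) applies verbatim and supplies the bijection, the limit in \eqref{limcond} at $+\infty$ being consumed by the infinity part and the limit at $0$ by the zero part, which is precisely why both one-sided limits are required in the $\Z$ case. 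That the two composed maps are mutually inverse then follows from \cref{inv1}: part (i) gives $\phi=\phi\circ\cD_g\Phi_g$ on positive operators, and part (ii), applied separately to the $+\infty$- and $-\infty$-supported summands, gives $\theta=\theta\circ\Phi_g\cD_g$ on positive sequences; by the lattice reduction these identities extend to all continuous functionals.

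I expect the main obstacle to be exactly this reduction, because \cref{the map of anger}(ii) is available only for functionals supported at a single infinity: the crux is to establish that continuity forces singularity and then to organise the decomposition so that each summand falls under the map of anger while correctly tracking which instance of \eqref{limcond} each summand consumes. The remaining, one-sided cases then follow by repeating the argument with $\Z$ replaced by $\Z_-$ (for $\tau(\mathbf1)=1$) or by $\Z_+$ (in the atomic case), where only one endpoint occurs, every continuous shift-invariant functional is automatically supported at that endpoint, and accordingly only the corresponding one-sided limit in \eqref{limcond} is needed.
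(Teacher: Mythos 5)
Your proposal follows essentially the same route as the paper's own proof: the paper combines \cref{Bijective correspondence} (together with~\cite[Remark 3.16]{LU}) with \cref{the map of anger} to get a bijection on the parts supported at $+\infty$/at zero, then glues these via \cref{decomposition_sf} and \cref{decomposition_si}, and finally passes from positive to Hermitian to general continuous functionals via \cref{lattice_sf}, \cref{lattice_si} and the Jordan decomposition. Your preliminary identification $\cD_g=\cD\circ N$ and your verification that the two rules are mutually inverse via \cref{inv1} are not spelled out in the paper's proof of this theorem, but they are exactly how the paper itself uses these ingredients (the inverse property via \cref{inv1} is invoked in the proof of \cref{bij}), so this is elaboration rather than a different method; likewise your matching of the two one-sided limits in \eqref{limcond} with the two supports, and your treatment of the $\Z_-$/$\Z_+$ cases, agree with the paper.

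The one point that needs scrutiny is the singularity step, and here you are simultaneously more careful than the paper and not fully correct. You rightly observe that \cref{decomposition_sf} and \cref{decomposition_si} are stated only for \emph{singular} positive functionals, so singularity must be established before decomposing; the paper's proof silently skips this. Your sequence-side argument ($\gamma(\chi_{[0,N]})=(N+1)\gamma(e_0)$ plus boundedness) is correct. But your operator-side claim --- that every continuous symmetric functional on $\Lw(\cM,\tau)$ vanishes on $\mL_1(\cM,\tau)\cap\cM$ --- is false under the stated hypotheses. Your orthogonal-copies test gives $|\phi(q)|\sum_{n\le N}g(n)\le C\|\phi\|$, which forces $\phi(q)=0$ only when $\sum_n g(n)=\infty$, i.e.\ (by \cref{lem_l1}) when $g\notin L_1(0,\infty)$. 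Condition \eqref{limcond} does not exclude integrable $g$: for $g(t)=\tfrac1{(t+2)\log^2(t+2)}$ (the paper's own example in Section~\ref{sec:Dix}) one has $\Lw\subset\mL_1$, and $\tau$ itself is a continuous, positive, non-singular symmetric functional on $\Lw$, so no such argument can exist, and the intended decomposition fails for it. Thus your proof, as written, establishes the theorem only when $g\notin L_1(0,\infty)$ (or for the singular functionals). To be fair, the paper's proof has the identical gap --- it applies \cref{decomposition_sf} to arbitrary positive symmetric functionals without any singularity check, while elsewhere asserting the theorem holds regardless of integrability of $g$ --- so you have reproduced the paper's argument and, in making the hidden hypothesis explicit, located exactly where it is incomplete.
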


\begin{proof}
	From \cref{Bijective correspondence} and \cite[Remark 3.16]{LU} it follows (in particular) that there is a bijection between all positive symmetric functionals on $\mL_g(\cM,\tau)$ supported at $+\infty$ (respectively, supported at zero) and all positive $\frac12 S_+$-invariant functionals on $\lw(\Z)$ supported at $+\infty$ (supported at $-\infty$). Combining this with \cref{the map of anger} we obtain a bijection between all positive symmetric functionals on $\mL_g(\cM,\tau)$ supported at $+\infty$ (respectively, supported at zero) and all positive $S_+$-invariant functionals on $\ell_\infty(\Z)$ supported at $+\infty$ (supported at $-\infty$). 
	
	It follows from \cref{decomposition_sf} that every positive symmetric functional on $\mL_g(\cM,\tau)$ admits a unique decomposition into a sum of positive symmetric functionals on $\mL_g(\cM,\tau)$ supported at $+\infty$ and that supported at zero. By \cref{decomposition_si} similar result holds for positive $S_+$-invariant functionals on $\ell_\infty(\Z)$. 
	Therefore, there is a bijection between all positive symmetric functionals on $\mL_g(\cM,\tau)$ and all positive $S_+$-invariant functionals on $\ell_\infty(\Z)$.
	
	By \cref{lattice_si} the set of all continuous Hermitian $S_+$-invariant functionals on $\ell_\infty(\Z)$ form a sublattice in the lattice $\ell_\infty^*$ and by \cref{lattice_sf} the set of all continuous Hermitian symmetric functionals on $\mL_g(\cM,\tau)$ is a lattice too.
	Hence, there is a bijection between all continuous Hermitian symmetric functionals on $\mL_g(\cM,\tau)$ and all continuous Hermitian $S_+$-invariant functionals on $\ell_\infty(\Z)$.
	
	By Jordan decomposition every $S_+$-invariant functionals on $\ell_\infty(\Z)$ and every symmetric functionals on $\mL_g(\cM,\tau)$ are uniquely written as a linear combination of Hermitian functionals. If the original functional is continuous, then the corresponding Hermitian functionals continuous too. This proves the assertion.	
\end{proof}


\begin{corollary}\label{bij}
Let $g, f : (0,\infty) \to (0,\infty)$ be positive decreasing functions satisfying condition \eqref{limcond}. 
There is a bijection $i_{g,f}$ between the set of all continuous symmetric functionals on $\mL_g(\cM,\tau)$ and $\mathcal L_{f}(\cM,\tau),$
given by 
\begin{equation}\label{i}
	i_{g,f}(\phi) = \phi \circ \cD_g \circ \Phi_f.
\end{equation}
\end{corollary}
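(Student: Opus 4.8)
The plan is to realise $i_{g,f}$ as a composition of the two bijections furnished by \cref{one-to-one1}, exploiting the fact that both of them have the \emph{same} target, namely the continuous $S_+$-invariant functionals on $\eli$. Concretely, \cref{one-to-one1} provides a bijection $\Theta_g$ (respectively $\Theta_f$) from the continuous symmetric functionals on $\mL_g(\cM,\tau)$ (respectively on $\mathcal L_f(\cM,\tau)$) onto the continuous $S_+$-invariant functionals on $\eli$, given on $\phi$ by $\Theta_g(\phi)=\phi\circ\cD_g$, with inverse characterised by $\Theta_g^{-1}(\theta)\circ\cD_g=\theta$. Since the codomains of $\Theta_g$ and $\Theta_f$ are one and the same space of functionals on $\eli$, I would simply set $i_{g,f}:=\Theta_f^{-1}\circ\Theta_g$; being a composition of two bijections, it is automatically a bijection between the continuous symmetric functionals on $\mL_g(\cM,\tau)$ and those on $\mathcal L_f(\cM,\tau)$. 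It then remains only to check that this composition agrees with the explicit formula \eqref{i}.

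For the verification I would fix a continuous symmetric functional $\phi$ on $\mL_g(\cM,\tau)$, put $\theta:=\Theta_g(\phi)=\phi\circ\cD_g$ and $\psi:=\Theta_f^{-1}(\theta)$, so that $\psi$ is the continuous symmetric functional on $\mathcal L_f(\cM,\tau)$ determined by $\psi\circ\cD_f=\theta$. Applying \cref{inv1}(i) to $\psi$ on $\mathcal L_f(\cM,\tau)$ gives $\psi(Y)=\psi(\cD_f\Phi_f Y)$ for every $Y\ge 0$, whence
\begin{equation*}
	\psi(Y)=\psi(\cD_f\Phi_f Y)=\theta(\Phi_f Y)=\phi(\cD_g\Phi_f Y)=(\phi\circ\cD_g\circ\Phi_f)(Y),\qquad Y\ge 0.
\end{equation*}
Thus $\psi$ and $\phi\circ\cD_g\circ\Phi_f$ coincide on the positive cone. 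Since a symmetric functional is linear and every element of $\mathcal L_f(\cM,\tau)$ is a linear combination of positive ones (cf.\ \cref{lem_op_decomp}), $\psi$ is the unique linear extension of $\phi\circ\cD_g\circ\Phi_f$ from the positive cone, i.e.\ $i_{g,f}(\phi)=\Theta_f^{-1}(\Theta_g(\phi))$, matching \eqref{i}.

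The only genuinely delicate point is that the right-hand side of \eqref{i} is \emph{a priori} just a map on operators and is not obviously linear, because $\Phi_f$ is nonlinear (it is assembled from the singular value function). This is exactly what the factorisation through $\eli$ resolves: identifying $\phi\circ\cD_g\circ\Phi_f$ on the positive cone with the honestly linear functional $\Theta_f^{-1}(\Theta_g(\phi))$ supplies linearity, continuity and symmetry for free, all inherited through $\Theta_f^{-1}$. I would also confirm at the outset that the hypothesis \eqref{limcond} holds for both $g$ and $f$ (at the endpoint dictated by $\cM$ and $\tau$), so that \cref{one-to-one1} is applicable to each of them separately; this is exactly what the assumptions of the corollary guarantee.
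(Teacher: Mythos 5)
Your proof is correct, and it rests on the same two pillars as the paper's own argument: \cref{one-to-one1} (passage to continuous $S_+$-invariant functionals on $\ell_\infty(\Z)$) and \cref{inv1}; but the logical organisation is genuinely different. The paper takes \eqref{i} as the \emph{definition} of $i_{g,f}$, uses \cref{one-to-one1} to see that it produces continuous symmetric functionals on $\mathcal L_f(\cM,\tau)$, and then proves bijectivity by the round-trip computation $\phi_1\circ\cD_g\circ\Phi_f\circ\cD_f\circ\Phi_g=\phi_1\circ\cD_g\circ\Phi_g=\phi_1$, which consumes \emph{both} parts of \cref{inv1} --- in particular part (ii), where continuity and \eqref{limcond} do real work. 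You instead define $i_{g,f}:=\Theta_f^{-1}\circ\Theta_g$, so bijectivity is automatic, and you spend \cref{inv1}(i) exactly once (applied to $\psi$ on $\mathcal L_f(\cM,\tau)$) to identify this composition with \eqref{i} on the positive cone. Your route is more economical, since it never invokes \cref{inv1}(ii), and it makes explicit a point the paper glosses over: because $\Phi_f$ is not additive, \eqref{i} defines a linear functional only through its values on positive operators together with linear extension. What the paper's longer computation buys in exchange is the explicit inverse: it shows $i_{g,f}^{-1}=i_{f,g}$, i.e.\ the inverse bijection is given by the same formula with $f$ and $g$ interchanged, a fact which in your setup is only implicit by symmetry of the construction. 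One small correction: \cref{lem_op_decomp} is not the right citation for the fact that every element of $\mathcal L_f(\cM,\tau)$ is a linear combination of positive operators --- that lemma gives the $\cL_1(\cM,\tau)+\cM$ splitting; the fact you actually need is the standard decomposition into real and imaginary parts and then positive and negative parts, which is valid in any symmetric quasi-normed space.
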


\begin{proof}
It follows from \cref{one-to-one1} that every continuous symmetric functional $\phi_1$ on $\Lw(\cM,\tau)$ corresponds to a continuous $S_+$-invariant linear functional $\theta = \phi_1 \circ \cD_g$ on $\ell_\infty(\Z)$. Further, \cref{one-to-one1} yields that the functional $\phi_2 = \theta \circ \Phi_f = \phi_1 \circ \cD_g \circ \Phi_f$ is a continuous symmetric functional on $\mathcal L_{f}(\cM,\tau).$

Applying \cref{one-to-one1} to the continuous symmetric functional $\phi_2$ on $\mathcal L_{f}(\cM,\tau)$ in the similar way, we obtain that it corresponds to a symmetric functional $\phi = \phi_2 \circ \cD_f \circ \Phi_g$ on $\Lw(\cM,\tau)$. Hence,
$$\phi = \phi_1 \circ \cD_g \circ \Phi_f \circ \cD_f \circ \Phi_g.$$

Since $\phi_1 \circ \cD_g$ is a continuous $S_+$-invariant linear functional on $\ell_\infty(\Z)$, it follows from  \cref{inv1}(ii) that 
$$\phi = \phi_1 \circ \cD_g \circ \Phi_g.$$

Since $\phi_1$ is a symmetric functional on $\Lw(\cM,\tau)$, it follows from \cref{inv1}(i) that 
$$\phi = \phi_1.$$
\end{proof}

\begin{corollary}\label{bijpos}
Let $g, f$ be as in \cref{bij}. Since the operators $\cD_g$ and $\Phi_f$ are positive, it follows that the mapping $i_{g,f}$ given by~\eqref{i} is a bijection between the set of all \textbf{positive} symmetric functionals on $\Lw(\cM,\tau)$ and $\mathcal L_{f}(\cM,\tau).$
\end{corollary}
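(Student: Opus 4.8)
The plan is to leverage \cref{bij} directly. That corollary already shows $i_{g,f}$ is a bijection on the full space of continuous symmetric functionals, and its proof (in which $g$ and $f$ play symmetric roles) shows that the two-sided inverse is $i_{f,g}$. Hence it suffices to verify that both $i_{g,f}$ and $i_{f,g}$ carry positive functionals to positive functionals; a bijection that maps a distinguished cone into the corresponding cone in both directions restricts to a bijection between those cones.

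First I would record the positivity of the two building-block operators $\Phi_f$ and $\cD_g$. For $0 \le X \in \mathcal L_f(\cM,\tau)$ each entry of $\Phi_f(X)$ equals $\frac{1}{2^n f(2^n)}\int_{2^n}^{2^{n+1}}\mu(s,X)\,ds$, which is nonnegative since $\mu(s,X)\ge 0$ and $f>0$; thus $\Phi_f X \ge 0$ in $\ell_\infty(\Z)$. Similarly, for $0 \le x \in \ell_\infty(\Z)$ the function $D_g x = \sum_{n\in\Z} x_n g(2^n)\chi_{[2^n,2^{n+1})}$ is pointwise nonnegative, and as $\iota$ is a positivity-preserving $*$-isomorphism, $\cD_g x = \iota D_g x \ge 0$. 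Consequently $\cD_g \circ \Phi_f$ sends the positive cone of $\mathcal L_f(\cM,\tau)$ into the positive cone of $\Lw(\cM,\tau)$.

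The main claim is then immediate. If $\phi$ is a positive symmetric functional on $\Lw(\cM,\tau)$, then for every $0 \le Y \in \mathcal L_f(\cM,\tau)$ we have $i_{g,f}(\phi)(Y) = \phi(\cD_g \Phi_f Y) \ge 0$, because $\cD_g \Phi_f Y \ge 0$ by the previous step; hence $i_{g,f}(\phi)$ is positive. Interchanging $g$ and $f$ in the same argument shows that $i_{f,g} = (i_{g,f})^{-1}$ also preserves positivity.

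I do not expect a genuine obstacle, since the substance is already contained in \cref{bij}; the only point needing care is the logical step that positivity-preservation of \emph{both} $i_{g,f}$ and its inverse yields a bijection of the positive cones. Concretely, surjectivity onto the positive functionals on $\mathcal L_f(\cM,\tau)$ uses that any positive $\psi$ there equals $i_{g,f}(i_{f,g}(\psi))$ with $i_{f,g}(\psi)$ positive — which is exactly why one needs positivity-preservation of the inverse $i_{f,g}$, not merely of $i_{g,f}$.
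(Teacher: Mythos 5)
Your proposal is correct and is essentially the paper's own argument: the paper justifies the corollary precisely by the positivity of $\cD_g$ and $\Phi_f$, which is exactly what you verify, together with the observation that $i_{f,g}$ is the inverse of $i_{g,f}$ from the proof of \cref{bij}. The only point you leave implicit (as does the paper, which invokes it later in the proof of \cref{NCT}) is that positive symmetric functionals are automatically continuous, so that the positive cones really sit inside the classes between which \cref{bij} establishes the bijection.
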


We conclude this section with the discussion of  measurability with respect to all normalised continuous symmetric functionals. We call a symmetric functional $\phi$ on $\Lw(\cM,\tau)$ normalised if $\phi(\cD_g\chi_{\Z})=1$.

%

\begin{proposition}\label{NCT} Let $g : (0,\infty) \to (0,\infty)$ be a positive decreasing function satisfying condition \eqref{limcond}.
All normalised continuous symmetric functionals on $\Lw(\cM,\tau)$ take the same value on an operator $X\in \Lw(\cM,\tau)$ if and only if all normalised positive symmetric functionals on $\Lw(\cM,\tau)$ take the same value on $X$.
\end{proposition}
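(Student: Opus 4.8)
The plan is to exploit two features of the simply generated space $\Lw(\cM,\tau)$: its generator $\cD_g\chi_{\Z}$ dominates every element in singular value, and the continuous Hermitian symmetric functionals form a sublattice of $\Lw(\cM,\tau)^*$ by \cref{lattice_sf}. Together these show that the class of normalised positive symmetric functionals sits inside the class of normalised continuous ones and, at the same time, spans it; this yields the two implications.

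First I would record the domination inequality. For $0\le X\in\Lw(\cM,\tau)$ and $t\in[2^n,2^{n+1})$ one has $\mu(t,X)\le\nw{X}\,g(t)\le\nw{X}\,g(2^n)=\nw{X}\,\mu(t,\cD_g\chi_{\Z})$, so $\mu(X)\le\nw{X}\,\mu(\cD_g\chi_{\Z})$. Passing to the diagonal models $\iota(\mu(X))$ and $\iota(\mu(\cD_g\chi_{\Z}))$, where the pointwise inequality of decreasing functions becomes an operator inequality preserved by the $*$-isomorphism $\iota$, positivity of a positive symmetric functional $\psi$ gives the monotonicity $0\le\psi(X)\le\nw{X}\,\psi(\cD_g\chi_{\Z})$. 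Two consequences follow: every positive symmetric functional is continuous, so normalised positive symmetric functionals are in particular normalised continuous ones; and $\cD_g\chi_{\Z}$ is faithful, that is, $\psi(\cD_g\chi_{\Z})=0$ forces $\psi=0$. The forward implication is then immediate: if all normalised continuous symmetric functionals take a common value $c$ on $X$, then so do all normalised positive ones.

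For the converse, assume every normalised positive symmetric functional takes the value $c$ on $X$. Using faithfulness to normalise, this is equivalent to the identity $\psi(X)=c\,\psi(\cD_g\chi_{\Z})$ for every positive symmetric functional $\psi$ (both sides vanishing when $\psi=0$). By \cref{lattice_sf} together with the Jordan decomposition used in the proof of \cref{one-to-one1}, every continuous symmetric functional $\phi$ is a complex-linear combination of positive continuous symmetric functionals; since $\phi\mapsto\phi(X)$ and $\phi\mapsto\phi(\cD_g\chi_{\Z})$ are linear in $\phi$, the identity $\phi(X)=c\,\phi(\cD_g\chi_{\Z})$ extends from the positive cone to all continuous symmetric functionals. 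In particular, if $\phi(\cD_g\chi_{\Z})=1$ then $\phi(X)=c$, which is the claim.

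I expect the main obstacle to be the first step: establishing that positive symmetric functionals are continuous, together with the faithfulness of $\cD_g\chi_{\Z}$. Both hinge on the monotonicity of positive symmetric functionals with respect to singular value domination, and this is exactly where the simply generated hypothesis, through the single dominating profile $g$ and the diagonal isomorphism $\iota$, is genuinely used; the lattice decomposition and the normalisation bookkeeping in the converse are then routine.
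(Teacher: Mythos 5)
Your proof is correct and follows essentially the same route as the paper: the forward direction rests on positive symmetric functionals being continuous, and the converse uses \cref{lattice_sf} together with the Meyer--Nieberg/Jordan decomposition of a continuous symmetric functional into positive symmetric ones, followed by the same normalisation bookkeeping. Your write-up is in fact slightly more careful than the paper's: the domination $\mu(X)\le\nw{X}\,\mu(\cD_g\chi_{\Z})$ and the diagonal model give both the continuity of positive symmetric functionals (which the paper merely asserts) and the faithfulness $\psi(\cD_g\chi_{\Z})=0\Rightarrow\psi=0$, which justifies dividing by $\phi_\pm(\cD_g\chi_{\Z})$ --- a nonvanishing the paper's normalisation step takes for granted.
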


\begin{proof} Since every positive functional is continuous, the ``only if'' part is straightforward.

Conversely, let $X\in \Lw(\cM,\tau)$ and all normalised positive symmetric functionals on $\Lw$ equal to $a\in \C$ on $X$. 

By \cref{lattice_sf} the set of all continuous Hermitian symmetric functionals on $\Lw(\cM,\tau)$ is a lattice, and therefore it follows from~\cite[Theorem 1.1.1 (ii)]{MN} that every continuous symmetric functional $\phi$ on $\Lw(\cM,\tau)$ can be written in the form $\phi = \phi_+ - \phi_-$, where $\phi_+$, $\phi_-$ are positive symmetric functionals on $\Lw(\cM,\tau)$.

If we further assume that $\phi$ is normalised, then $$\phi_+(\cD_g\chi_{\Z}) - \phi_-(\cD_g\chi_{\Z})=1.$$ Note that the functionals $\phi_{\pm}:= \frac{\phi_\pm}{\phi_\pm(\cD_g\chi_{\Z})}$ are normalised positive symmetric functionals on $\Lw(\cM,\tau)$ and
$$\phi = \alpha_1 \phi_+ + (1-\alpha_1) \phi_-,$$
where $\alpha_1=\phi_+(\cD_g\chi_{\Z}).$
Hence,
\begin{align*}
	\phi(X) &= \alpha_1 \phi_+(X) + (1-\alpha_1) \phi_-(X)=a,
\end{align*}
that is all normalised Hermitian continuous symmetric functionals on $\Lw(\cM,\tau)$ take the same value on an operator $X$. The assertion follows from the Jordan decomposition.
\end{proof}

We finish this section with an example of a magnetic Schr\"odinger operator which falls into one of simply generated spaces considered in this paper.

\begin{example}
	Let $\mathbb R^d = \mathbb R^n \times \mathbb R^m$ and we write a variable $z \in \mathbb R^d$ by $z = (x, y) \in \mathbb R^n_x \times \mathbb R^m_y$.
	We consider the operator:
	$$H(A, V ) =\frac12 \left((i\nabla_{(x,y)} + A(x, y))^2 + V (x, y) \right),$$
	where $i =\sqrt{-1}$ and $\nabla_{(x,y)}$ denotes the gradient operator. Assume that $A$ and $V$ are such that
	\begin{enumerate}
		\item[(i)] $V (x, y) \in C^1(\mathbb R^n \times \mathbb R^m)$ is a real valued function;
		
		\item[(ii)] There exist positive constants $p, q$ and $C \ge c > 0$ such that
		$$c (1 + |x|^2)^p |y|^{2q} \le V (x, y) \le C (1 + |x|^2)^p |y|^{2q}$$ for all $(x, y) \in \mathbb R^n \times \mathbb R^m$;
		
		\item[(iii)] $A(x, y) = (a_1(x, y), \dots , a_d(x, y)) \in C^2(\mathbb R^d, \mathbb R^d)$;
		
		\item[(iv)] There exist constants $a, b$ satisfying $0 \le a < p, 0 \le b < q, (q + 1)a <p(b + 1)$ and $C_1 > 0$ such that for every $j = 1, 2, \dots, d$ and $|\alpha| \le 2$,
		$$\left|\partial^\alpha_{x,y} a_j(x, y) \right| \le C_1 (1 + |x|^2)^a |y|^{2b}.$$
	\end{enumerate}
	
	For $pm=qn$ and $m(1+q+p) = 2q$ it follows from \cite[Corollary 2.3]{Aramaki2001} that there exist $c_3>0$ such that
	$$\lim_{t\to\infty} \frac{N_{H(A,V)}(t)}{t\log t} = c_3,$$
	where $N_{H(A,V)}(t) = \left|\left\{ n\ge0 : \lambda(n, H(A,V)) \le t\right\}\right|$ is the eigenvalue counting function. Further, for the distribution function we have 
	$$1 = \lim_{s\to0} \frac{d_{(1+H(A,V))^{-1}}(s)}{N_{H(A,V)}(1/s)} = \lim_{s\to0} \frac{d_{(1+H(A,V))^{-1}}(s)}{c_3 1/s \log(1/s)}.$$
	Taking the asymptotic inverses of both the numerator and the denominator, we obtain
	$$\lim_{t\to\infty} \frac{\mu(t, (1+H(A,V))^{-1})}{\log(t+2)/(t+2)} = c_3.$$
	
	This implies that the operator $(1+H(A,V))^{-1}$ belongs to the space $\mathcal L_g$ with $g(t)= \log(t+2)/(t+2).$ 
	Moreover, it follows from \cite[Proposition 5.12]{Ponge} that all positive normalised traces on $\mathcal L_g$ equal $c_3$ on this operator. 
	Hence, by \cref{NCT} all continuous normalised traces on $\mathcal L_g$ equal $c_3$ on this operator.
\end{example}

\section{DIXMIER TRACES}\label{sec:Dix}

In this section we characterise $S_+$-invariant functionals corresponding to Dixmier traces in \cref{one-to-one1}.
We shall also show that the mapping~\eqref{i} is not a bijection between the set of all Dixmier traces on distinct simply generated symmetric spaces.
We restrict our attention to the case when $(\mM, \tau)$ is the algebra $B(H)$ with the standard trace. In this case, symmetric functionals on $\Lw(H) := \Lw(B(H), \rm Tr)$ correspond to $S_+$-invariant functionals on the space $\ell_\infty :=\ell_\infty(\Z_+)$ of one-sided sequences and the classes of traces and that of symmetric functionals coincide. Furthermore,  the definition of $\Lw(H)$ is independent of the behaviour of the function $g$ near $0$. In particular, without loss of generality, we can assume, in addition, that $g$ is a bounded function on $(0,\infty)$.

Here we define Dixmier traces in a form which is different from the original Dixmier definition in~\cite{D} (see e.g. \eqref{Dix_original_construction}). It is equivalent to the original one by~\cite[Theorem 17]{Sed_Suk}.

\begin{definition}\label{Dix_L} Let $g$ satisfy~\eqref{limcond} at $\infty$ and $g\notin L_1(0,\infty)$.
A symmetric functional $\phi$ on $\Lw(H)$ is called a Dixmier trace if 
$$\phi (X)= \frac1{\log 2}\omega \left( n \mapsto \frac{\sum_{k=0}^n \mu(k,X)}{G(n)} \right), \quad 0\le X\in \Lw(H), $$
for some extended limit $\omega$ on $\ell_\infty$. Here, $G$ is the primitive of $g$.
\end{definition}

\begin{remark}
In this paper we have chosen to normalise traces on $\Lw(H)$ by $\phi(\cD_g\chi_\Z)=1$. This allows to state \cref{one-to-one1} without a constant (compare to~\cite[Theorem 4.1]{SSUZ}). Usually Dixmier traces are normalised by $\phi({\rm diag}\{ g(n)\})=1$. To make Dixmier traces normalised in the same way as all other traces in this paper we must have a constant in the definition.
\end{remark}

%
%
%
%
%

\begin{remark}
The assumption $g\notin L_1(0,\infty)$ is crucial for the existence of Dixmier traces on $\mL_g(H)$. Indeed, if $g\in L_1(0,\infty)$, then the space $\mL_g(H)$ is a subspace of the trace class $\mL_1(H)$.
Moreover, the function $G(t)=\int_0^t g(s) ds$ is uniformly bounded on $[0,\infty)$ and $\lim_{t\to+\infty} G(t)$ exists. Using the properties of extended limits (see Proposition \ref{EL}), we obtain
\begin{align*}
	{\rm Tr}_\omega(X)&=\frac1{\log 2}\frac1{\lim_{t\to\infty}G(t)}\omega\left(n \mapsto \sum_{k=0}^n\mu(k,X)\right)\\
	&=\frac1{\log 2}\frac1{\lim_{t\to\infty} G(t)}\cdot \sum_{k=0}^\infty \mu(k,X),
\end{align*}
that is, in this case all Dixmier traces are scalar multiple of the standard trace.
Note, that Theorem \ref{one-to-one1} constructs all continuous symmetric functionals on $\mL_g(H)$ regardless of the integrability of $g$. 
\end{remark}

We now collect some of the properties of the function $g$.

\begin{lemma}\label{lem_g_prop}
Suppose that $g:(0,\infty)\to (0,\infty)$ is a bounded decreasing function, such that $g\notin L_1(0,\infty)$ and $\lim_{t\to\infty} \frac{g(t)}{g(2t)}=2$. Then, denoting by $G$ a primitive of $g$ we have that 
 \begin{equation}\label{RVcor}
			\frac{t g(t)}{G(t)}\to 0, \ t\to\infty,
			\end{equation}
and \begin{equation}\label{e0001}
				\frac{\sum_{m=0}^n 2^m g(2^m)}{G(2^n)} \to \log 2,
			\end{equation} as $n\to\infty$.

\end{lemma}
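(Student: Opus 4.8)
The plan is to push everything down to the dyadic scale and exploit that \eqref{limcond} forces the sequence $a_m := 2^m g(2^m)$ to vary slowly: indeed $a_{m+1}/a_m = 2\,g(2^{m+1})/g(2^m) \to 1$. For \eqref{RVcor} I would first prove the dyadic version $a_n/G(2^n)\to 0$ and then transfer it to all $t$ by monotonicity. Fix $\varepsilon\in(0,1)$ and choose $N$ so that $1-\varepsilon \le a_{m+1}/a_m \le 1+\varepsilon$ and $g(2^{m+1})/g(2^m)\ge (1-\varepsilon)/2$ for $m\ge N$. Since $g$ is decreasing, $\int_{2^m}^{2^{m+1}} g \ge 2^m g(2^{m+1}) = a_m\, g(2^{m+1})/g(2^m)$, so for $n>N$
\begin{equation*}
G(2^n) \ge \sum_{m=N}^{n-1}\int_{2^m}^{2^{m+1}} g \ge \frac{1-\varepsilon}{2}\sum_{m=N}^{n-1} a_m \ge \frac{1-\varepsilon}{2}\,a_n\sum_{k=1}^{n-N}(1+\varepsilon)^{-k},
\end{equation*}
the last step using $a_m \ge a_n(1+\varepsilon)^{-(n-m)}$. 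Letting $n\to\infty$ gives $\liminf_n G(2^n)/a_n \ge (1-\varepsilon)/(2\varepsilon)$, and then $\varepsilon\to 0$ gives $G(2^n)/a_n\to\infty$, i.e. $a_n/G(2^n)\to 0$. For general $t\in[2^n,2^{n+1})$ monotonicity yields $t g(t)/G(t)\le 2\,a_n/G(2^n)\to 0$, which is \eqref{RVcor}.

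For \eqref{e0001} I would invoke the Stolz--Cesàro theorem. By \cref{lem_l1} the hypothesis $g\notin L_1$ gives $\sum_m a_m=\infty$, so both $S_n:=\sum_{m=0}^n a_m$ and $G(2^n)$ increase to $+\infty$; hence it suffices to analyse the ratio of increments
\begin{equation*}
\frac{S_n-S_{n-1}}{G(2^n)-G(2^{n-1})} = \frac{2^n g(2^n)}{\int_{2^{n-1}}^{2^n} g(s)\,ds} = \left(\int_{1/2}^{1}\frac{g(2^n u)}{g(2^n)}\,du\right)^{-1},
\end{equation*}
the final equality coming from the substitution $s=2^n u$. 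Everything thus reduces to the single per-block asymptotic $\int_{2^{n-1}}^{2^n} g \sim (\log 2)\,2^n g(2^n)$, equivalently $\int_{1/2}^1 g(2^n u)/g(2^n)\,du \to \int_{1/2}^1 u^{-1}\,du=\log 2$. Here monotonicity of $g$ supplies the domination $g(2^n u)/g(2^n)\le g(2^{n-1})/g(2^n)$, which stays bounded by \eqref{limcond}, so the convergence of the integrals follows from the pointwise convergence $g(2^n u)/g(2^n)\to u^{-1}$ via dominated convergence. Feeding this per-block asymptotic into Stolz--Cesàro then identifies the limit of $\sum_{m=0}^n 2^m g(2^m)/G(2^n)$, the factor $\log 2$ entering exactly through the block integral $\int_{1/2}^1 u^{-1}\,du=\log 2$.

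The main obstacle is precisely this pointwise statement $g(\lambda t)/g(t)\to\lambda^{-1}$, i.e. that \eqref{limcond} upgrades to regular variation of index $-1$. For dyadic $\lambda=2^k$ it is immediate by telescoping \eqref{limcond}, but for general $\lambda$ monotonicity only sandwiches $g(\lambda t)/g(t)$ between consecutive dyadic ratios, and the delicate point is whether this sandwich genuinely closes down to $\lambda^{-1}$; this is where the full strength of the limit in \eqref{limcond} (rather than the mere bound \eqref{g}) must be exploited, presumably through a Karamata-type uniform convergence argument. I expect this regular-variation step to be the crux of the whole lemma: once it is in hand, \eqref{RVcor} also drops out of Karamata's theorem in a unified way, and \eqref{e0001} follows from the Stolz--Cesàro reduction above. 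I would therefore concentrate the effort on establishing the index $-1$ regular variation of $g$ from \eqref{limcond} and treat the two displayed limits as consequences.
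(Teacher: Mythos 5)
Your proof of \eqref{RVcor} is correct and complete, and it is more elementary than the paper's: the paper obtains \eqref{RVcor} by asserting that \eqref{limcond} makes $g$ regularly varying of index $-1$ and then invoking Karamata's theorem \cite[Theorem 1.5.11]{RegVar}, whereas your geometric-series argument uses nothing beyond the dyadic ratios and monotonicity. Your Stolz--Ces\`aro reduction of \eqref{e0001} to the per-block asymptotic $\int_{2^{n-1}}^{2^n} g(s)\,ds \sim (\log 2)\, 2^n g(2^n)$ is also sound.

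The gap is exactly the step you deferred, and it cannot be closed: \eqref{limcond} does \emph{not} upgrade to regular variation, and no Karamata-type uniform-convergence argument will make it do so, because the implication is false. Consider
\[
g(t)=1 \ \text{ for } t\in(0,1), \qquad g(t)=2^{-k/2}t^{-1/2} \ \text{ for } t\in[2^k,2^{k+1}),\ k\ge0 .
\]
This $g$ is bounded, positive, decreasing, vanishes at infinity, is not integrable, and satisfies $g(t)/g(2t)\equiv 2$ for $t\ge1$, so all hypotheses of the lemma (and condition \eqref{g}) hold. Yet for $u\in[1/2,1)$ one computes $g(2^nu)/g(2^n)=\sqrt{2}\,u^{-1/2}$, so your block integral converges to $\int_{1/2}^{1}\sqrt{2}\,u^{-1/2}\,du=2(\sqrt{2}-1)$, not $\log 2$. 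Concretely, $\sum_{m=0}^{n}2^mg(2^m)=n+1$ while $G(2^n)=1+2(\sqrt{2}-1)\,n$, so the ratio in \eqref{e0001} tends to $(\sqrt{2}+1)/2\neq\log 2$: the second conclusion of the lemma fails for this $g$. The underlying point is that for a monotone function, existence of the scaling limit at the single scale $\lambda=2$ (hence at all dyadic scales, by telescoping) does not force regular variation; the standard theorems need the limit for $\lambda$ in a set of positive measure, or at two scales with incommensurable logarithms. Your worry that the dyadic sandwich might not close down to $\lambda^{-1}$ is exactly right: it does not.

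Two remarks on the comparison with the paper. First, the paper's own proof commits precisely the leap you hesitated over --- its opening sentence declares that \eqref{limcond} ``means'' that $g$ is regularly varying of index $-1$ --- so your proposal has in effect isolated a genuine flaw in the paper: conclusion \eqref{e0001} requires the stronger hypothesis that $g(\lambda t)/g(t)\to\lambda^{-1}$ for every $\lambda>0$, while \eqref{RVcor}, as your first argument shows, does follow from the dyadic hypothesis alone. Second, under that stronger hypothesis your route goes through: the pointwise convergence holds by definition, your domination $g(2^nu)/g(2^n)\le g(2^{n-1})/g(2^n)=O(1)$ is valid, and dominated convergence plus Stolz--Ces\`aro yields \eqref{e0001} more cleanly than the paper's smoothing and Euler--Maclaurin computation.
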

\begin{proof}
The assumption $\lim_{t\to\infty} \frac{g(t)}{g(2t)}=2$ means that $g$ is a regularly varying function of index $-1$. Therefore, \eqref{RVcor} follows from \cite[Theorem 1.5.11]{RegVar}. To prove \eqref{e0001} we firstly note that since  $g$ is monotone, we have that $\sum_{m=0}^{n} g(m)\sim G(n), \ n\to\infty.$
	Further, direct computation yields $\int_0^n 2^s g(2^s) ds = \log 2 \cdot G(2^n)$. 

Secondly, for every regularly varying $g$ of index $-1$, there exists a continuously differentiable function $\tilde{g}$, such that $\tilde{g}$ is a regularly varying function of index $-1$, $g \sim \tilde{g}$ and
	$\frac{t \tilde{g}'(t)}{\tilde{g}(t)} \to -1$ as $t\to\infty$ \cite[Theorem 1.8.2]{RegVar}. Thus, without loss of generality, we can assume that $g$ is continuously differentiable and $\frac{t g'(t)}{g(t)} \to -1$ as $t\to\infty$.	

Setting $f(s)=2^s g(2^s)$, we obtain
$$\frac{f'(s)}{f(s)}= \log 2 \left(1+\frac{2^s g'(2^s)}{g(2^s)}\right).$$
Since $\frac{t g'(t)}{g(t)} \to -1$ as $t\to\infty$, then $f'(s) = o(f(s))$ as $s\to\infty$. In particular, the Euler–Maclaurin formula reads:
\begin{align*}
	\int_0^n f(s) ds - \sum_{m=0}^n f(m) &= \frac{f(n)-f(0)}{2} + \int_0^n f'(s) B_1(s-\lfloor s \rfloor)ds\\
	&= \frac{f(n)-f(0)}{2} + o\left(\int_0^n f(s)ds\right),
\end{align*}
where $B_1$ is the Bernoulli polynomial and, so, is bounded on $[0,1]$. Therefore,
	$$\left|\int_0^n 2^s g(2^s) ds - \sum_{m=0}^n 2^m g(2^m) \right| \le \max\{g(1), 2^n g(2^n)\}+ o\left(\int_0^n 2^s g(2^s)ds\right).$$
		
	Dividing this by $G(2^n)$ we obtain
	\begin{equation*}\label{eqG}
			\left|\log 2 - \frac{\sum_{m=0}^n 2^m g(2^m)}{G(2^n)} \right| \le \frac{\max\{g(1), 2^n g(2^n)\}}{G(2^n)} +o(1).
		\end{equation*}
Using ~\eqref{RVcor} and that $g\notin L_1(0,\infty)$, we conclude that
		\begin{equation*}
				\frac{\sum_{m=0}^n 2^m g(2^m)}{G(2^n)} \to \log 2,
			\end{equation*} as $n\to\infty$.
\end{proof}

To characterise Dixmier traces on $\Lw(H)$ we introduce weighted Ces\`aro (Riesz) operators $C_g : \ell_\infty \to \ell_\infty$ in the following way (see e.g.~\cite[Section 3.2]{Boos}):
\begin{equation}\label{riesz}
	(C_gx)_n=\frac{\sum_{m=0}^n 2^m g(2^m) x_m}{\sum_{m=0}^n 2^m g(2^m)}, \ x \in \ell_\infty, \ n\ge0.
\end{equation}

In the special case when $g(t)=1/t$ the operator $C_g$ is the classical Ces\`aro operator.
Clearly, $C_g \chi_{\Z_+} = \chi_{\Z_+}$.
Since $g$ is a positive function, it is easy to see that $C_gx\ge0$ if $x\ge0$. However, in general the method of summation defined by $C_g$ is not regular. 
Recall that a summation method is regular if it maps convergent sequences to convergent preserving the limit value.

\begin{proposition}\label{reg}
	If $g : [0,\infty) \to (0,\infty)$ is a bounded  decreasing function such that $g\notin L_1(0,\infty)$, then 
 $C_g$ defines a regular summation method. If additionally, $g$ satisfies~\eqref{limcond} at $\infty$, then the operator $C_g(I-S_+)$ maps $\ell_\infty$ into $c_0 := c_0(\Z_+)$.
\end{proposition}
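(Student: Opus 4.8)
The plan is to treat $C_g$ as an infinite matrix on $\ell_\infty$ and dispatch the two assertions in turn. Throughout write $w_m = 2^m g(2^m)$ and $W_n = \sum_{m=0}^n w_m$, so that the matrix entries of $C_g$ are $a_{nm} = w_m/W_n$ for $0\le m\le n$ and $a_{nm}=0$ for $m>n$. Since $g>0$ all entries are nonnegative and each row sums to $1$, whence $\sup_n\sum_m|a_{nm}|=1$ and $\sum_m a_{nm}=1$ for every $n$. Because $g\notin L_1(0,\infty)$, \cref{lem_l1} tells us that $\{w_m\}$ is not summable, i.e.\ $W_n\to\infty$; consequently, for each fixed $m$ the column entry $a_{nm}=w_m/W_n\to 0$ as $n\to\infty$. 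These are precisely the Silverman–Toeplitz conditions, so $C_g$ is a regular summation method. This settles the first assertion, with the non-integrability of $g$ entering only through $W_n\to\infty$.

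For the second assertion I would compute $C_g(I-S_+)x$ coordinatewise. Using $(S_+x)_0=0$ and $(S_+x)_m=x_{m-1}$ for $m\ge1$, one has $(C_gS_+x)_n = W_n^{-1}\sum_{m=0}^{n-1}w_{m+1}x_m$, and subtracting this from $(C_gx)_n=W_n^{-1}\sum_{m=0}^n w_m x_m$ gives
$$(C_g(I-S_+)x)_n = \frac{w_n x_n}{W_n} - \frac{1}{W_n}\sum_{m=0}^{n-1}(w_{m+1}-w_m)x_m .$$
Since both $C_g$ and $I-S_+$ preserve $\ell_\infty$, the sequence $C_g(I-S_+)x$ is automatically bounded, so it suffices to show that both terms on the right tend to $0$ as $n\to\infty$ for every $x\in\ell_\infty$; this places the image in $c_0$.

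For the boundary term I estimate $|w_n x_n/W_n|\le \|x\|_\infty\, w_n/W_n$ and invoke \cref{lem_g_prop}: \eqref{RVcor} gives $w_n/G(2^n)=2^n g(2^n)/G(2^n)\to 0$ while \eqref{e0001} gives $W_n/G(2^n)\to\log 2$, so $w_n/W_n\to 0$. For the averaged-difference term I would write $w_{m+1}-w_m = w_m\varepsilon_m$ with $\varepsilon_m = 2\,g(2^{m+1})/g(2^m)-1$, and observe that \eqref{limcond} at $\infty$ (equivalently $g(2^m)/g(2^{m+1})\to 2$) forces $\varepsilon_m\to 0$. Then
$$\left|\frac{1}{W_n}\sum_{m=0}^{n-1}(w_{m+1}-w_m)x_m\right| \le \|x\|_\infty\,\frac{\sum_{m=0}^{n-1} w_m|\varepsilon_m|}{W_n},$$
and a routine Toeplitz averaging argument — fix $\delta>0$, split the sum at an index $M$ beyond which $|\varepsilon_m|<\delta$, use $W_n\to\infty$ to annihilate the finite head while bounding the tail by $\delta$ — shows the right-hand side tends to $0$. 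Combining the two estimates yields $(C_g(I-S_+)x)_n\to 0$, as required.

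The decomposition and the averaged-difference estimate are routine once $\varepsilon_m\to0$ and $W_n\to\infty$ are in hand; the step that genuinely requires the structural hypotheses on $g$ is the vanishing of the boundary term $w_nx_n/W_n$, whose control rests on the regular-variation consequences of \cref{lem_g_prop} (both the smallness of $2^n g(2^n)$ relative to $G(2^n)$ and, again, the non-integrability of $g$ via \eqref{e0001}). I expect that boundary term to be the only delicate point.
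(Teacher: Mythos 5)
Your proof is correct and follows essentially the same route as the paper: for regularity, the Silverman--Toeplitz conditions with $\sum_{m=0}^n 2^m g(2^m)\to\infty$ supplied by \cref{lem_l1}; for the second claim, an Abel-summation decomposition of $C_g(I-S_+)x$ into a boundary term $2^n g(2^n)x_n/\sum_{m=0}^n 2^m g(2^m)$ and a $C_g$-average of the null sequence $\bigl\{x_m\bigl(1-2g(2^{m+1})/g(2^m)\bigr)\bigr\}_{m\ge0}$. The only cosmetic differences are that the paper disposes of the averaged term by citing the already-established regularity of $C_g$ instead of redoing the Toeplitz splitting argument by hand, and gets the boundary term to vanish by combining \eqref{RVcor} with \cref{lem_l1} rather than via \eqref{e0001}.
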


\begin{proof}
	By \cref{lem_l1}, we have $\sum_{m=0}^n 2^m g(2^m) \to \infty, n\to \infty$. The first assertion follows from~\cite[Theorem 3.2.7]{Boos}.
	
%
%
%
%
	
	Let $x\in \ell_\infty$ and $n\in \N$. By definition of $C_g$ we have
	\begin{align*}
		&(C_g(I-S_+)x)_n =\frac{g(1) x_0+\sum_{m=1}^n 2^m g(2^m) (x_m-x_{m-1})}{\sum_{m=0}^n 2^m g(2^m)}\\
		&=\frac{\sum_{m=0}^{n} x_m (2^m g(2^m)-2^{m+1} g(2^{m+1}))+2^{n+1} g(2^{n+1}) x_{n+1}}{\sum_{m=0}^n 2^m g(2^m)}\\
		&=\frac{\sum_{m=0}^{n} 2^m g(2^m) x_m (1-\frac{2 g(2^{m+1})}{g(2^m)})+2^{n+1} g(2^{n+1}) x_{n+1}}{\sum_{m=0}^n 2^m g(2^m)}\\
		&= (C_g y)_n + \frac{2^{n+1} g(2^{n+1}) x_{n+1}}{\sum_{m=0}^n 2^m g(2^m)},
	\end{align*}
	where $y:= \left\{ x_m (1-\frac{2 g(2^{m+1})}{g(2^m)})\right\}_{m=0}^\infty$. Since $\frac{g(t}{g(2t)}\to 2$ as $t\to\infty$, it follows that $y\in c_0$.  Since $C_g$ is regular, we infer that the first summand vanishes as $n\to\infty$.
	
Combining  \eqref{RVcor} with \cref{lem_l1}, we infer that 
	\begin{equation}\label{e001}
			\frac{2^{n} g(2^{n})}{\sum_{m=0}^n 2^m g(2^m)} \to 0,
		\end{equation} as $n\to\infty$. Hence, $C_g(I-S_+)x \in c_0,$ as required.
\end{proof}

\begin{remark}
	In general, the condition $g\notin L_1$ in the above proposition is essential. For instance, the function  $$g(t)=\frac1{(t+2)\log^2(t+2)}, \ t\ge0,$$ satisfies all assumptions of \cref{reg}, but $g\in L_1$. In this case, $2^m g(2^m)=O(m^{-2})$, and therefore, $\sum_{m=0}^n 2^m g(2^m) \nrightarrow \infty, n\to \infty$ and, thus, $C_g$ is not regular.
	
	For the same $g$ and the sequence $x=(1,0,0, \dots)$ a direct verification shows that the sequence $C_g(I-S_+)x$ does not belong to $c_0$.

	The assumption of regular variation (that is, $g$ satisfies \eqref{limcond}) is essential, too. Consider
	$$g = \chi_{(0,1)} + \sum_{i=0}^\infty \frac{2^{-2i}}i \chi_{[2^{2i}, 2^{2i+2})}.$$
	We have $g(2^{2i}) = g(2^{2i+1}) = 2^{-2i} / i$ and $2^m g(2^m) = 1/m$ or $2/m$ for even and odd $m$, respectively. Then,
	$$(C_g(I-S_+)x)_n =\frac{\sum_{m=0}^{n} x_m (-1)^m 1/m }{3 \sum_{m=0}^n 1/m} + o(1),$$
	does not vanish for $x = \{(-1)^m\}_{m\ge0}$.
\end{remark}

The following result is an extension of~\cite[Theorem 5.8]{SSUZ}, where it was proved for Dixmier traces on $\mathcal L_{1,\infty}(H)$.

\begin{theorem}\label{DixChar}
	Let $g : [0,\infty) \to (0,\infty)$ be a bounded decreasing function satisfying~\eqref{limcond}, such that $g\notin L_1(0,\infty)$. A symmetric functional $\phi$ on $\Lw(H)$ is a Dixmier trace if and only it can be written in the form
	\begin{equation}\label{eq001}
		\phi(X) = (\gamma \circ C_g)(\Phi_g(X)), \ 0\le X \in \Lw(H),
	\end{equation}
	for some extended limit $\gamma$ on $\ell_\infty$.
\end{theorem}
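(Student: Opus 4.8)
The plan is to reduce the statement, on the positive cone, to a fact about extended limits on $\ell_\infty:=\ell_\infty(\Z_+)$ linked by a dyadic subsampling operator. Set $\Sigma(N):=\sum_{k=0}^N\mu(k,X)$ and let $\Psi(X):=\{\Sigma(N)/G(N)\}_{N\ge0}$ be the (bounded) Dixmier ratio sequence, so that by \cref{Dix_L} a symmetric functional is a Dixmier trace exactly when it equals $\frac{1}{\log 2}\,\omega\circ\Psi$ on the positive cone for some extended limit $\omega$. First I would evaluate $C_g\Phi_g$ directly: since $2^m g(2^m)\,(\Phi_g X)_m=\int_{2^m}^{2^{m+1}}\mu(s,X)\,ds$ by \eqref{def_Phi_g}, the numerator in \eqref{riesz} telescopes, and in $B(H)$, where $\mu(s,X)=\mu(\lfloor s\rfloor,X)$, this gives
$$(C_g\Phi_g X)_n=\frac{\int_1^{2^{n+1}}\mu(s,X)\,ds}{\sum_{m=0}^n 2^m g(2^m)}=\frac{\Sigma(2^{n+1}-1)}{\sum_{m=0}^n 2^m g(2^m)}+o(1),$$
the $o(1)$ arising from the bounded quantity $\int_0^1\mu(s,X)\,ds$ over a denominator tending to $\infty$ by \cref{lem_l1}. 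Replacing this denominator by $\log 2\cdot G(2^n)$ via \eqref{e0001} and passing from $G(2^n)$ to $G(2^{n+1})$ and $G(2^{n+1}-1)$ (all ratios tend to $1$ since \eqref{RVcor} forces $G(2t)/G(t)\to1$), I would obtain the key identity
$$C_g\Phi_g X=\frac{1}{\log 2}\,D(\Psi X)+e_X,\qquad e_X\in c_0,$$
where $D$ is the dyadic sampling $(Da)_n:=a_{2^{n+1}-1}$.

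For the ``if'' part, fix an extended limit $\gamma$ and suppose $\phi(X)=\gamma(C_g\Phi_g X)$ on the positive cone. Since $D$ sends convergent sequences to convergent sequences with the same limit, $\omega:=\gamma\circ D$ is again an extended limit, while $\gamma$ vanishes on $c_0$ and hence on $e_X$. Thus $\phi(X)=\frac{1}{\log 2}\,\omega(\Psi X)$ for every $0\le X$, which is precisely the positive-cone formula of the Dixmier trace attached to $\omega$; as two symmetric functionals agreeing on the positive cone coincide, $\phi$ is that Dixmier trace.

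For the ``only if'' part I would first record the slow oscillation of the ratio. Writing $R(t):=\int_0^t\mu(s,X)\,ds/G(t)$ and using $\mu(\cdot,X)\le\nw{X}\,g$ together with $G(2t)/G(t)\to1$, a one-line estimate of $R(u)-R(t)$ yields $\sup_{t\le u\le 2t}|R(u)-R(t)|\to0$; since $\Psi(X)_N=R(N+1)+o(1)$, this makes $\Psi X$ slowly oscillating modulo $c_0$, i.e.\ $\sup_{N\le M\le 2N}|\Psi(X)_M-\Psi(X)_N|\to0$. Next I would introduce the block extension $\iota:\ell_\infty\to\ell_\infty$ defined by $(\iota b)_N:=b_n$ for $N$ in the dyadic block $[2^{n+1}-1,\,2^{n+2}-1)$; it preserves limits, so for the given Dixmier extended limit $\omega$ the functional $\gamma:=\omega\circ\iota$ is again an extended limit. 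Each block lies in $[N_n,2N_n]$ with $N_n=2^{n+1}-1$, so slow oscillation gives $a-\iota(Da)\in c_0$ for every slowly oscillating $a$, whence $\gamma(D\Psi X)=\omega(\iota D\Psi X)=\omega(\Psi X)$. Combining this with the key identity yields $\gamma(C_g\Phi_g X)=\frac{1}{\log 2}\,\omega(\Psi X)=\phi(X)$ on the positive cone, which is the desired representation \eqref{eq001}.

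I expect the main obstacle to be the slow-oscillation step together with the asymptotic bookkeeping behind the key identity: one must verify that the ratios of $G$ at neighbouring dyadic scales (and at $N$ versus $N+1$) all tend to $1$, which is exactly where the regular-variation hypothesis \eqref{limcond} enters, through \eqref{RVcor} and \eqref{e0001} of \cref{lem_g_prop}. Once the identity $C_g\Phi_g X=\frac{1}{\log 2}D(\Psi X)+c_0$ and the slow oscillation of $\Psi X$ are established, the remaining points — the telescoping, discarding the $c_0$ terms, and checking that $\gamma\circ D$ and $\omega\circ\iota$ are extended limits — are routine.
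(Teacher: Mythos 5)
Your argument is correct, and it reaches \cref{DixChar} by a recognizably different route than the paper. The paper runs both directions through its correspondence machinery: in the ``if'' direction it first shows that $\gamma\circ C_g$ is a continuous $S_+$-invariant functional (\cref{reg}), invokes \cref{one-to-one1} together with a dyadic $\ell_g$-representation to realise the right-hand side of \eqref{eq001} as a symmetric functional, and only then performs the asymptotic computation; in the ``only if'' direction it passes to the invariant functional $\theta=\phi\circ\cD_g$, replaces $\mu(\cD_g x)$ by $D_g x$ via the Figiel--Kalton estimate, identifies $\theta=\gamma_1\circ C_g$, and returns to \eqref{eq001} through \cref{inv1}(i). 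You bypass all of that by proving a single sequence-level identity, $C_g\Phi_g X=\frac1{\log 2}\,D(\Psi X)+c_0$ on the positive cone (legitimate here since in $B(H)$ one has $\mu(s,X)=\mu(\lfloor s\rfloor,X)$), and then handling the two implications by composing with the scale-change operators $D$ (dyadic subsampling) and $\iota$ (block extension). The underlying asymptotics --- \eqref{RVcor}, \eqref{e0001}, and $G(2t)/G(t)\to 1$ --- are the same ones the paper uses, and your $\gamma\circ D$ and $\omega\circ\iota$ are the same devices as the paper's two functionals $\gamma_1$ (namely $\gamma(k\mapsto x_{2^k})$ in the forward direction and $\gamma(n\mapsto x_{\lfloor\log_2 n\rfloor})$ in the backward one). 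Two remarks on what each approach buys. First, your explicit slow-oscillation estimate for $\Psi X$ is precisely the justification the paper leaves implicit when it equates $\gamma\bigl(n\mapsto \frac1{G(n)}\int_0^n\cdots\bigr)$ with $\gamma_1\bigl(n\mapsto \frac1{G(2^n)}\int_0^{2^n}\cdots\bigr)$, so on this point your write-up is more complete. Second, the paper's ``if'' direction proves more than the stated equivalence: it shows that for \emph{every} extended limit $\gamma$ the formula \eqref{eq001} genuinely extends to a continuous symmetric functional on all of $\Lw(H)$ --- a fact exploited later in \cref{Dix_cor} --- whereas you take the existence of the symmetric functional $\phi$ as part of the hypothesis, which the literal statement of the theorem permits but which makes your version slightly weaker as a tool for the subsequent results.
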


\begin{proof}
We firstly show that for any extended limit $\gamma$ on $\ell_\infty$, the formula \eqref{eq001} defines a Dixmier trace on  $\Lw(H)$.
For every extended limit $\gamma$ the functional $\gamma \circ C_g$ is continuous and linear on $\ell_\infty$. The $S_+$-invariance of this functional follows from \cref{reg}.
	By \cref{one-to-one1} the formula
	$$\phi(X)=(\gamma \circ C_g)\left(\{\frac{\tau(X_k)}{2^{k} g(2^k)}\}_{k\in\Z_+}\right), \quad X\in \mL_g$$
	defines a symmetric functional on $\mL_g(H)$ for every
	dyadic $\ell_g$-representation $X=\sum_{k\in\Z_+} X_k$. By \cite[Lemma 2.7]{LU} every positive operator $X\in \mL_g(H)$ admits a dyadic $\ell_g$-representation with 
	$X_k = X E^X_{\Delta_k}$, where $\Delta_k = (\mu(2^k, X), \mu(2^{k+1}, X)]$. 
	By \cite[Proposition 3.3.9 (ii)]{DPS} we have that 
	$\tau(X_k) = \int_{2^k}^{2^{k+1}} \mu(s,X) ds$
and, so definition of $\Phi_g$ (see \eqref{def_Phi_g}) implies that
	\begin{equation}\label{eq500}
		\phi(X)=(\gamma \circ C_g) (\Phi_g X), 0\le X\in\mL_g(H).
	\end{equation}
	 We shall show that this functional coincides with a Dixmier trace.
	
	Using the definition of the operators $C_g$ and $\Phi_g$, we obtain
	\begin{equation}\label{eq000}
		\phi(X) = \gamma\left(  n \mapsto \frac1{\sum_{m=0}^n 2^m g(2^m)} \int_{0}^{2^{n+1}} \mu(s,X) ds\right).
	\end{equation}
	
Referring to \eqref{e0001} and using the fact that $\gamma$ is extended limits, we obtain that 
	$$ \phi(X) = \frac1{\log 2} \gamma\left( n \mapsto  \frac1{G(2^n)} \int_{0}^{2^{n+1}} \mu(s,X) ds \right). $$
	
	By \cref{simple} and \eqref{RVcor} we obtain
	\begin{equation}\label{e002}
		\int_{2^{n}}^{2^{n+1}} \mu(s,X) ds = O(2^n g(2^n))=o(G(2^n)), \ n\to\infty.
	\end{equation} 
	
	Since every extended limit vanishes on $c_0$, it follows that
	\begin{align*}
		\phi(X) &= \frac1{\log 2} \gamma\left( n \mapsto  \frac1{G(2^n)} \int_{0}^{2^{n}} \mu(s,X) ds \right)\\
		&= \frac1{\log 2} \gamma_1 \left(  n \mapsto \frac1{G(n)} \int_{0}^{{n}} \mu(s,X) ds\right),
	\end{align*}
	where $\gamma_1(x):=\gamma( k \mapsto x_{2^k})$ is an extended limit. It follows from \cref{Dix_L} that $\phi$ is a Dixmier trace on $\Lw(H)$.
	
Conversely, let $\phi$ be a Dixmier trace on $\Lw(H)$. Thus, for some extended limit $\omega$ on $\ell_\infty$ we have
	$$\phi(X)=\frac1{\log 2} \gamma \left( n \mapsto \frac1{G(n)} \int_{0}^{{n}} \mu(s,X) ds \right), \quad 0\le X\in \Lw(H). $$
	
	By \cref{one-to-one1} the trace $\phi$ corresponds to the $S_+$-invariant functional $\theta$ on $\ell_\infty$ of the form
	$$\theta(x)=\phi(\cD_gx)=\frac1{\log 2} \gamma \left( n \mapsto \frac1{G(n)} \int_{0}^{{n}} \mu(s,\cD_gx) ds \right), \ x \in \ell_\infty.$$
	By \cite[Theorem 4.5]{LU} $\mu(\cD_gx) = \mu(D_gx)$ for every $x \in S(\Z_+)$. Further, it follows from \cite[Lemma 2.2]{FigielKalton}
	$$\left|\int_{0}^{t} \mu(s, D_gx) ds - \int_{0}^{t} (D_gx)(s) ds\right| \le 8 c\cdot  t g(t),$$
	for every $t>0$ and some constant $c>0$. By \eqref{RVcor} we have that 
	$$\frac1{G(n)}\left|\int_{0}^{n} \mu(s, D_gx) ds - \int_{0}^{n} (D_gx)(s)\right| \to 0, \ n \to \infty.$$

	Since $\gamma$ is an extended limit (at $+\infty$), we obtain
	\begin{align*}
		\theta(x)&=\frac1{\log 2} \gamma \left( n \mapsto \frac1{G(n)} \int_{0}^{{n}} (D_gx)(s) ds\right)\\
		&=\frac1{\log 2} \gamma_1 \left( n \mapsto \frac1{G(2^n)} \int_{0}^{{2^n}} (D_gx)(s) ds\right)
	\end{align*}
	where $\gamma_1(x):=\gamma(x_{\lfloor \log_2n \rfloor})$ is an extended limit. 
	
	Using the definition of operator $D_g$ and \eqref{e0001}, we obtain
	\begin{align*}
		\theta(x)&=\gamma_1 \left( n \mapsto \frac{\sum_{m=0}^n 2^m g(2^m) x_m}{\sum_{m=0}^n 2^m g(2^m)} \right),  \ x \in \ell_\infty.
	\end{align*}
%
\end{proof}

The following theorem shows that the mapping $i_{f,g}$ given by~\eqref{i} is not a bijection (in general) between the set of Dixmier traces on $\mathcal L_f(H)$ and that on $\Lw(H)$.

\begin{theorem}\label{Dix_cor}
	Let 
	$$f(t)= \begin{cases} 1, 0<t<1,\\ 1/t, t\ge1\end{cases} \text{and}  \
	g(t)= \begin{cases} 1/2, 0<t<2,\\ \frac1{t\log_2 t}, t\ge2\end{cases}.$$  
	The mapping $i_{g,f}$ given by~\eqref{i} maps the set of all Dixmier traces on $\mL_g(H)$ into the set of  all Dixmier traces on $\mL_f(H)$, but it is not surjective.
\end{theorem}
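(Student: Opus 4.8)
The plan is to transport the whole problem to the sequence side, where both families of Dixmier traces live on the single space $\ell_\infty=\ell_\infty(\Z_+)$. By \cref{one-to-one1}, continuous symmetric functionals on $\Lw(H)$ and on $\mathcal L_f(H)$ correspond bijectively to continuous $S_+$-invariant functionals on $\ell_\infty$ via $\phi\mapsto\phi\circ\cD_g$ and $\phi\mapsto\phi\circ\cD_f$. First I would check that, read through these correspondences, the map $i_{g,f}$ of \cref{bij} becomes the identity on $\ell_\infty$-functionals: if $\phi$ on $\Lw(H)$ corresponds to $\theta=\phi\circ\cD_g$, then $i_{g,f}(\phi)\circ\cD_f=\phi\circ\cD_g\circ\Phi_f\circ\cD_f=\phi\circ\cD_g=\theta$, the middle equality using $\Phi_f\circ\cD_f\equiv\mathrm{id}$ modulo the kernel of the $S_+$-invariant functional $\theta$ (\cref{inv1}(ii)). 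By \cref{DixChar} (combined with \cref{inv1}(ii) and \cref{reg}), the $\ell_\infty$-functionals attached to Dixmier traces are exactly $\gamma\circ C_g$ on $\Lw(H)$ and $\tilde\gamma\circ C_f$ on $\mathcal L_f(H)$, with $\gamma,\tilde\gamma$ ranging over extended limits. Thus the theorem reduces to the inclusion of functional classes
$$\{\gamma\circ C_g:\gamma\ \text{ext.\ limit}\}\subseteq\{\tilde\gamma\circ C_f:\tilde\gamma\ \text{ext.\ limit}\},$$
together with the assertion that this inclusion is proper.

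For the two functions at hand one computes $2^mf(2^m)=1$ and $2^mg(2^m)=1/m$ for $m\ge1$, so $C_f$ is the ordinary Ces\`aro operator while $C_g$ is, up to a $c_0$-perturbation, the logarithmic mean $L$ given by $(Lu)_n=H_n^{-1}\sum_{m=1}^n u_m/m$ with $H_n=\sum_{m=1}^n 1/m$. The heart of the inclusion is the identity $L\equiv L\circ C_f\pmod{c_0}$ on $\ell_\infty$: an Abel summation gives $\sum_{m=1}^n x_m/m=(C_fx)_n-x_0+\sum_{m=1}^n(C_fx)_m/m$, and dividing by $H_n\to\infty$ (using \cref{lem_l1}) yields $(Lx)_n=(L(C_fx))_n+o(1)$. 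Since every extended limit annihilates $c_0$ and $C_g\equiv L\pmod{c_0}$, I obtain $\gamma\circ C_g=\gamma\circ L=(\gamma\circ L)\circ C_f$. Because $L$ is a regular summation method (nonnegative weights, normalised row sums, vanishing columns), $\gamma\circ L$ is again an extended limit; hence $\gamma\circ C_g=\tilde\gamma\circ C_f$ with $\tilde\gamma=\gamma\circ L$. This proves the inclusion, i.e.\ that $i_{g,f}$ maps Dixmier traces on $\Lw(H)$ into Dixmier traces on $\mathcal L_f(H)$.

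To see that the inclusion is proper I would exhibit a single $x\in\ell_\infty$ on which the two classes cannot agree. I would take $v(t)=\sin\sqrt t$, which is bounded, satisfies $v'(t)\to0$, does not converge, yet has vanishing Ces\`aro mean $\tfrac1T\int_0^Tv\to0$ (a direct substitution $s=\sqrt t$). Setting $w_n=v(\log_2 n)$ and $x_n=(n+1)w_n-nw_{n-1}$, the increments $n(w_n-w_{n-1})$ stay bounded since $v'\to0$, so $x\in\ell_\infty$, while by construction $C_fx=w$. A log-scale change of variables turns the logarithmic mean of $w$ into the Ces\`aro mean of $v$, so $Lw\to0$, whereas $\limsup w=1$. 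Consequently every functional $\gamma\circ C_g=(\gamma\circ L)\circ C_f$ in the image takes the value $\gamma(Lw)=0$ on $x$, while choosing (via \cref{EL}(i)) an extended limit $\tilde\gamma$ with $\tilde\gamma(w)=\limsup w=1$ produces a Dixmier trace $\tilde\gamma\circ C_f$ on $\mathcal L_f(H)$ whose value on $x$ equals $1\ne0$. Hence $\tilde\gamma\circ C_f$ lies outside the image and $i_{g,f}$ is not surjective.

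The main obstacle is the non-surjectivity construction. The requirement that $x$ be bounded forces $w=C_fx$ to be slowly varying (increments $O(1/n)$), so a crude block oscillation is unavailable; one must produce a genuinely slowly varying $w$ that is logarithmically convergent but not convergent, with $\limsup w$ strictly above its logarithmic limit. The choice $w_n=\sin\sqrt{\log_2 n}$ is engineered so that the oscillation survives at the level of $\limsup$ but is flattened by the logarithmic average, and verifying these two asymptotics together with the boundedness of $x$ is the only computationally delicate point. The inclusion direction, by contrast, rests entirely on the robust identity $L\equiv L\circ C_f\pmod{c_0}$ and the regularity of $L$.
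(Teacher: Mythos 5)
Your proposal is correct, and it splits into two halves relative to the paper. The inclusion half is essentially the paper's own argument in different clothing: the paper introduces the logarithmic mean $M$ (your $L$, with $\log(n+1)$ in place of $H_n=\sum_{k\le n}1/k$), computes the inverse Ces\`aro operator $C^{-1}$ explicitly and checks $M\circ C^{-1}-M:\ell_\infty\to c_0$ in \eqref{CC} --- which is exactly your Abel-summation identity $L\equiv L\circ C_f \pmod{c_0}$ --- and then uses regularity (\cref{reg}) to conclude that $\omega\circ M$ is an extended limit, just as you argue for $\gamma\circ L$. The non-surjectivity half is genuinely different. The paper obtains a separating sequence by citing \cite[Lemma 32]{SUZ1} and \cite[Theorem 20]{SSU3}: every functional in the image class is a Ces\`aro-invariant Banach limit composed with $C\circ\Phi_f$, hence equals $1/2$ on $y=\sum_{n\ge1}\chi_{[2^{2n},2^{2n+1})}$, whereas $Cy$ oscillates between $1/3$ and $2/3$, so by \cref{EL}(i) some Dixmier trace on $\mL_f(H)$ takes a value other than $1/2$ on $\cD_f y$. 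You instead manufacture a self-contained witness $w_n=\sin\sqrt{\log_2 n}$, $x=C^{-1}w$: boundedness of $x$ follows from $v'\to 0$, the logarithmic mean $Lw$ genuinely converges to $0$ (substitution $s=\sqrt{t}$), while $\limsup w=1$, so all image functionals vanish on $x$ while some Dixmier trace on $\mL_f(H)$ equals $1$ there. Your construction avoids any input from the theory of invariant Banach limits (everything reduces to elementary calculus), at the cost of a few verifications you should write out: define $w_0$; justify the sum-versus-integral comparison $\sum_{m=1}^n w_m/m=\ln 2\int_0^{\log_2 n}v(u)\,du+O(1)$; and note that the sequence-side identities, established on positive sequences via \cref{inv1}(ii), extend to your non-positive $x$ by linearity. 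The paper's route buys brevity and a connection with known classes of invariant Banach limits; yours buys self-containedness and a slightly stronger separation, since the image functionals agree on $x$ by actual convergence of $Lw$ rather than by a rigidity theorem for Ces\`aro-invariant Banach limits.
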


\begin{remark}
	Note that for such function $f$ the ideal $\mathcal{L}_f(H)$ coincide with the weak trace-class ideal $\mathcal{L}_{1,\infty}(H)$ and the operator $C_f$ is nothing but the usual Ces\`aro operator.
\end{remark}

\begin{proof}
	Note, that both functions $f$ and $g$ satisfy the assertions of \cref{DixChar} and, so both spaces $\mL_f(H)$ and $\mL_g(H)$ admit Dixmier traces.
	
	First, we show that for every Dixmier trace $\phi$ on $\Lw(H)$ the functional $\phi_1:= i_{g,f}(\phi)$ is a Dixmier trace on $\mathcal{L}_f(H)$. Using~\eqref{i}, \cref{DixChar} and \cref{inv1}(ii), we obtain 
	\begin{equation}\label{eqtr}
		\phi_1=\phi \circ \cD_g \circ \Phi_f= \omega \circ C_g \circ \Phi_g \circ \cD_g \circ \Phi_f= \omega \circ C_g  \circ \Phi_f,
	\end{equation}
	where $\omega$ is an extended limit on $\ell_\infty$.
	
	We shall show that $\phi_1=\gamma \circ C  \circ \Phi_f$ for some extended limit $\gamma$ on $\ell_\infty$. 
	
	The definition of the Riesz means yields
	\begin{equation}\label{Cesaro_g}
		(C_gx)_n = \frac{\frac{x_0}2+ \sum_{k=1}^n \frac{x_k}k}{\sum_{k=1}^n \frac1k+1/2}, \ x\in \ell_\infty, \ n\ge0.
	\end{equation}

Setting
$$(Mx)_0 = x_0, \quad (Mx)_n = \frac{\sum_{k=1}^n \frac{x_k}k}{\log (n+1)}, \ n\ge1,  \ x \in \ell_\infty,$$	
we obtain  $M-C_g : \ell_\infty \to c_0$. 

	Direct calculations show that the inverse of the operator $C$ is given by 
	$$(C^{-1}x)_0=x_0, \ \text{and} \ (C^{-1}x)_n=(n+1)x_n-nx_{n-1}, \ n\ge1.$$
	
	Thus, for every $x\in \ell_\infty$ and $n\ge1$ we obtain
	\begin{equation}\label{CC}
		\begin{aligned}
			(M C^{-1}x)_n &= \frac{\sum_{k=1}^n \frac1k ((k+1)x_k-kx_{k-1})}{\log (n+1)}\\
			&= \frac{\sum_{k=1}^n (x_k-x_{k-1}+\frac{x_k}{k} )}{\log (n+1)}\\
			&= \frac{x_n+ \sum_{k=1}^n \frac{x_k}{k}}{\log (n+1)}\\
			&= (Mx)_n+\frac{x_n}{\log (n+1)}.
		\end{aligned}
	\end{equation}
%
%
%
Since $\omega$ is an extended limit, it follows that
	$$\phi_1= \omega \circ M \circ \Phi_f= \omega \circ M \circ C^{-1}\circ C\circ \Phi_f = \gamma\circ C\circ \Phi_f, $$
	for an extended limits $\gamma: = \omega \circ M \circ C^{-1}= \omega \circ M = \omega \circ C_g$ on $\ell_\infty.$
	Thus, $\phi_1$ is a Dixmier trace on $\mathcal L_f(H)$ by \cref{DixChar}.
	
	Next, we show that the mapping $i_{g,f}$ is not surjective. Let $\mathfrak D_1$ be the set of all Dixmier traces on $\mathcal L_f(H)$ and let 
	$\mathfrak D_2$ be the set of all traces on $\mathcal L_f(H)$ of the form~\eqref{eqtr}. 
	
	For every $\phi_2\in \mathfrak D_2$ we have that 
	$\phi_2(\cD_fx) = (\omega_2 \circ M)(\Phi_f \cD_fx)$ for some extended limit $\omega_2$ and every $x\in \ell_\infty$.
	It follows from \cref{inv1} (ii), ~\cref{reg} and the fact that extended limits vanish on $c_0$, that
	$\phi_2(\mathcal D_fx) = \omega_2 (M x)$.
	Similarly, for every $\phi\in \mathfrak D_1$ we have 
	\begin{equation}\label{eq300}
		\phi_1(\cD_fx) = (\omega_1 \circ C)(\Phi_f \cD_fx)= \omega_1 (C x)
	\end{equation}
	for some extended limit $\omega_1$.

	By~\cite[Lemma 32]{SUZ1} $M\circ C - M : \ell_\infty \to c_0$. This implies that for every extended limit $\omega$ on $\ell_\infty$ the functional $\omega\circ M$ is Ces\`aro invariant Banach limit. It follows from~\cite[Theorem 20]{SSU3} that every Ces\`aro invariant Banach limit equals $1/2$ on the sequence
	$$y=\sum_{n=1}^\infty \chi_{[2^{2n}, 2^{2n+1})}.$$ Hence,
	$\phi_2(D_fy) =\frac12$ for every $\phi_2\in \mathfrak D_2$.

	On the other hand, by \eqref{eq300} and \cref{EL} we obtain
	\begin{align*}
		\{\phi(\cD_fy) : \phi\in \mathfrak D_1 \} &= \{ \omega (C y) : \omega \ \text{is extended limit}\}\\
		&=[\liminf_{n\to \infty} (Cy)_n,\limsup_{n\to \infty} (Cy)_n].
	\end{align*}	
	Direct calculations show that
	$$\limsup_{n\to \infty} (Cy)_n=\limsup_{n\to \infty} (Cy)_{2^{2n+1}-1}=\frac23$$
	and
	$$\liminf_{n\to \infty} (Cy)_n=\liminf_{n\to \infty} (Cy)_{2^{2n}-1}=\frac13.$$
	
	Thus,
	\begin{align*}
		\{\phi(\cD_fy) : \phi \in \mathfrak D_1 \} = [\frac13, \frac23],
	\end{align*}
that is, there are traces from $\mathfrak D_1$ which do not equal to $1/2$ on $D_fy$. 
	This proves that $\mathfrak D_1\neq \mathfrak D_2$.
\end{proof}

\section{EXTENSION OF CONNES' TRACE FORMULA}\label{sec:app}

In this section we prove a variant of the Connes' trace theorem for continuous traces and a wide class of integral operators. 
In particular, this class contains log-polyhomogeneous pseudo-differential operators studied in \cite{Lesch_log_PDO}.

We denote by $C_c^\infty(\R^d)$ the space of smooth compactly supported functions. Recall that an operator $X:L_2(\R^d)\to L_2(\R^d)$ is said to be compactly supported if there exist $f,g\in C_c^\infty(\mathbb R^d)$ such that $M_fXM_g=X$. Here, $M_f$ and $M_g$ stand for the multiplication operators.


Let $\Delta=\sum_{j=1}^d\frac{\partial^2}{\partial x_j^2}$ be the Laplace operator on $\R^d$. For $q>0$ we denote by $\Lw^{q}(L_2(\R^d))$ the $q$-convexification on $\Lw(L_2(\R^d))$, that is $\Lw^{q}(L_2(\R^d))$ consists of all operators $X$ such that $|X|^q \in \Lw(L_2(\R^d))$. Also we will use the standard notation $\langle x \rangle := (1+x^2)^{1/2}$.

The following definitions were introduced in~\cite{GU}.

\begin{definition}
	\label{weaklymod}
	Let $g:[0,\infty)\to(0,\infty)$ satisfy~\eqref{limcond} and  be a decreasing function.
	
	(i) Let $V \in B(L_2(\R^d))$ be strictly positive.  We say that $X\in {B}(L_2(\R^d))$ is weakly $g$-modulated with respect to $V$ if there is $p\ge 1$ such that the densely defined operator $XV^{-1/p}$ extends to a bounded 
	the operator with $XV^{-1/p}\in \Lw^{(\frac{p}{p-1})}(L_2(\R^d))$. 
	
	(ii) We say that $X\in {B}(L_2(\R^d))$ is weakly $g$-Laplacian modulated if it is weakly $g$-modulated with respect to $V=g((1-\Delta)^{d/2})$. 
	
\end{definition}

\begin{remark}
	The operator $g((1-\Delta)^{d/2})$ is not in $\Lw(L_2(\R^d))$, it is not even a compact operator on $L_2(\R^d)$.
	However, it is locally in $\Lw(L_2(\R^d))$ in the sense that $f \cdot g((1-\Delta)^{d/2})$, $g((1-\Delta)^{d/2}) \cdot f \in \Lw(L_2(\R^d))$
	for every $f\in C_c^\infty(\R^d)$.
\end{remark}

We will introduce a stronger version of $g$-reasonable decay (see \cite[Definition 5.8]{GU}). Here, by $p_X$ we denote the $L_2$-symbol  of a Hilbert-Schmidt operator $X$ (see e.g. \cite[Theorem 1.5.4]{LMSZ_book}).
\begin{definition}
	\label{stphidec}
	Let $g:[0,\infty)\to(0,\infty)$ satisfy~\eqref{limcond} and   be a decreasing function. We say that $X\in \mathcal{L}_2(L_2(\R^d))$ has sharp $g$-reasonable decay if 
	$$\int_{\R^{2d}} \frac{|p_X(x,\xi)|}{\langle t-\langle\xi\rangle^d\rangle}\, dx d\xi =O(t g(t)), \quad\mbox{as $t\to \infty$}.$$
\end{definition}

\begin{remark}
	Since the function $\xi \mapsto \langle t-\langle\xi\rangle^d\rangle^{-1}$ belongs to $L_2(\R^d)$, if follows that $(x,\xi)\mapsto \frac{|p_X(x,\xi)|}{\langle t-\langle\xi\rangle^d\rangle}$ is integrable for every $L_2$-function $p_X$.
\end{remark}

The following result is Lemma 3.5 in~\cite{GU}.

\begin{lemma}
	\label{l_exp}
	Let $g:[0,\infty)\to(0,\infty)$ be a decreasing function satisfying~\eqref{limcond}. Let $V \in \Lw(L_2(\R^d))$ be strictly positive and 
	let $\{e_n\}_{n\in \N}$ be an eigenbasis for $V$ ordered so that $Ve_n = \mu(n,V)e_n$, $n\ge0$. If $X\in  {B}(L_2(\R^d))$ is weakly $g$-modulated with respect to $V$, then
	$X \in \Lw(L_2(\R^d))$ and
	\begin{equation}
		\label{lemma11210}
		\sum_{k=0}^n \lambda(k,\Re X) - \sum_{k=0}^n \langle(\Re X)e_k, e_k\rangle = O(n g(n)), \ n \to \infty.
	\end{equation}
	Here $\Re X= \frac{X^*+X}2$ denotes the real part of $X$.
\end{lemma}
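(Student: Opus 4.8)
The plan is to split the statement into the membership assertion $X\in\Lw(L_2(\R^d))$ and the asymptotic comparison of the eigenvalue sums with the diagonal sums, and to treat the two separately.

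For the membership I would exploit that $\Lw$ is ``simply generated'', so that it suffices to control $\mu(t,X)$ pointwise by $g$. Writing $Y:=XV^{-1/p}$ and $W:=V^{1/p}$, the weak $g$-modulation hypothesis says $Y\in\Lw^{(\frac{p}{p-1})}$, i.e. $\mu(t,Y)\le C g(t)^{(p-1)/p}$, while $V\in\Lw$ gives $\mu(t,W)=\mu(t,V)^{1/p}\le C' g(t)^{1/p}$. Since $X=YW$, the submultiplicativity $\mu(2t,YW)\le\mu(t,Y)\mu(t,W)$ of the generalised singular values yields $\mu(2t,X)\le C'' g(t)$, and condition~\eqref{g} (applied as $g(t/2)\le Cg(t)$) then gives $\mu(t,X)\le C''' g(t)$, that is $X\in\Lw(L_2(\R^d))$. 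Because $\mu(X^*)=\mu(X)$ and $\Lw$ is a linear quasi-normed space, $\Re X=\tfrac12(X+X^*)$ lies in $\Lw(L_2(\R^d))$ as well.

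For the asymptotic comparison I set $T:=\Re X$ (self-adjoint, compact, in $\Lw$) and let $P_n$ be the spectral projection of $V$ onto $\mathrm{span}\{e_0,\dots,e_n\}$, so that $\sum_{k=0}^n\langle Te_k,e_k\rangle=\tr(TP_n)=\tr(P_nTP_n)$. Letting $Q_n$ be the spectral projection of $T$ onto its $n+1$ largest eigenvalues, Ky Fan's maximum principle gives $\sum_{k=0}^n\lambda(k,T)=\tr(TQ_n)=\max_{\mathrm{rank}\,Q=n+1}\tr(TQ)\ge\tr(TP_n)$, so the defect $D_n:=\sum_{k=0}^n\lambda(k,T)-\sum_{k=0}^n\langle Te_k,e_k\rangle$ is nonnegative and equals $\tr\bigl(T(Q_n-P_n)\bigr)$. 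By Cauchy interlacing this defect coincides with $\sum_{k=0}^n\bigl[\lambda(k,T)-\lambda(k,P_nTP_n)\bigr]$, i.e. with the amount by which the eigenvalues of $T$ are shifted upon pinching with respect to $\{P_n,\mathbf 1-P_n\}$; the off-diagonal block $(\mathbf 1-P_n)TP_n$ is what drives this shift.

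The crux, and the step I expect to be the main obstacle, is to bound $D_n$ by $O(ng(n))$. Here I would exploit the $V$-spectral decay $\|V^{1/p}(\mathbf 1-P_n)\|_\infty=\mu(n+1,V)^{1/p}\le C g(n)^{1/p}$ (valid since $P_n$ commutes with $V$) together with the bound on $Y=XV^{-1/p}$: writing $X=YW$ and $X^*=WY^*$ and estimating the off-diagonal coupling in a Hilbert--Schmidt norm weighted by the $V$-eigenvalues, one controls $\tr\bigl(T(Q_n-P_n)\bigr)$ by the rank $\le n+1$ times the spectral scale $g(n)$ at level $n$. The genuine difficulty is that weak $g$-modulation is a one-sided condition (only $XV^{-1/p}$ is assumed bounded, not $V^{-1/p}X$), so the two terms $YW$ and $WY^*$ in $T=\tfrac12(YW+WY^*)$ must be handled asymmetrically; it is precisely the symmetrization into $\Re X$ together with the maximality in Ky Fan that lets the less controlled side be absorbed, and carefully tracking the constants there is what yields the sharp rate $O(ng(n))$ rather than a weaker bound. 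This is exactly the content of the classical theory of $V$-modulated operators, specialised to the case $\mu(n,V)\asymp g(n)$.
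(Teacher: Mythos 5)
First, a point of reference: the paper itself gives no proof of \cref{l_exp} --- it is imported verbatim as \cite[Lemma 3.5]{GU}, with only a remark that the remainder $o\left(\int_0^n g(s)\,ds\right)$ stated there can be sharpened to $O(ng(n))$ by inspecting the proof of \cite[Lemma 3.4]{GU}. So your argument has to stand on its own, and only its first half does. The membership claim $X\in\Lw(L_2(\R^d))$ is handled correctly and completely: factoring $X=(XV^{-1/p})\cdot V^{1/p}$, using $\mu(t,XV^{-1/p})\le Cg(t)^{(p-1)/p}$, $\mu(t,V^{1/p})=\mu(t,V)^{1/p}\le Cg(t)^{1/p}$, the submultiplicativity $\mu(2t,AB)\le\mu(t,A)\mu(t,B)$, and the doubling condition \eqref{g} is exactly the right elementary argument, and $\Re X\in\Lw(L_2(\R^d))$ then follows by linearity.

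The comparison \eqref{lemma11210}, which is the actual content of the lemma, is not proved, for two reasons. (a) The step you yourself call the crux --- bounding the defect $D_n$ by $O(ng(n))$ --- is only described, never carried out: ``estimating the off-diagonal coupling in a Hilbert--Schmidt norm weighted by the $V$-eigenvalues'' is a heuristic, and the closing appeal to ``the classical theory of $V$-modulated operators'' is circular, since \eqref{lemma11210} \emph{is} that theory (in \cite{LSZ} this comparison is Lemma 11.2.10, whose proof needs a chain of block estimates: trace-norm control of $(1-P_n)T(1-P_n)$, Hilbert--Schmidt control of the off-diagonal block, and an abstract eigenvalue-perturbation lemma, none of which appear in your sketch). (b) The variational skeleton the plan rests on fails under the conventions in force. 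In this paper, as in \cite{LSZ} and \cite{GU}, $\lambda(k,\Re X)$ denotes an eigenvalue sequence ordered by \emph{decreasing modulus} --- this is what the application in Section 5 requires, where traces are evaluated on dyadic blocks of exactly this sequence --- and $\Re X$ need not be positive. With that ordering, $\sum_{k=0}^n\lambda(k,T)$ is \emph{not} $\max\{\tr(TQ):\mathrm{rank}\,Q=n+1\}$: if $T$ has eigenvalues $1,-1,\tfrac12$, the modulus-ordered partial sum at $n=1$ is $0$ while the maximum is $\tfrac32$. Hence the claimed nonnegativity of $D_n$ and the identification of $D_n$ with $\tr(T(Q_n-P_n))$ for the Ky Fan maximiser $Q_n$ both collapse, and with them the mechanism by which the ``less controlled side'' of $T=\tfrac12(YW+WY^*)$ was supposed to be absorbed. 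Any correct proof must handle signed eigenvalues ordered by modulus, which is precisely why \cite{GU} and \cite{LSZ} proceed through block-decomposition estimates rather than a variational principle.
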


\begin{remark}
	The remainder in the statement of Lemma 3.5 in the paper~\cite{GU} is $o\left(\int_0^n g(s)ds\right).$ However, as one can see from the proof of \cite[Lemma 3.4]{GU}, it is possible to sharpen this remainder to be $O(n g(n))$. 
\end{remark}

Let $\mathbb{T}^d$ be the $d$-torus equipped with its flat metric and let $\Delta_{\mathbb{T}^d}$ be the associated Laplacian. 
Denote by $\mathrm{e}_\mathbf{k}(u):=\mathrm{e}^{2\pi i\langle \mathbf{k},u\rangle}$, $\mathbf{k}\in \Z^d$  the eigenbasis basis for $\Delta_{\mathbb{T}^d}$.

\begin{lemma}\label{comp_supp}
	Let $g:[0,\infty)\to(0,\infty)$ be a decreasing function satisfying~\eqref{limcond} and let $X \in  {B}(L_2(\R^d))$ be compactly supported in $(0,1)^d$ operator with strict $g$-reasonable decay. One has
	$$\sum_{\langle \mathbf{k}\rangle^d\leq t}\langle X\mathrm{e}_\mathbf{k},\mathrm{e}_\mathbf{k}\rangle_{L_2((0,1)^d)}-\int_{\langle\xi\rangle^d\leq t}\int_{\R^d} p_X(x,\xi) dx d\xi = O(t g(t)), \quad\mbox{as $t\to \infty$}.$$
\end{lemma}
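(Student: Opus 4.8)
The plan is to push everything through the $L_2$-symbol $p_X$ and to split the assertion into two independent comparisons. Writing $\sigma(\xi):=\int_{\R^d}p_X(x,\xi)\,dx$, so that the integral in the statement is $\int_{\langle\xi\rangle^d\le t}\sigma(\xi)\,d\xi$, I would (a) replace each diagonal matrix element $\langle X\mathrm{e}_\mathbf{k},\mathrm{e}_\mathbf{k}\rangle_{L_2((0,1)^d)}$ by the pointwise symbol integral $\sigma(\mathbf{k})$, and then (b) replace the lattice sum $\sum_{\langle\mathbf{k}\rangle^d\le t}\sigma(\mathbf{k})$ by the integral $\int_{\langle\xi\rangle^d\le t}\sigma(\xi)\,d\xi$. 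Both comparisons produce errors that I will show to be $O(tg(t))$, the decisive one being concentrated on a shell of width $O(1)$ about the sphere $\langle\xi\rangle^d=t$. Since $X$ has sharp $g$-reasonable decay in the sense of \cref{stphidec}, it is in particular Hilbert--Schmidt, so $p_X$ is available.

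For the first comparison I would use the compact support in $(0,1)^d$: writing $X=M_fXM_g$ with $f,g\in C_c^\infty(\R^d)$ identically $1$ on a neighbourhood of the support, the localised exponential $g\,\mathrm{e}_\mathbf{k}$ lies in $L_2(\R^d)$ with $\widehat{g\,\mathrm{e}_\mathbf{k}}(\xi)=\hat g(\xi-\mathbf{k})$. The definition of the $L_2$-symbol then gives, after the substitution $\xi=\mathbf{k}+\eta$,
\begin{equation*}
  \langle X\mathrm{e}_\mathbf{k},\mathrm{e}_\mathbf{k}\rangle_{L_2((0,1)^d)}
  =\int_{\R^d}\int_{\R^d} f(x)\,p_X(x,\mathbf{k}+\eta)\,\hat g(\eta)\,\mathrm{e}^{2\pi i x\eta}\,d\eta\,dx .
\end{equation*}
Because $\int_{\R^d}\hat g(\eta)\mathrm{e}^{2\pi i x\eta}\,d\eta=g(x)$ and $fg\equiv1$ on the support of $p_X(\cdot,\xi)$, the $\eta$-independent part of the integrand reproduces $\sigma(\mathbf{k})$ exactly; the residual \emph{symbol-smoothing error} is the integral of $p_X(x,\mathbf{k}+\eta)-p_X(x,\mathbf{k})$ against the rapidly decaying weight $\hat g(\eta)\mathrm{e}^{2\pi i x\eta}$, which localises $\eta$ near the origin. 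Crucially, I would not try to bound this term pointwise (as $p_X$ is merely $L_2$ in $\xi$), but only after summation over the lattice: the $\hat g$-convolution moves the symbol's mass across the cutoff $\langle\xi\rangle^d=t$ by at most $O(1)$, so the summed smoothing error is dominated by the mass of $|p_X|$ in an $O(1)$-shell around that sphere, while the interior contributions cancel since the diagonal part is exact.

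The second comparison is a Riemann-sum-versus-integral estimate for $\sigma$. Over the interior of $\{\langle\xi\rangle^d\le t\}$ the lattice sum and the integral differ only by a telescoping discretisation error across unit cells, of lower order; the genuine discrepancy again lives in the boundary shell, where a lattice point may be counted while its unit cell protrudes past the sphere, or conversely. I would organise both errors by a layer-cake or summation-by-parts argument that attaches every contribution to the region $|\,t-\langle\xi\rangle^d\,|=O(1)$.

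The main obstacle, and the step where the hypothesis is used sharply, is exactly this boundary shell. Here \cref{stphidec} is tailor-made: the weight $\langle t-\langle\xi\rangle^d\rangle^{-1}$ is of order $1$ precisely on the unit shell $|\,t-\langle\xi\rangle^d\,|\lesssim1$ and decays away from it, so the bound $\int_{\R^{2d}}|p_X(x,\xi)|\,\langle t-\langle\xi\rangle^d\rangle^{-1}\,dx\,d\xi=O(tg(t))$ is an estimate for the mass of $p_X$ in an $O(1)$-neighbourhood of the boundary sphere. Feeding this into the summed symbol-smoothing error and into the lattice-versus-integral boundary term shows that each is $O(tg(t))$, while the interior discretisation error is of lower order. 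Finally, since $g$ satisfies \eqref{limcond} it is regularly varying of index $-1$, so $tg(t)$ is slowly varying and the various $O(1)$-shifts in the argument of $t$ or $g$ incurred along the way do not alter the order $O(tg(t))$; this completes the proof.
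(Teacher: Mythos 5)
Your endgame is the same as the paper's: everything is reduced to a comparison inequality of the form
\begin{equation*}
\Bigl|\sum_{\langle \mathbf{k}\rangle^d\leq t}\langle X\mathrm{e}_\mathbf{k},\mathrm{e}_\mathbf{k}\rangle_{L_2((0,1)^d)}-\int_{\langle\xi\rangle^d\leq t}\int_{\R^d} p_X(x,\xi)\, dx\, d\xi\Bigr|
\le C \int_{\R^{2d}} \frac{|p_X(x,\xi)|}{\langle t-\langle\xi\rangle^d\rangle}\, dx\, d\xi,
\end{equation*}
after which \cref{stphidec} gives $O(tg(t))$, with regular variation of $g$ absorbing the $O(1)$ shifts. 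The paper, however, obtains this inequality by citation (``in the way identical to \cite[Lemma 11.4.6]{LSZ}''), whereas you attempt to prove it, and it is in that attempt that the argument has a genuine hole.

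Two concrete problems. First, your step (a) estimates an error that is actually zero: for an operator compactly supported in $(0,1)^d$, Fubini applied to the kernel $K$ (supported in $(0,1)^d\times(0,1)^d$) gives the \emph{exact} identity $\langle X\mathrm{e}_\mathbf{k},\mathrm{e}_\mathbf{k}\rangle_{L_2((0,1)^d)}=\int_{\R^d}p_X(x,\mathbf{k})\,dx$; moreover $p_X(x,\cdot)$ is continuous (it is the Fourier transform of the compactly supported $L_1$ function $y\mapsto K(x,y)$), so the pointwise evaluation you worried about is harmless. Your proposed control of the ``symbol-smoothing error'' (``the interior contributions cancel since the diagonal part is exact'') is circular and indicates the mechanism was not identified. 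Second, and decisively, step (b) asserts that the interior lattice-sum-versus-integral discrepancy is ``a telescoping discretisation error of lower order''. That is exactly the hard part, and for a general Hilbert--Schmidt symbol it is false: the region $\{\langle\xi\rangle^d\le t\}$ contains $\sim t$ unit cells, and without quantitative control of the oscillation of $\sigma(\xi)=\int_{\R^d}p_X(x,\xi)\,dx$ inside each cell the aggregate interior error can be of order $t$, which swamps $O(tg(t))$ since $g(t)\to0$. A shell-mass bound --- which is all you extract from \cref{stphidec} --- cannot control errors committed deep inside the region. What makes the inequality true is the compact support of $X$: $\sigma$ is the Fourier transform of a function supported in a fixed compact set (the kernel in the variable $x-y$), and it is this Paley--Wiener rigidity, exploited in the proof of \cite[Lemma 11.4.6]{LSZ}, that converts \emph{all} discretisation errors into the weighted integral $\int_{\R^{2d}}|p_X(x,\xi)|\langle t-\langle\xi\rangle^d\rangle^{-1}dx\,d\xi$, the weight being needed on all of phase space and not merely on the unit shell --- which is precisely why \cref{stphidec} is formulated as a global weighted bound rather than a shell estimate. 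Without this Fourier-analytic input, your Riemann-sum plan does not close.
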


\begin{proof}
In the way identical to~\cite[Lemma 11.4.6]{LSZ} it can be shown that

$$\left|\sum_{\langle \mathbf{k}\rangle^d\leq t}\langle X\mathrm{e}_\mathbf{k},\mathrm{e}_\mathbf{k}\rangle_{L_2((0,1)^d)}-\int_{\langle\xi\rangle^d\leq t}\int_{\R^d} p_X(x,\xi) dx d\xi\right| \le C \cdot \int_{\R^{2d}} \frac{|p_X(x,\xi)|}{\langle t-\langle\xi\rangle^d\rangle}\, dx d\xi,$$
for some constant $C>0$.
The assertion follows from the definition of strict $g$-reasonable decay.
\end{proof}

The following result is an extension of Connes' trace formula for continuous traces on principal ideals $\Lw(L_2(\R^d))$.

\begin{theorem}
	Let $g$ satisfy~\eqref{limcond} and let $X$ be a compactly supported, weakly $g$-Laplacian modulated operator with strict $g$-reasonable decay. Then $X\in \Lw(L_2(\R^d))$ and 
	
	(i) for every continuous trace $\phi$ on $\Lw(L_2(\R^d))$ we have
	$${\phi}(X) = \theta \left(n\mapsto \frac1{2^n g(2^n)} \int_{\R^d} \int_{2^n \le \langle \xi\rangle^d\le 2^{n+1}} p_X(x,\xi)\, d\xi dx\right),$$
	where $\theta$ is a continuous $S_+$-invariant functional on $\ell_\infty$;
	
	(ii) All continuous normalised singular traces on $\Lw(L_2(\R^d))$ take the same value on the operator $X$ if and only if a sequence
	$$\left\{\frac1{2^n g(2^n)} \int_{\R^d} \int_{2^n \le \langle \xi\rangle^d\le 2^{n+1}} p_X(x,\xi)\, d\xi  dx\right\}_{n\ge0}$$
	is almost convergent (that is all Banach limits take the same value).
	
\end{theorem}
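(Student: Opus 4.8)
First I would reduce the global problem on $\R^d$ to the torus: since $X$ is compactly supported in $(0,1)^d$, it may be viewed as an operator on $L_2(\mathbb{T}^d)$, where the modulating operator $V=g((1-\Delta_{\mathbb{T}^d})^{d/2})$ is compact with $\mu(n,V)\asymp g(n)$ by Weyl's law, so that $V\in\Lw$. Applying \cref{l_exp} on the torus with this $V$ then yields $X\in\Lw(L_2(\R^d))$ together with the comparison of the eigenvalue partial sums and the diagonal partial sums of $\Re X$ with respect to the Fourier eigenbasis $\{e_{\mathbf{k}}\}$. For part (i), since both sides are $\C$-linear in $X$, it suffices to treat self-adjoint $X$, applying \cref{l_exp} separately to $\Re X$ and to $\Im X=\Re(-iX)$, both of which are again weakly $g$-modulated. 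I would fix a continuous trace $\phi$ and, via \cref{one-to-one1}, pass to the corresponding continuous $S_+$-invariant functional $\theta=\phi\circ\cD_g$ on $\ell_\infty$; the whole task then becomes identifying the sequence on which $\theta$ is to be evaluated.

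The core is a chain of approximations in which each step alters the test sequence only by an element of the kernel of \emph{every} continuous $S_+$-invariant functional, namely $\Ra + c_0(\Z_+)$ (such a functional annihilates $\Ra$ by invariance and $c_0(\Z_+)$ by \cref{supp_vanish}). Writing $T(t)=\int_{\langle\xi\rangle^d\le t}\int_{\R^d}p_X$, the target sequence is $s_n=\bigl(T(2^{n+1})-T(2^n)\bigr)/\bigl(2^n g(2^n)\bigr)$. \cref{comp_supp} replaces $T(t)$ by the diagonal cumulant $\sum_{\langle\mathbf{k}\rangle^d\le t}\langle Xe_{\mathbf{k}},e_{\mathbf{k}}\rangle$ up to an error $O(tg(t))$, and \cref{l_exp} replaces the latter by the eigenvalue cumulant up to a further $O(tg(t))$ error. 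By \cref{simple} and \cref{aconv1} every such $O(tg(t))$ cumulant produces, after forming the normalised dyadic increments, a sequence lying in $\Ra+c_0(\Z_+)$, hence annihilated by $\theta$. Finally the eigenvalue cumulant of the self-adjoint $X$ feeds into $\phi$ through \cref{one-to-one1} and \cref{inv1}(i) exactly as $\Phi_g$ does for positive operators (splitting $X=X_+-X_-$, so that $\phi(X_\pm)=\theta(\Phi_g X_\pm)$), yielding $\phi(X)=\theta(s)$.

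For part (ii) I would invoke \cref{NCT}: all continuous normalised traces agree on $X$ if and only if all positive normalised ones do. Under the correspondence of \cref{one-to-one1} composed with \cref{the map of anger}, the positive normalised singular traces on $\Lw(L_2(\R^d))$ correspond bijectively to the positive normalised $S_+$-invariant functionals on $\ell_\infty$ vanishing on $c_0(\Z_+)$, that is, precisely to the Banach limits. By part (i) each such trace satisfies $\phi(X)=B(s)$ for the associated Banach limit $B$, and as $B$ ranges over all Banach limits this exhausts the set $\{B(s)\}$. Hence all positive normalised traces coincide on $X$ exactly when all Banach limits agree on $s$, which is the definition of almost convergence of $s$; together with \cref{NCT} this gives (ii).

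The delicate point, and the step I expect to be the main obstacle, is the bookkeeping of the error terms. \cref{comp_supp} and \cref{l_exp} supply only $O(tg(t))$, and after dividing the dyadic increments by $2^n g(2^n)$ these errors are merely $O(1)$, hence \emph{not} individually negligible; the entire argument rests on recognising them as cumulants of integrable densities and invoking \cref{aconv1} to place the resulting block sequences in $\Ra+c_0(\Z_+)$. A secondary difficulty is the Weyl-law mismatch between the counting index $n$ and the spectral scale $\langle\xi\rangle^d=t$ entering the symbol cutoff, i.e. $\#\{\mathbf{k}:\langle\mathbf{k}\rangle^d\le t\}\asymp t$ rather than $=t$; the induced shift of the dyadic blocks must be absorbed using the $S_+$-invariance of $\theta$ together with the regular variation \eqref{limcond}, once more modulo $\Ra+c_0(\Z_+)$.
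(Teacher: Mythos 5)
Your proposal matches the paper's proof essentially step for step: the reduction to the torus, the combination of \cref{l_exp} and \cref{comp_supp} to compare eigenvalue sums with symbol integrals up to $O(tg(t))$ errors, the use of \cref{aconv1} to place the normalised dyadic increments in $\Ra+c_0(\Z_+)$ where every continuous $S_+$-invariant functional vanishes, the correspondence of \cref{one-to-one1} (with the real/imaginary and positive/negative splittings) for part (i), and \cref{NCT} together with Banach limits and Lorentz's almost convergence for part (ii). The ``delicate points'' you flag are treated in the paper exactly as you anticipate --- the error bookkeeping via \cref{aconv1}, and the transfer to the torus and the Weyl-law normalisation via citations to \cite{GU} --- so there is no substantive difference between the two arguments.
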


\begin{proof} The fact that $X\in \Lw(L_2(\R^d))$ was established in~\cite[Lemma 3.4]{GU}.
	
	(i) Without loss of generality, we can assume that $X$ is compactly supported in $(0,1)^d$. Thus, \cite[Lemma 6]{GU} implies that $X$ is weakly $g$-modulated with respect to $g((1-\Delta_{\mathbb{T}^d})^{d/2})$.
	Combining \cref{l_exp} and~\cref{comp_supp}, we obtain
	$$\sum_{k=0}^n [\lambda(k,\Re X)+i \lambda(k,\Im X)]-\int_{\langle\xi\rangle^d\leq n}\int_{\R^d} p_X(x,\xi) dx d\xi = O(n g(n)), t\to \infty. $$
	
	Thus,
	$$\sum_{k=0}^{2^{n+1}-2} \left[\lambda(k,\Re X)+i \lambda(k,\Im X)-\int_{ k-1\leq \langle\xi\rangle^d\leq k}\int_{\R^d} p_X(x,\xi) dx d\xi\right]$$
	$$ = O((2^{n+1}-2)g(2^{n+1}-2))= O(2^{n}g(2^{n})), n\to \infty, $$
	since $g$ satisfies condition~\eqref{limcond}.
	
	It follows from \cref{aconv1} that the sequence
	$$\left\{ \frac{1}{2^n g(2^n)}\sum_{k=2^n-1}^{2^{n+1}-2} \left[\lambda(k,\Re X)+i \lambda(k,\Im X)-\int_{ k-1\leq \langle\xi\rangle^d\leq k}\int_{\R^d} p_A(x,\xi) dx d\xi\right] \right\}_{n \ge 0}$$
	belongs to $\Ra +c_0.$
	Equivalently, the sequence
	$$
	\frac{1}{2^n g(2^n)}\sum_{k=2^n-1}^{2^{n+1}-2} [\lambda(k,\Re X)+i \lambda(k,\Im X)]-\frac{1}{2^n g(2^n)}\int_{ 2^n\leq \langle\xi\rangle^d\leq 2^{n+1}}\int_{\R^d} p_A(x,\xi) dx d\xi,
	$$
	$n \ge 0 $,
	belongs to $\Ra +c_0.$ By the linearity of traces and \cref{one-to-one1}, for every continuous trace $\phi$ on $\Lw(L_2(\R^d))$ we have 
	$$\phi(X) = \phi(\Re X)+i \phi(\Im X)= \theta \left( n\mapsto \frac{1}{2^n g(2^n)}\sum_{k=2^n-1}^{2^{n+1}-2} [\lambda(k,\Re X)+i \lambda(k,\Im X)]\right),$$
	where $\theta$ is a continuous $S_+$-invariant functional on $\ell_\infty$.
	Since every continuous $S_+$-invariant functional on $\ell_\infty$ vanishes on $\Ra +c_0$, it follows that
	\begin{equation*}
		\phi(X) =\theta \left(n\mapsto \frac{1}{2^n g(2^n)}\int_{ 2^n\leq \langle\xi\rangle^d\leq 2^{n+1}}\int_{\R^d} p_X(x,\xi) dx  d\xi\right),
	\end{equation*}
	as required.
	
	(ii) By \cref{NCT} all continuous normalised singular traces on $\Lw(L_2(\R^d))$ take the same value on the operator $X$ if and only if all positive normalised singular traces on $\Lw(L_2(\R^d))$ coincide on $X$. By part (i), this is the case if and only if there exists $c\in \R^d$ such that
	$$ \theta \left(n\mapsto \frac{1}{2^n g(2^n)}\int_{ 2^n\leq \langle\xi\rangle^d\leq 2^{n+1}}\int_{\R^d} p_X(x,\xi) dx d\xi\right)=c$$
	for every positive normalised $S_+$-invariant functional $\theta$ on $\ell_\infty$, that is for every Banach limit. Since Banach limits take the same value on almost convergent sequences only~\cite{L}, it follows that
	the sequence
	$$\left\{\frac1{2^n g(2^n)}  \int_{2^n \le \langle \xi\rangle^d\le 2^{n+1}} \int_{\R^d}p_X(x,\xi)\,  dx d\xi\right\}_{n\ge0}$$
	is almost convergent.
\end{proof}



\end{document}